\newcommand{\Z}{\mathbb Z}
\newcommand{\R}{\mathbb R}
\def\QED{\hfill $\; \Box$\medskip}
\def\qed{\QED}
\def\R{\mathbb R}
\def\Z{\mathbb Z}
\newcommand{\be}{\begin{equation}}
\newcommand{\ee}{\end{equation}}
\def\1{{\bf 1}}
\def\rife#1{(\ref{#1})}
\def\m{\noalign{\medskip}}
\def\vfi{\varphi}
\def\parint{\int_0^T\!\!\into}
\def\into{\int_{\Omega}}
\def\dive{{\rm div}}
\def\de{\delta}
\def\vep{\varepsilon}
\def\elle#1{L^{#1}(\Omega)}
\def\parelle#1{L^{#1}(Q_T)}
\def\limitate#1{L^\infty(0,T; \elle{#1})}
\def\continue#1{C^0([0,T]; \elle#1)}
\def\me{m^\vep}
\def\tm{\tilde m}
\def\tu{\tilde u}
\def\tmd{\tilde m_\de}
\def\md{m_\de}
\def\tb{\tilde b}
\def\tw{\tilde w}
\def\ds{\displaystyle}
\numberwithin{equation}{section}
\newtheorem{Theorem}{Theorem}[section]
\newtheorem{definition}[Theorem]{Definition}
\newtheorem{prop}[Theorem]{Proposition}
\newtheorem{lemma}[Theorem]{Lemma}
\newtheorem{remark}[Theorem]{Remark}
\begin{document}

\title{Mean field games with congestion}
\author[Yves Achdou]{Yves Achdou}
\author[Alessio Porretta]{Alessio Porretta}

\address{Yves Achdou
\hfill \break\indent Univ. Paris Diderot, Sorbonne Paris Cit{\'e}, Laboratoire Jacques-Louis Lions, UMR 7598, UPMC, CNRS, F-75205 Paris, France , 
}{}
\email{achdou@ljll-univ-paris-diderot.fr}

 \address{Alessio Porretta
\hfill \break\indent Dipartimento di Matematica, Universit{\`a} di Roma ``Tor Vergata'', Via della Ricerca Scientifica 1,
00133 Roma, Italia}{}
\email{porretta@mat.uniroma2.it}

\thanks{}

\maketitle

\begin{abstract}
  We consider a class of systems of time dependent partial differential equations which arise in mean field type models with congestion. The systems couple a  backward viscous 
 Hamilton-Jacobi equation and a forward Kolmogorov equation both posed in $(0,T)\times (\R^N /\Z^N)$. 
  Because of congestion and by contrast with simpler cases,  the latter system can never be seen as the optimality conditions of an optimal control problem  driven by a  partial differential equation.
 The Hamiltonian vanishes as the density tends to $+\infty$ and may not even be defined in the regions where the density is zero.
After giving a suitable definition of  weak solutions,  we prove the existence and uniqueness results of the latter under rather general assumptions.
 No restriction is made on the horizon $T$.
\end{abstract}
\section{Introduction}
\label{sec:introduction}
Recently, an important research activity on mean field games (MFGs for short) has been initiated since the pioneering works \cite{LL06cr1,LL06cr2,LL07} of  Lasry and Lions: 
it aims at studying the  asymptotic behavior of  stochastic differential games (Nash equilibria) as the number $n$ of agents
tends to infinity.  In  these models, it is assumed that the agents are all identical and that  an individual agent can hardly influence the outcome of the game. 
 Moreover, each individual strategy is influenced by  some averages of functions  of 
the states of the other agents. In the limit when $n\to +\infty$, a given agent feels the presence of the others  through the 
statistical distribution of the states. Since perturbations of a single agent's strategy does not influence the statistical states distribution, 
the latter acts as a parameter  in the control problem to be solved by each  agent.  When the dynamics of the agents are independent stochastic processes, MFGs
naturally lead to a coupled system of two partial differential equations (PDEs for short), a forward Kolmogorov or Fokker-Planck equation  and a backward Hamilton-Jacobi-Bellman equation, see for example (\ref{MFG0}) below.\\

The theory of MFGs allows one to model congestion effects, i.e.  situations in  which the cost of displacement of the agents increases
 in those regions where the density is large. MFGs models including congestion   were introduced and studied in \cite{L-coll}.  
A typical such model  leads to the following system of PDEs:
\begin{equation}\label{MFG0}
\begin{cases}
 -\partial_t u - \nu \Delta u +\frac1\beta\frac{|Du|^\beta}{(m+\mu)^\alpha}  = F(t,x,m)\,,
  & \quad (t,x)\in (0,T)\times \Omega
  \\
  \partial_t m  -\nu \Delta m -{\rm div} (m \frac{  |Du|^{\beta-2}Du}{(m+\mu)^\alpha})=0\,, &\quad  (t,x)\in (0,T)\times \Omega
  \\
  m(0,x)=m_0(x)\,,\,\,u(T,x)= G(x,m(T))\,, & \quad x\in \Omega\,,
\end{cases} 
\end{equation}
with $\nu>0$, $\alpha>0$, $\beta\in (1,2]$, $\mu\in \R$ with either $\mu>0$ or $\mu=0$. System (\ref{MFG0}) must generally be complemented with suitable boundary conditions on $(0,T)\times \partial \Omega$, but we will avoid the additional technical difficulties coming from the latter by  focusing on the case when $\Omega$ is the flat torus, i.e.  
$\Omega={\mathbb T}^N=\R^N /\mathbb Z^N$  and all the data are periodic.
\\
Loosely speaking, (\ref{MFG0}) describes the optimization over a stochastic dynamics defined on  a standard probability space $({\mathcal X}, {\mathcal F}, {\mathcal F}_t, {\mathbb P})$
$$
dX_t= w_t\,dt + \sqrt {2\nu}\, dB_t
$$
where  $B_t$ is a  $N$-dimensional Brownian motion, 
with a cost criterion  given by  
\be\label{Lag}
\inf_{\beta}\,\,\left[{\mathbb E} \int_0^T \left\{c_\beta\,\,(m_t+\mu)^\gamma\, |w_t|^{\frac\beta{\beta-1}}+ F(t,X_t, m_t)\right\}dt+ {\mathbb E}( G(X_T, m_T))\right]\,,
\ee
where $\gamma= \frac\alpha{\beta-1}$ and $c_\beta$ is a  suitable normalization constant.
 In the viewpoint of the generic agent, $m_t=m(t,X_t)$ is meant to represent  the distribution law of the states, however in the optimization process it is just, a  priori, a given frozen density function. The mean field game equilibrium is next given  
through a  fixed point scheme, by requiring,  a  posteriori,   that $m_t$ coincides with  the probability density in $\R^N$ of  the law of the optimal process $X_t$. \\

In \cite{L-coll}, P-L. Lions put the stress on general stuctural conditions yielding the uniqueness for the following  MFG systems  with local coupling:
\begin{eqnarray}
  \label{MFGu} -\partial_t u - \nu \Delta u + H(t,x, m,Du)  = F(t,x,m)\,,
  & \quad (t,x)\in (0,T)\times \Omega
  \\   \label{MFGm}
  \partial_t m  -\nu \Delta m -{\rm div} (m H_p(t,x,m,Du))=0\,, &\quad  (t,x)\in (0,T)\times \Omega
  \\   \label{MFGbc}
  m(0,x)=m_0(x)\,,\,\,u(T,x)= G(x,m(T))\,, & \quad x\in \Omega\,  
\end{eqnarray}
namely  that $F$ and $G$ be increasing  w.r.t. $m$ and that the following matrix be positive semidefinite:
\begin{equation}\label{hyp-uniq}
  \begin{pmatrix}
 -\frac  2  m \frac {\partial  H} {\partial m}    \left(t,x,m,p\right) &    \frac {\partial } {\partial m}  \nabla_p ^T H(t,x,m,p)
\\
  \frac {\partial } {\partial m}  \nabla_p  H(t,x,m,p) &  2D^2_{p,p}  H(t,x,m,p)
 \end{pmatrix}
 \geq 0
\end{equation}
for all $x\in \Omega$, $m>0$ and $p\in \R^N$. 
Since (\ref{MFG0}) is equivalent to (\ref{MFGu})-(\ref{MFGbc}) with  $H(t,x,m,p)=\frac1\beta\frac{|p|^\beta}{(m+\mu)^\alpha}$,
 (\ref{hyp-uniq}) becomes in this case
\begin{equation}
  \label{eq:50}
\alpha\leq \frac{4(\beta-1)}\beta.
\end{equation}
In the present work, we will show that this hypothesis yields both the existence and the uniqueness of weak solutions.
\\
Except for situations in which  special tricks may be applied (stationary problems and quadratic Hamiltonian, see  \cite{Gomes-cong}),  the existence of classical solutions of  suitable generalizations of (\ref{MFG0}) seems difficult to obtain, because generally neither upper bounds on $m$  nor strict positivity of $m$ are known unless one restricts the growth conditions for the nonlinearities and assumes the time horizon $T$ to be small, see \cite{GV15}, \cite{Graber15} (see also  \cite{Eva-Gomes} for the stationary case).
Therefore, in order to get at  a sufficiently general result, we  aim at proving the existence and uniqueness of suitably defined weak solutions.

For MFG models without congestion, the first results on the existence of weak solutions were  supplied in \cite{LL07}.
Besides,  as already observed in \cite{LL06cr1,LL06cr2,LL07}, in the easiest cases, the system of PDEs can be seen as the optimality conditions 
of  a  problem of optimal control driven by a PDE: in such cases,  a pair of primal-dual optimization problems can be introduced,
 leading  to a suitable weak formulation   for which there exists a unique solution, see \cite{CGPT},
where possibly degenerate diffusions are dealt with. A striking fact is that in general,  MFGs with congestion cannot be cast into an optimal control problem 
driven by a PDE, by contrast with simpler cases. For MFG systems \rife{MFGu}--\rife{MFGbc}  with  $H$  independent of $m$, a complete analysis is available in \cite{Parma}, which contains in particular
new results on weak solutions of Fokker-Planck equations,  and an answer to the delicate question of the uniqueness of weak solutions of MFG systems.
Proving uniqueness of weak solutions is difficult for the following reason: to compare two solutions $(u,m)$ and $(\tilde u, \tilde m )$ of (\ref{MFG0}),
 the main idea is to test the Bellman equations by $m-\tilde m$ and the Kolmogorov equations by $u-\tilde u$ and  sum the resulting identities.
While this is of course permitted for classical solutions, special care is needed for weak solutions, because the PDEs only hold in the distributional sense.
The present work will borrow several ideas and results from \cite{Parma}. 

Let us   mention that other kinds of models, which may also include congestion,  are obtained by  assuming that all the agents use the same distributed feedback strategy
 and by passing  to the limit as $N\to \infty$ before optimizing the common feedback. Given a common feedback strategy, the asymptotics are 
given by the McKean-Vlasov theory, \cite{MR0221595}: the dynamics of a given agent is found by solving  a stochastic differential equation with coefficients depending on 
a mean field, namely the statistical distribution of the states, which may also affect the objective function. Since the feedback strategy is common to all agents, perturbations of the latter affect the mean field.  Then, letting each player optimize its objective function amounts to solving a control problem driven by the McKean-Vlasov dynamics. The latter is named control of McKean-Vlasov dynamics by R. Carmona and F. Delarue \cite{MR3045029,MR3091726} and mean field type control by A. Bensoussan et al, \cite{MR3037035,MR3134900}.  By contrast with MFGs, this  genuine  connection to optimal control problems driven by a  PDE makes  it possible to use techniques from the calculus of variations and define a suitable notion of weak solutions for which existence and uniqueness can be proved, see \cite{AL}.  
\\
Finally, let us mention that numerical methods and simulations for MFGs 
and mean field type control with congestion are dealt with in \cite{MR3135339,YAML}.
  \\
The present paper is devoted to prove the existence and uniqueness of weak solutions to a class of  MFG systems which generalize \rife{MFG0} including the congestion effects in  the structure conditions of the Hamiltonian function.  Making reference to the model example \rife{MFG0}, we will consider  both the case when $\mu>0$ and the case when  $\mu=0$.  Let us notice that the congestion effect is essentially contained in the behavior of the Hamiltonian for {\it large values of $m$}, so both cases could be considered as congestion models, since the difference between $\mu>0$ and $\mu=0$
will be important in those regions of vanishing density. Hereafter, the case that $\mu=0$ will be referred to as the case of {\it singular congestion}, since in this case the Hamiltonian may  not even be  defined at $m=0$. In the latter case,   the Lagrangian vanishes at $m=0$, and this vanishing of the  cost criterion will lead to a  slightly incomplete information on the equation solved by  the value function, 
and therefore to  a  relaxed formulation of the notion of weak solution.

The work is organized as follows: in Section 2  we set the problem and introduce the assumptions that will be used throughout the paper.  In Section 3 we  define the notions of weak solutions and state the main existence and uniqueness results: as explained above, we will give a different definition of solution in the case $\mu=0$,
in which  special care is required because the Hamiltonian has a singularity  at $m=0$.  In Section 4, we consider non singular Hamiltonians ($\mu>0$) and give the first steps of the existence proof which consist of studying a sequence of suitably regularized problems.
Section 5 contain  lemmas which will turn crucial for proving uniqueness but also for identifying the limit of the aforementioned regularized problems.
In Section 6,  in the case $\mu>0$, we conclude the proof of existence and obtain uniqueness  under suitable additional assumptions.
Section 7 is devoted to the singular case $\mu=0$.

\section{Running assumptions}
\label{sec:running-assumptions-1}

Let us consider the general MFG system (\ref{MFGu})-(\ref{MFGbc}). Different type of boundary conditions could be considered but  for simplicity we neglect this point by assuming that the equation takes place in a standard flat torus $\Omega={\mathbb T}^N=\R^N /\mathbb Z^N$ and all functions are $\mathbb Z^N$-periodic in $x$.  
We define $Q_T=(0,T)\times \Omega$.

 In (\ref{MFGu})-(\ref{MFGbc}), the function   $H(t,x,m,p)$ is assumed to be  measurable with respect  to $(t,x)$, continuous with respect to $m$ and $C^1$ with respect to $p$.
The notation $H_p$ stands for $\frac{\partial H(t,x,m,p)}{\partial p}$. We also assume that $H$  is  a convex function of $p$.


We modify the structure growth conditions  introduced in  \cite{LL07} to take into account the congestion factor. So, we will assume that $H$ satisfies, for some positive $\beta$ and $\alpha$
\begin{eqnarray}
H(t,x,m,0)\le 0\,, &\label{eq:1} \\
H(t,x,m,p) \geq c_0\,  \frac{|p|^\beta}{(m+\mu)^\alpha}- c_1 \,(1+m^{\frac\alpha{\beta-1}})\,, &  \label{coerc}
\\
|H_p(t,x,m,p)| \leq c_2\,  (1+ \frac{|p|^{\beta-1}}{(m+\mu)^\alpha})\,,  & 
\label{grow}
\\
H_p(t,x,m,p)\cdot p  \geq (1+\sigma)\, H(t,x,m,p)  - \, c_3\,(1+m^{\frac\alpha{\beta-1}})\,,
& \label{conv}
\end{eqnarray}
for a.e.  $(t,x)\in Q_T$ and every $(m,p)\in \R_+\times  \R^N$,  where $\sigma,c_0,\ldots,c_3$ are positive constants and $\mu\in \R$, $\mu\geq 0$. For the sake of clarity, we will separately consider  the two cases  $\mu>0$ and $\mu=0$.

Notice that \rife{eq:1} can be alternatively rephrased as: $H(t,x,m,0)$ is bounded from above; indeed, if $
H(t,x,m,0)\le  K$ for some $K>0$, we can reduce to \rife{eq:1} by changing $u$ into $u+K(T-t)$.

\bigskip

As for the ranges of $\beta,\alpha$ in the previous assumptions, we will assume that
\begin{equation}\label{alfabeta}
1<\beta \leq 2\,,\qquad 0<\alpha\leq \frac{4(\beta-1)}\beta\,.
\end{equation}
The first condition on $\beta$ is meant to exclude both the case of linear or sub linear growth and the case of super quadratic growth. 
Actually, if $\beta\leq 1$, then the bound on  $H_p$ would imply that $m$ is bounded, and this would allow for a  huge simplification in the system (at least if $\mu>0$), in particular the difficult issues coming from the congestion effect would be mostly circumvented and this case would be rather similar to the results which already exist in the literature. 
The super quadratic case is different in nature since one cannot rely any more on classical tools for parabolic equations and  a different approach should be employed, as in the first order case. Since this would bring us too far for the present paper, we decided to assume that  $\beta\leq 2$.

As for (\ref{alfabeta}),  this structural condition seems  natural in order to have a completely well-posed formulation in  the congestion model. 
As already mentioned, this limitation  is indeed required in order to obtain uniqueness, as pointed out by P.L. Lions \cite{L-coll}, when he discussed general conditions for the uniqueness of mean field games systems.
In addition, this condition also seems to be useful at other stages of our existence proof. The upper bound required on $\alpha$ can be interpreted 
 as a tolerable  growth condition on  the cost function which penalizes the motion in the zones where the distribution density is high (since we have $\gamma= \frac \alpha{\beta-1}$ in the Lagrangian model \rife{Lag}).

Finally, the case $\alpha=0$ is excluded in the above conditions since this is already treated in \cite{Parma}.  
\begin{remark}\label{sec:running-assumptions-2}
  Straightforward calculus leads to  $4/\beta ' \le \beta$: from this and  (\ref{alfabeta}), we see that $\alpha\le \beta$, and that $\alpha=\beta$ can occur only if $\beta=\alpha=2$.
\end{remark}
Notice that combining (\ref{eq:1}) and the convexity of $H$ in the $p$ variable yields that
\begin{equation}
  \label{eq:2}
H_p(t,x,m,p)\cdot p -H (t,x,m,p)\ge -H(t,x,m,0)\ge 0. 
\end{equation}
Moreover,  since $1<\beta\le 2$, and from (\ref{coerc})-(\ref{conv}), we can deduce that  there exist   nonnegative constants $C_0, C_1, C_2$ such that
\begin{equation}\label{hp2}
 m^{\frac\alpha{\beta-1}+1 } \left( |H_p|^2 - C_0\right) \leq C_1 m \left\{H_p\cdot p- H + C_2\right\} .
\end{equation}
Using generic non negative constants $C_0, C_1$ that can change from line to line, (\ref{hp2}) is obtained as follows:
\begin{displaymath}
  \begin{split}
    m^{\frac\alpha{\beta-1}}  |H_p|^2 &\le C_1  \left( m^{\frac\alpha{\beta-1}}+ \frac {|p|^{2(\beta-1)}} {(m+\mu )^{2\alpha -\frac\alpha{\beta-1}}}\right)\\
 &\le  C_1 m^{\frac\alpha{\beta-1}}+    c_0 \frac {|p|^{\beta}}{(m+\mu )^\alpha} 
+ C_0 (m+\mu)^{\frac \alpha {\beta-1}}
\\
&\le  H+  C_1 (m^{\frac\alpha{\beta-1}} +1 ) 
\\
&\le \sigma^{-1}(H_p\cdot p -H)  + C_1 + C_0 m^{\frac\alpha{\beta-1}}
  \end{split}
\end{displaymath}
where we have used (\ref{grow}) in the first line,  $1< \beta\le 2$ and Young's inequality in the second line, (\ref{coerc}) in the third line and (\ref{conv}) in the fourth line.

We also stress that, if $H$ satisfies \rife{eq:1}--\rife{conv}, then $H+ b\cdot p$ satisfies the same conditions for any bounded vector field $b$. Hence the addition of bounded drift terms in the dynamics is permitted 
by the above setting of assumptions. 

\begin{remark}
Let us  compare the above assumptions with previous settings used for congestion models in mean field games. In \cite{GV15}, 
  congestion models are presented starting  from the Lagrangian function
$$
L(t,x,m,q)=m^\gamma L_0(t,x,q-b(t,x)) +F(t,x,m),\quad \hbox{with } \quad  L_0(t,x,q)= a(t,x) (1+ |q|^2 )^{\frac {\beta '} 2},
$$
where  $\beta'=  \beta /(\beta -1)$,  $a$ is a smooth positive function and $b$ is a smooth velocity field. This leads to the system 
$$
\begin{cases}
 -\partial_t u - \nu \Delta u +m^{\gamma} H(t,x,\frac{Du}{m^\gamma})  = F(t,x,m)\,,
  & \quad (t,x)\in (0,T)\times \Omega
  \\
  \partial_t m  -\nu \Delta m -{\rm div} (m H_p(t,x,\frac{Du}{m^\gamma}))=0\,, &\quad  (t,x)\in (0,T)\times \Omega
  \\
  m(0,x)=m_0(x)\,,\,\,u(T,x)= G(x,m(T))\,, & \quad x\in \Omega\,
  \end{cases}
$$
where
\begin{displaymath}
  H(t,x,p)= H_0(t,x,p)+ b(t,x)\cdot p, \quad \hbox{and }\quad    H_0(t,x,p)= \sup_{q} -q\cdot p - L_0(t,x,q).
\end{displaymath}
If we set $\alpha= \gamma(\beta-1)$, we recover our assumptions if $\gamma \le 4/\beta$ which is equivalent to saying that
$\nabla_p H_0 (t,x,p)\cdot p- H_0(t,x,p)\ge \frac \gamma  4   p^T D_{pp}H_0 (t,x,p) p$ for all $p\not = 0$, as assumed in \cite{GV15}.
\end{remark}
\vskip0.4em

Finally, let us  make precise the assumptions on the functions $F,G$ appearing in \rife{MFGu}--\rife{MFGbc}.
The function $F$ is assumed to be measurable with respect to $(t,x)\in Q_T$ 
and continuous with respect to $m$.  
In addition, we assume that   there exists   a nondecreasing  function  $f(s)$ such that $f(s)s$  is convex and
\begin{equation}\label{F1}
  \lambda\, f(m)- \kappa \leq F(t,x,m)\leq  \frac1\lambda\,f(m)+\kappa \quad \forall m\in \R_+\,,
\end{equation}
for some real numbers $\lambda, \kappa>0$.

The  terminal cost $G$ is assumed to be measurable with respect to $x\in \Omega$, continuous with respect to $m$ and such that
\be\label{G1}
m\mapsto G(x,m)\qquad \hbox{is nondecreasing.}
\ee
In addition, as for $F$, we assume that  there exists a  function $g(s)$ such that $g(s)s$  is convex and
\begin{equation}\label{G2}
  \lambda\, g(m)- \kappa \leq G(x,m)\leq  \frac1\lambda\,g(m)+\kappa \quad \forall m\in \R_+\,, 
\end{equation}
with for example the same constants  $\lambda, \kappa>0$ as in \rife{F1}.

Let us notice that, as a consequence of assumptions \rife{F1} and \rife{G2}, the functions $F, G$ are bounded below, namely there exists a constant $c_4\in \R$ such that
\begin{equation}\label{F0}
F(t,x,m) \geq c_4\,,\quad  \forall \,\, m\in \R_+\,,\,\, {\rm a.e.} \,\, (t,x)\in Q_T\,,
\end{equation}
and
\be\label{G0}
  G(x,m)\geq c_4\,,\quad  \forall \,\, m\in \R_+\,,\,\, {\rm a.e.} \,\, x\in \Omega\,.
\end{equation}

Assumptions (\ref{eq:1})--\rife{alfabeta}   and (\ref{F1})--(\ref{G2}) are the structural conditions under which we will prove the existence of weak solutions to the MFG system with congestion. Eventually, we will  need a  further condition for $H$ and $F$ in order to have uniqueness. This will be  a natural reformulation of condition \rife{hyp-uniq}. Precisely, we assume the following: 
\be\label{eq:44}
\begin{split}
& \hbox{for any $m\ge 0$, $z\ge -m$,  $p,r \in \R^N$ such that $(z,r)\not=(0,0)$, setting}
\\ & \quad 
\hbox{  $m_s=m+sz$ and  $p_s= p+sr$ for $s\in [0,1]$, then the function}
\\
\m &  \qquad  \qquad 
h: s\mapsto -z H(t,x,m_s, p_s) + m_s H_{p}(t,x,m_s, p_s) \cdot r +  z F(t,x, m_s)
\\ 
\m
& 
\hbox{is (strictly) increasing on $[0,1]$.} 
\end{split}
\end{equation}
\vskip0.3em
\begin{remark}\label{sec:case-non-singular-4}
  Note that if $H(t,x,m,p)= (\mu+m)^{-\alpha} |p|^\beta$, with $(\alpha, \beta)$ satisfying (\ref{alfabeta})
 and if $F$ is  nondecreasing with respect to $m$, then $h$ defined in (\ref{eq:44}) is increasing.  Condition (\ref{alfabeta}) is thus compatible 
with the assumption yielding uniqueness.
\end{remark}

\vskip1em
Finally,  we assume $m_0\in  C(\Omega)$, $m_0\geq 0$ and, in order to be consistent with the interpretation  of $m$, we fix the normalization condition $\into m_0=1$.  

\section{Main results}
\label{sec:main-results}

\subsection{Case of non singular congestion} 
\label{sec:case-non-singular-2}
\quad 
We assume here that conditions \rife{eq:1}--\rife{conv} hold with $\mu>0$.
Let us first make our notion of weak solution precise.

\begin{definition}\label{weaksol} A pair $(u,m)\in \parelle1\times \parelle1 $ is a weak solution  to (\ref{MFGu})-(\ref{MFGbc})  if 
\vskip0.3em
(i) $m\in C([0,T];\elle1)$, $m\geq 0$, $u\in L^q(0,T; W^{1,q}(\Omega))$ for every $q<\frac{N+2}{N+1}$,  
\vskip0.3em
(ii) 
\begin{align*}
& F(t,x,m)m\in \parelle1\,,\qquad G(x,m(T))m(T)\in \elle1\,,
\\
 &   m\,\frac{|Du|^\beta}{(m+\mu)^\alpha}\in \parelle1\,, \qquad  \frac{|Du|^\beta}{(m+\mu)^\alpha}\in \parelle1\,, \qquad m\in \parelle{1+\frac\alpha{\beta-1}}\,
\end{align*}
\vskip0.3em
(iii)  $u\in \limitate1$ is bounded below  and is   a solution of the Bellman equation in the sense of distributions:
\begin{equation}
  \label{eq:17}
  \begin{split}
\int_0^T \into u\, \vfi_t\, dxdt & - \nu\int_0^T \into u\,\Delta\vfi\, dxdt 
+ \int_0^T\into H(t,x,m,Du)\vfi\, dxdt
\\
& = \int_0^T \into F(t,x,m)\vfi\, dxdt + \into G(x,m(T))\vfi(T)\, dx     
  \end{split}
\end{equation}
for every $\vfi\in C^\infty_c((0,T]\times \Omega)$.

(iv) $m$ is a solution of the Kolmogorov equation:
\begin{equation}
  \label{eq:16}
\int_0^T\into m\left\{ -\vfi_t-\nu\Delta \vfi+H_p(t,x,m,Du)D\vfi\right\}\, dxdt =\into m_0\,\vfi(0)\, dx
\end{equation}
for every $\vfi\in C^\infty_c([0,T)\times \Omega)$.
\end{definition} 

\begin{remark}\label{ureno}
We notice that, on account of conditions (ii), \rife{eq:17} implies that $u\in \parelle1$   solves, in weak sense, 
$$
\begin{cases}
 -\partial_t u - \nu \Delta u =f(t,x)\,,
  & \quad (t,x)\in (0,T)\times \Omega
  \\
  u(T,x)= g(x)\,, & \quad x\in \Omega\,
  \end{cases}
$$
for some $f\in \parelle1$, $g\in \elle1$. It follows that $u\in \continue1$ and, in addition, $u$ is a  renormalized solution of the same equation (see e.g. \cite[Proposition 2.1]{Parma}). 
Indeed, the condition that $u$  belongs to $\limitate1$ is actually redundant in (iii), since it  follows from the above result.
\end{remark}

As stated in the previous remark,  a solution $u$ of equation \rife{eq:17} belongs to $\continue1$. However we also need to work with merely {\it subsolutions} of the same equation, for which this kind of continuity may not hold.  We recall a lemma, whose proof can be found in \cite{CGPT,AL}, which establishes continuity for subsolutions in a weaker sense.

\begin{lemma}\label{sec:case-non-singular}
Let  $u\in L^\infty(0,T;\elle1)$ satisfy, for some function $h\in L^1(Q_T)$ and $k\in L^1(\Omega)$,
\begin{displaymath}
\int_0^T \into u\, \vfi_t\, dxdt  - \nu \int_0^T \into u\,\Delta\vfi\, dxdt  \le  \int_0^T \into h \vfi\, dxdt + \into k(x)\vfi(T,x)\, dx,
\end{displaymath}
for every nonnegative function $\vfi\in C^\infty_c((0,T]\times \Omega)$.  

Then for any Lipschitz continuous map $\xi: \Omega \to \R$, the map $t\mapsto \int_{\Omega} \xi(x) u(t, x) dx$
 has a BV representative on $[0,T]$. Moreover, if we note $\int_{\Omega} \xi(x) u(t^+, x) dx$ its right limit at $t\in [0,T)$,
 then the map $\xi \mapsto \int_{\Omega} \xi(x)u(t^+, x) dx$  can be extended to a  bounded linear form on  $C(\Omega)$.
\end{lemma}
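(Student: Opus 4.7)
The plan is to view the subsolution hypothesis as a distributional differential inequality in $t$ for the scalar functions $t\mapsto\into\xi u\,dx$, and then to argue by density in $\xi$.

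Fix first a smooth $\xi\in C^\infty(\Omega)$ with $\xi\ge 0$ (the general case follows by adding a constant and using linearity). Plugging $\vfi(t,x)=\theta(t)\xi(x)$ with $\theta\in C^\infty_c((0,T))$, $\theta\ge 0$, into the hypothesis and setting
\begin{equation*}
\psi_\xi(t):=\into \xi\,u(t,\cdot)\,dx\in L^\infty(0,T),\qquad \Phi_\xi(t):=\nu\into u(t,\cdot)\,\Delta\xi\,dx+\into h(t,\cdot)\,\xi\,dx\in L^1(0,T),
\end{equation*}
we obtain $\int_0^T\theta'(t)\psi_\xi(t)\,dt\le\int_0^T\theta(t)\Phi_\xi(t)\,dt$. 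Introducing $G_\xi(t):=\psi_\xi(t)+\int_0^t\Phi_\xi(s)\,ds$ and integrating by parts in the $\Phi_\xi$-integral (using that $\theta$ vanishes at both endpoints) rewrites this as $\int_0^T\theta'(t)G_\xi(t)\,dt\le 0$, i.e., $G_\xi'\ge 0$ in $\mathcal{D}'(0,T)$. Hence $G_\xi$, and therefore $\psi_\xi$, admits a BV representative on $[0,T]$; its right-continuous version $\psi_\xi^+$ is defined pointwise on $[0,T)$ and inherits from the a.e.\ estimate $|\psi_\xi(s)|\le M\|\xi\|_\infty$, with $M:=\|u\|_{L^\infty(0,T;L^1(\Omega))}$, the uniform bound $|\psi_\xi^+(t)|\le M\|\xi\|_\infty$.

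For a Lipschitz $\xi$, I would regularize by periodic convolution $\xi_\delta=\xi\ast\rho_\delta$, so that $\xi_\delta\to\xi$ uniformly and $\mathrm{Lip}(\xi_\delta)\le\mathrm{Lip}(\xi)$. The previous estimate yields $|\psi_{\xi_\delta}^+(t)-\psi_{\xi_{\delta'}}^+(t)|\le M\|\xi_\delta-\xi_{\delta'}\|_\infty$, so $\{\psi_{\xi_\delta}^+\}$ is Cauchy in $L^\infty([0,T))$ and converges uniformly to a right-continuous bounded function $\psi_\xi^+$ that coincides a.e.\ with $\psi_\xi$. To upgrade this limit to a \emph{BV} representative, one needs a $\delta$-uniform bound on the total variation of $\psi_{\xi_\delta}$, whose delicate term is $\nu\int_0^T\bigl|\into u\,\Delta\xi_\delta\,dx\bigr|\,dt$. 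Writing the subsolution hypothesis as $-\partial_t u-\nu\Delta u=h-\mu$ for a nonnegative Radon measure $\mu$ (whose total mass is controlled by $\|h\|_{L^1}$, $\|k\|_{L^1}$ and the BV norm of $t\mapsto\into u(t)\,dx$, obtained from the case $\xi\equiv 1$ above), standard parabolic $L^1$/measure theory \`a la Boccardo--Gallou\"et ensures $u\in L^q(0,T;W^{1,q}(\Omega))$ for $q<(N+2)/(N+1)$, and integration by parts then gives $\bigl|\into u\,\Delta\xi_\delta\,dx\bigr|\le\mathrm{Lip}(\xi)\,\|Du(t,\cdot)\|_{L^1(\Omega)}$, whose $L^1$ norm in $t$ is bounded independently of $\delta$.

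Finally, the uniform estimate $|\psi_\xi^+(t)|\le M\|\xi\|_\infty$ valid for Lipschitz $\xi$ allows the linear form $\xi\mapsto\psi_\xi^+(t)$ to be extended by density to a bounded linear form on $C(\Omega)$, as claimed. The main obstacle of the plan is the passage from smooth to Lipschitz $\xi$: naive regularization produces blow-up of $\Delta\xi_\delta$ in $L^\infty$, and circumventing this crucially exploits the spatial regularity of $u$ provided by the subsolution structure combined with the standard $L^1$-parabolic theory.
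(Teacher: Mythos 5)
The paper does not actually prove this lemma --- it is quoted from \cite{CGPT,AL} --- and your argument reproduces the standard proof from those references: testing with $\theta(t)\xi(x)$ shows that, for smooth $\xi\ge 0$, the function $t\mapsto \into \xi u(t)\,dx+\int_0^t\bigl(\nu\into u\Delta\xi\,dx+\into h\xi\,dx\bigr)ds$ has nonnegative distributional derivative, hence is monotone and BV, and the uniform bound $|\psi_\xi^+(t)|\le \|u\|_{\limitate1}\|\xi\|_\infty$ then yields both the passage to Lipschitz $\xi$ and the extension to a bounded linear form on $C(\Omega)$. This is correct. The one step you should make fully explicit is the appeal to Boccardo--Gallou\"et regularity: those are a priori estimates for solutions obtained by approximation (or by duality), so to conclude $u\in L^q(0,T;W^{1,q}(\Omega))$ from the identity $-\partial_t u-\nu\Delta u=h-\mu$ (with $\mu$ the finite nonnegative defect measure, whose mass you correctly control) you must first know that the $L^1$-distributional solution of this linear backward heat equation is unique, hence coincides with the regular one; on the torus this follows by testing against solutions of the adjoint problem with smooth data, and it is precisely what Remark~\ref{ureno} invokes via \cite[Proposition 2.1]{Parma}. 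Granting that, your $\delta$-uniform total-variation bound via $\into u\,\Delta\xi_\delta\,dx=-\into Du\cdot D\xi_\delta\,dx$ and lower semicontinuity of the variation under uniform convergence close the argument; note also that this detour through $W^{1,q}$ is needed only for the BV claim with merely Lipschitz $\xi$, since the extension to $C(\Omega)$ already follows from the smooth case by density.
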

\begin{remark}
\label{sec:case-non-singular-1}
As a consequence of Lemma~\ref{sec:case-non-singular}, for any subsolution  $u$ of \rife{eq:17} we can define 
$u(0^+)$ as a bounded Radon measure on $\Omega$. For simplicity, we note $u(0)=u(0^+)$.
  \end{remark}

  \begin{remark}
    \label{sec:case-non-singular-3}
    A weaker definition than  Definition~\ref{weaksol} is possible, by replacing (ii) by  $G(x,m(T))\in \elle1$, 
    $F(t,x,m) \in \parelle1$, $H(t,x,m,Du)\in \parelle1$ and $m\,H_p(t,x,m,Du)\in \parelle1$. In this case,  we would need to derive firstly the regularity (ii) in order to prove uniqueness and this could be itself a delicate step. It does not seem restrictive to ask directly (ii) in the formulation since such regularity is actually obtained in the existence result.
 \end{remark}
 
 Finally, let us state below our main result, where we prove existence and uniqueness of weak solutions under the above assumptions.
   
\begin{Theorem} \label{sec:case-non-singular-5}
Under Assumptions (\ref{eq:1})--\rife{conv} with $\mu>0$, (\ref{alfabeta}) and (\ref{F1})--(\ref{G2}), 
 there exists a weak solution of problem (\ref{MFGu})-(\ref{MFGbc}).  \\
Furthermore, if the assumption  \rife{eq:44} is satisfied, 
then there is a unique weak solution of problem (\ref{MFGu})-(\ref{MFGbc}). 
\end{Theorem}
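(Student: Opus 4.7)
I would argue by approximation. Regularize $(H,F,G)$ by smooth bounded data $(H_n,F_n,G_n)$ preserving the structural bounds \rife{coerc}--\rife{conv} and \rife{F1}--\rife{G2} uniformly in $n$ (e.g.\ truncate in the $m$ variable at level $n$ and mollify). For each $n$, a classical solution $(u_n,m_n)$ is constructed by a Schauder fixed point in $C(\overline Q_T)$: given a candidate $\mu$, solve the bounded HJB equation for $u$ and then the linear Kolmogorov equation for $m$, applying standard parabolic regularity. The decisive a priori estimate is the energy identity obtained by testing the Bellman equation against $m_n$ and the Kolmogorov equation against $u_n$; after the time and Laplacian pairings cancel,
\[
\int_{Q_T} m_n\bigl\{H_{n,p}(m_n,Du_n)\cdot Du_n-H_n(m_n,Du_n)\bigr\}+\int_\Omega m_n(T)\,G_n(\cdot,m_n(T))+\int_{Q_T} m_n F_n(m_n)=\int_\Omega m_0\,u_n(0).
\]
By \rife{conv}, \rife{coerc}, \rife{F1}, \rife{G2}, this bounds $\int m_n|Du_n|^\beta/(m_n+\mu)^\alpha$, $\int m_n^{1+\alpha/(\beta-1)}$, $\int m_n F(m_n)$ and $\int m_n(T)G(\cdot,m_n(T))$ uniformly in $n$. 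Together with renormalized-solution estimates for the HJB equation with $L^1$ data (cf.\ Remark~\ref{ureno}) and the standard $L^qW^{1,q}$ bounds for parabolic equations with measure-like drift, this produces the compactness required by Definition~\ref{weaksol}.

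\textbf{Passing to the limit.} The main obstacle is identifying the nonlinear terms $H_n(m_n,Du_n)$ and the drift $m_n H_{n,p}(m_n,Du_n)$ in the limit. Up to a subsequence, Aubin--Lions gives $m_n\to m$ strongly in $L^1(Q_T)$ and a.e., while $Du_n\rightharpoonup Du$. Almost everywhere convergence of $Du_n$ can be recovered by combining weak lower semicontinuity of the convex integrand $p\mapsto m\,H(t,x,m,p)$ with passage to the limit in the energy identity above (turning the semicontinuity into equality); inequality \rife{hp2} then upgrades this to strong $L^1$ convergence of the drift $m_nH_{n,p}(m_n,Du_n)$. The identification lemmas of Section~5 will provide the rigorous framework for this step. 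On $\{m=0\}$, the factor $m$ ensures that the drift vanishes, and the Hamiltonian term in the HJB equation is controlled via \rife{coerc} and the renormalized structure.

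\textbf{Uniqueness.} Assume \rife{eq:44} and let $(u,m),(\tu,\tm)$ be two weak solutions. I follow the Lasry--Lions duality strategy, adapted to the weak setting as in \cite{Parma}. The plan is to test the HJB for $u$ against $\tm-m$, the HJB for $\tu$ against $m-\tm$, the Kolmogorov for $m$ against $u-\tu$, and the Kolmogorov for $\tm$ against $\tu-u$. The justification of these pairings at the regularity of Definition~\ref{weaksol} is the most delicate point; it uses Lemma~\ref{sec:case-non-singular}, the renormalized structure of Remark~\ref{ureno}, and the auxiliary tools from Section~5. Summing the four identities, the $\partial_t$ contributions combine into the terminal term $\int_\Omega(G(x,m(T))-G(x,\tm(T)))(m(T)-\tm(T))\,dx$ (using $m(0)=\tm(0)=m_0$), the $\Delta$ contributions cancel by integration by parts, and the Hamiltonian, drift and coupling terms organize themselves into
\[
\int_\Omega\bigl(G(x,m(T))-G(x,\tm(T))\bigr)\bigl(m(T)-\tm(T)\bigr)\,dx+\int_{Q_T}\bigl(h(1)-h(0)\bigr)\,dxdt=0,
\]
where $h(s)=-z\,H(t,x,m+sz,Du+sr)+(m+sz)\,H_p(t,x,m+sz,Du+sr)\cdot r+z\,F(t,x,m+sz)$ with $z=\tm-m$ and $r=D\tu-Du$. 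The first integral is $\geq 0$ by \rife{G1} and the second by \rife{eq:44}, so both vanish identically; strict monotonicity of $h$ then forces $z=r=0$ a.e., i.e.\ $m=\tm$ on $Q_T$ and $Du=D\tu$ a.e.\ on $\{m>0\}$. Subtracting the Bellman equations yields a linear renormalized parabolic equation for $u-\tu$ with zero source and zero terminal condition, hence $u=\tu$.
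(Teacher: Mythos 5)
Your overall architecture matches the paper's (regularization and Schauder fixed point, crossed energy estimate, compactness, Lasry--Lions duality for uniqueness), but there are genuine gaps. The most serious one in the existence part is the claim that the energy identity alone bounds $\int_{Q_T} m_n^{1+\alpha/(\beta-1)}$. It does not: the energy identity controls $\int m_n F_n(m_n)$, $\int m_n(T)G_n(\cdot,m_n(T))$ and the defect term $\int m_n\{H_p\cdot Du_n-H\}$ only in terms of $\int_\Omega m_0\,u_n(0)$, and closing the estimate requires bounding $\int_\Omega [u_n(0)]^+$ by integrating the Bellman equation, which through the lower bound (\ref{coerc}) produces the uncontrolled term $c_1\int (T_{1/\epsilon}m^\epsilon)^{\alpha/(\beta-1)}$ on the right-hand side. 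The paper closes this loop by additionally testing the Kolmogorov equation with $(T_{1/\epsilon}m^\epsilon)^{\alpha/(\beta-1)}$, invoking the structural inequality (\ref{hp2}) --- this is precisely where $\alpha\le 4(\beta-1)/\beta$ enters the \emph{existence} proof, not only the uniqueness --- and a Gagliardo--Nirenberg interpolation to dominate $\|(T_{1/\epsilon}m^\epsilon)^{1+\alpha/(\beta-1)}\|_{L^{(N+2)/N}}$ by the defect term, which is then absorbed. A second omission is the terminal condition: since $m^\epsilon(T)$ converges only weakly in $L^1(\Omega)$ when $\beta=2$, identifying $u(T)=G(x,m(T))$ requires the Young-measure argument of Lemma~\ref{sec:non-sing-cong-4} and then the monotonicity (\ref{G1}) of $G$ combined with the crossed energy inequality to show the Young measure is a Dirac mass (Lemma~\ref{sec:exist-uniq-non-2}); your outline is silent on this. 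Incidentally, the a.e.\ convergence of $Du_n$ does not need your Minty-type argument: it follows from classical compactness for the heat equation with $L^1$ data, and the limit of $H$ is identified by the strong-convergence-of-truncations theorem of \cite{P}.

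For uniqueness, your final algebra (the function $h$, positivity of $E$, strict monotonicity forcing $m=\tilde m$ and $Du=D\tilde u$) is exactly the paper's, but you have deferred the entire analytic content. The paper does not pair the equations with differences directly; it proves two separate statements: (a) the crossed energy \emph{inequality} of Lemma~\ref{compare}, whose proof requires double mollification in space and time, a case distinction between $\alpha\le 1$ and $\alpha>1$, and in the latter case a bootstrap showing that the cross terms $\int m_\delta\,\bigl(|D\tilde u|^\beta(\tilde m+\mu)^{-\alpha}\bigr)\star\rho_\delta$ are uniformly bounded and equi-integrable; and (b) the energy \emph{equality} for an arbitrary weak solution (Lemma~\ref{enidweak}), proved by renormalization (Kato's inequality applied to $\min(u,k)$ and the renormalized form $S_n(m)$ of the Fokker--Planck equation --- note that the energy identity is not part of Definition~\ref{weaksol} and must be derived). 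Combining the two inequalities from (a) with the two equalities from (b) yields (\ref{eq:42}). These two lemmas are the heart of the uniqueness proof; flagging them as ``delicate'' does not supply them. A minor point: in the non-singular case the conclusion is $Du=D\tilde u$ a.e.\ in all of $Q_T$, not merely on $\{m>0\}$.
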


\subsection{Case of singular congestion}
\label{sec:case-sing-cong}

Here we consider the limit case $\mu=0$ in assumptions \rife{coerc}--\rife{grow}, which corresponds to the singular congestion model $H= \frac{|Du|^\beta}{m^\alpha}$. Due to the singularity, we will only find a relaxed solution. \\
Before defining weak solutions, we  notice that    
we can always change $H(t,x,m,p)$ into $H(t,x,m,p)+c_1$ and $u$ into $u-c_1(T-t)$; therefore, in view of (\ref{coerc}), we may  assume without restriction that
\begin{equation}
  \label{eq:41}
H(t,x,0,p)\ge 0.
\end{equation}
This allows us to define weak solutions as follows:
\begin{definition}\label{defsing} A pair $(u,m)\in \parelle1\times \parelle1_+$ is a weak solution  to (\ref{MFGu})-(\ref{MFGbc})  if
\vskip0.3em
(i) $m\in C([0,T];\elle1)$, $u\in L^q(0,T; W^{1,q}(\Omega))$ for every $q<\frac{N+2}{N+1}$,
\vskip0.3em
(ii) 
\begin{equation}\label{mDu}
\begin{split}
&F(t,x,m)m\in \parelle1\,, \qquad G(x,m(T))m(T)\in \elle1\,,\\
&   m\, \mathds{1}_{m>0} \,\frac{|Du|^\beta}{m^\alpha}\in \parelle1\,, \qquad   \mathds{1}_{m>0} \frac{|Du|^\beta}{m^\alpha}\in \parelle1\,, 
\qquad m\in \parelle{1+\frac\alpha{\beta-1}},\,
\\
&  Du=0 \quad \hbox{a.e. in $\{m=0\}$,}
\end{split}
\end{equation}
(iii) $u\in \limitate1$ is bounded below  and is a subsolution of the Bellman equation:
\begin{equation}
  \label{eq:38}
  \begin{split}
    \int_0^T \into u\, \vfi_t\, dxdt & - \nu\int_0^T \into u\,\Delta\vfi\, dxdt 
+ \int_0^T\into H(t,x,m,Du)\mathds{1}_{\{m>0\}}\vfi\, dxdt
\\
& \leq  \int_0^T \into F(t,x,m)\vfi\, dxdt + \into G(x,m(T))\vfi(T)\, dx 
  \end{split}
\end{equation}
 for every nonnegative $\vfi\in C^\infty_c((0,T]\times \Omega)$.

(iv) $m$ is a solution of the Kolmogorov equation:
\begin{equation}
  \label{eq:37}
\int_0^T\into m\left\{ -\vfi_t-\nu\Delta \vfi+H_p(t,x,m,Du)\mathds{1}_{\{m>0\}}\, D\vfi\right\}\, dxdt =\into m_0\,\vfi(0)\, dx
\end{equation}
for every $\vfi\in C^\infty_c([0,T)\times \Omega)$.

(v) The following identity is satisfied:
\begin{equation}\label{en-id}
\begin{split}
 \into   m_0\, u(0)\, dx & = \into G(x,m(T))\,   m(T)\,dx + \parint F(t,x,m)  m\, dxdt
\\ & + \parint m\,  \left[   H_p(t,x,  m, D  u)\cdot Du-  H(t,x,m,Du)\right] \mathds{1}_{\{m>0\}}  dxdt
\end{split}
\end{equation}
where the first term is understood as the trace of $\into u(t)\, m_0\, dx$ in $BV(0,T)$, in view of Lemma \ref{sec:case-non-singular}.
\end{definition} 

Notice that, if $\alpha\leq 1$,  we actually have $m^{1-\alpha} |Du|^\beta\in \parelle1$ and the restriction to the set $\{m>0\}$ in \rife{eq:37}--\rife{en-id} is not needed. However, for the case $\alpha>1$ this restriction applies.
\vskip0.3em
We state below the  existence and uniqueness result which we obtain for the case of singular congestion. 
\begin{Theorem}\label{sec:case-sing-cong-1}
Assume that  (\ref{eq:1})--\rife{conv} hold with $\mu=0$ and with either $\beta<2 $ and $0< \alpha\le   \frac {4(\beta-1)}\beta$ or  $\beta=2 $ and $0< \alpha<2$, and  that  (\ref{F1})--(\ref{G2}) hold true (assuming furthermore the nonrestrictive condition~(\ref{eq:41})). Then there exists a  weak solution of problem (\ref{MFGu})-(\ref{MFGbc}).\\
Furthermore, if  condition \rife{eq:44} holds true  
   for any $m> 0$, $z> -m$,   $p,r \in \R^N$ such that $(z,r)\not=(0,0)$,  and if 
 \begin{equation}
   \label{eq:49}
m>0\Rightarrow F(t,x,m)>F(t,x,0),
 \end{equation}
then there is a unique weak solution of problem (\ref{MFGu})-(\ref{MFGbc}). 
\end{Theorem}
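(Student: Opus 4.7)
The approach is to recover the singular case $\mu=0$ as the limit $\mu_n\to 0^+$ of the non-singular problems treated in Theorem \ref{sec:case-non-singular-5}, while carefully tracking the behavior on the degenerate set $\{m=0\}$; uniqueness is then obtained by a Lasry--Lions duality argument adapted to the relaxed formulation. First I would fix $\mu_n\to 0^+$ and invoke Theorem \ref{sec:case-non-singular-5} to produce weak solutions $(u_n,m_n)$ of the non-singular MFG system with parameter $\mu_n$, which in particular satisfy the energy identity
\begin{multline*}
\into u_n(0)\, m_0\, dx = \into G(x,m_n(T))\, m_n(T)\, dx + \parint F(t,x,m_n)\, m_n\, dxdt \\
+ \parint m_n\bigl[H_p(t,x,m_n,Du_n)\cdot Du_n - H(t,x,m_n,Du_n)\bigr]\, dxdt.
\end{multline*}
Combining this identity with \rife{conv}, \rife{F1}--\rife{G2}, \rife{F0}--\rife{G0}, and an upper bound on $u_n(0)$ obtained from Bellman comparison (available as an equality when $\mu_n>0$), standard manipulations give uniform bounds on $m_n f(m_n)$ in $\parelle1$, on $m_n(T)g(m_n(T))$ in $\elle1$, on $m_n$ in $\parelle{1+\alpha/(\beta-1)}$, and on $m_n|Du_n|^\beta/(m_n+\mu_n)^\alpha$ in $\parelle1$. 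Using the algebraic inequality \rife{hp2} these controls transfer to $\parelle1$ bounds on $|Du_n|^\beta/(m_n+\mu_n)^\alpha$ and on the Kolmogorov drift $m_n H_p(t,x,m_n,Du_n)$, and the parabolic regularity tools of \cite{Parma} recalled in Section \ref{sec:main-results} then yield uniform bounds on $u_n$ in $L^\infty(0,T;\elle1)\cap L^q(0,T;W^{1,q}(\Omega))$ for every $q<(N+2)/(N+1)$.

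\emph{Passage to the limit.} Up to a subsequence $m_n\to m$ strongly in $\parelle1$ (and $m_n(T)\to m(T)$ in $\elle1$), $u_n\to u$ a.e., and $Du_n\rightharpoonup Du$ weakly in the natural space; the pointwise convergence $Du_n\to Du$ a.e.\ on $\{m>0\}$ is obtained via the convexity/monotonicity lemmas of Section~5. The new feature of the singular case is that the uniform bound on $m_n|Du_n|^\beta/(m_n+\mu_n)^\alpha$ combined with $m_n\to m$ a.e.\ forces, via Fatou, $Du=0$ a.e.\ on $\{m=0\}$, verifying the last part of \rife{mDu}; the quantitative restriction $\alpha<2$ when $\beta=2$ is used precisely to prevent borderline loss of integrability at this step. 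The Kolmogorov equation passes to the limit in the equality form \rife{eq:37} once the drift is identified; the Bellman equation survives only as the subsolution inequality \rife{eq:38}, because of possible upward concentration of $H(t,x,m_n,Du_n)\mathds{1}_{\{m_n>0\}}$ on $\{m=0\}$, with the correct sign guaranteed by \rife{eq:41} and lower semicontinuity of convex integrals. Finally, the energy identity \rife{en-id} is recovered by passing to the limit in the approximate one: Fatou applied to the nonnegative integrand $m_n(H_p\cdot Du_n-H)$ (nonnegative by \rife{eq:2}) together with convexity of $F(m)m$ and $G(m)m$ (ensured by \rife{F1}--\rife{G2}) yields one inequality; the reverse inequality follows by testing \rife{eq:38} against $m$ and \rife{eq:37} against $u$ in the limit solution, after a standard regularization of the test functions.

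\emph{Uniqueness.} Given two weak solutions $(u,m)$ and $(\tilde u,\tilde m)$, the plan is a Lasry--Lions cross test: apply the subsolution \rife{eq:38} for $u$ to the nonnegative test function $\tilde m$, the Kolmogorov equation \rife{eq:37} for $\tilde m$ to $u$, and the symmetric pair obtained by exchanging the roles of the two solutions; the energy identities \rife{en-id} for each solution then reconcile the trace terms at $t=0,T$. The resulting inequality reads
\begin{multline*}
0 \geq \parint \Phi(t,x)\, dxdt + \parint (m-\tilde m)\bigl(F(t,x,m)-F(t,x,\tilde m)\bigr)\, dxdt \\
+ \into (m(T)-\tilde m(T))\bigl(G(x,m(T))-G(x,\tilde m(T))\bigr)\, dx,
\end{multline*}
where $\Phi(t,x)$ is (up to the $F$-contribution separated out) the increment $h(1)-h(0)$ of the function $h$ from \rife{eq:44} evaluated along the segment $s\mapsto(\tilde m+s(m-\tilde m),\, D\tilde u+s(Du-D\tilde u))$. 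Hypothesis \rife{eq:44} makes $\Phi\geq 0$ with strict inequality wherever $(m,Du)\neq(\tilde m,D\tilde u)$ and at least one of $m,\tilde m$ is positive, while monotonicity of $F,G$ (from \rife{F1}, \rife{G1}) makes the other two terms nonnegative. Thus each integrand must vanish a.e.: on the set where both densities are positive this forces $(m,Du)=(\tilde m,D\tilde u)$, but \rife{eq:44} says nothing on the symmetric difference $\{m=0\}\triangle\{\tilde m=0\}$. Here \rife{eq:49} comes in by making the $F$-term strictly positive on this set, forcing it to have zero measure. Hence $m=\tilde m$ everywhere; the Bellman equation on $\{m>0\}$, which becomes an honest linear PDE for $u$ with a fixed right-hand side once $m$ is known, together with parabolic comparison on $\{m=0\}$ then yields $u=\tilde u$.

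\emph{Main obstacle.} The heart of the matter is the joint passage to the limit on $\{m=0\}$: one must simultaneously rule out upward concentration of $H(t,x,m_n,Du_n)$ that would break \rife{eq:38}, recover $Du=0$ as required by \rife{mDu}, and preserve \rife{en-id} with equality; this interplay is precisely what forces the relaxed Definition~\ref{defsing}. On the uniqueness side, the fact that (iii) is only a subsolution confines the cross test to nonnegative test functions, so \rife{eq:44} leaves a gap on the symmetric difference of supports, and hypothesis \rife{eq:49} is exactly the strict monotonicity needed to fill it.
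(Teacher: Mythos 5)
Your overall route is indeed the paper's: existence by letting $\mu\to 0^+$ in the non-singular problems of Theorem \ref{sec:case-non-singular-5}, uniqueness by crossed duality inequalities reconciled with the energy identities, with \rife{eq:49} eliminating the symmetric difference of the supports. But two steps that you treat as routine are precisely where the work lies, and as written they are gaps.

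First, ``the Kolmogorov equation passes to the limit once the drift is identified'' begs the question when $\alpha\ge 1$. There $m^\mu H_p(t,x,\mu+m^\mu,Du^\mu)$ behaves like $(m^\mu)^{1-\alpha}|Du^\mu|^{\beta-1}$, which is singular as $m^\mu\to 0$, so a.e.\ convergence plus Vitali identifies the weak $L^1$ limit $\xi$ only on $\{m>0\}$; one must still prove $\xi=0$ a.e.\ on $\{m=0\}$. The paper does this by testing against $\phi(m^\mu)$ with $\phi$ first supported in $[\delta,+\infty)$ and then $\phi(m)=e^{-Km}$, $K\to\infty$, using \rife{grow}, \rife{eq:34} and $\alpha<\beta$ --- and this is where the restriction $\alpha<2$ when $\beta=2$ actually enters, not in proving $Du=0$ on $\{m=0\}$, which is a soft Fatou argument. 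Without this step \rife{eq:37} is not established. Relatedly, the ``reverse inequality'' for the energy identity and the crossed inequality \rife{ineq-s} are not a ``standard regularization of test functions'': since $m$ is only in $\parelle{1+\alpha/(\beta-1)}$ and $Du$ is only controlled through $m^{-\alpha}|Du|^\beta\mathds{1}_{\{m>0\}}$, one needs the double space-time mollification of Lemmas \ref{compare}--\ref{compare-s} plus, for $\alpha>1$, an a priori bound on the crossed terms $\iint m_\de\,\big(\tm^{-\alpha}|D\tu|^\beta\mathds{1}_{\{\tm>0\}}\big)\star\rho_\de$ obtained by a bootstrap from the inequality itself.

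Second, your last step of uniqueness fails as stated. Once $m=\tilde m$ and $Du=D\tilde u$ a.e., the Bellman relation available is only the subsolution inequality \rife{eq:38}; both $u$ and $\tilde u$ are then subsolutions of the same linear problem, and subsolutions are not unique, so ``parabolic comparison'' gives nothing (nor is comparison ``on $\{m=0\}$'', a mere measurable set, meaningful). The paper closes this with Lemma \ref{newlem}: every weak solution satisfies the energy identity localized at a.e.\ time $t$, i.e.\ $\into m(t)u(t)\,dx$ equals the energy expression on $(t,T)$. Since $u-\tilde u$ depends only on $t$, subtracting the two localized identities yields $\into m(t)(u-\tilde u)(t)\,dx=0$, hence $u=\tilde u$. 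You need this lemma, or an equivalent substitute, to conclude.
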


\section{ Non singular congestion: approximations of~(\ref{MFGu})-(\ref{MFGbc})}
\label{sec:non-sing-cong-6}
For $\mu>0$, we consider the following system of PDEs:
\begin{eqnarray}
\label{eq:3u}
 -\partial_t u^\epsilon - \nu \Delta u^\epsilon +
 H(t,x, T_{1 /\epsilon} m^\epsilon,Du^\epsilon)  = F^\epsilon(t,x,m^\epsilon)\,,
  & \quad (t,x)\in (0,T)\times \Omega
  \\\label{eq:3m}
  \partial_t m^\epsilon  -\nu \Delta m^\epsilon -{\rm div} (m^\epsilon H_p(t,x,T_{1/ \epsilon} m^\epsilon,Du^\epsilon))=0\,, &\quad  (t,x)\in (0,T)\times \Omega
  \\\label{eq:3bc}
  m^\epsilon(0,x)=m_0^\epsilon(x)\,,\,\,u^\epsilon(T,x)=  G^\epsilon(x,m^\epsilon(T))\,, & \quad x\in \Omega\, 
\end{eqnarray}
where $T_{1/\epsilon}  m=  \min ( m,1/\epsilon)$, 
 $ F^\epsilon(t,x,m)=  \rho^\epsilon \star F( t,\cdot, \rho^\epsilon \star m) )   (x)$, 
$G^\epsilon(x,m)=  \rho^\epsilon \star G( \cdot, \rho^\epsilon \star m) )   (x)$,  $m_0^\epsilon= \rho^\epsilon \star m_{0}$. Here 
$\star$ denotes the convolution with respect to the spatial variable 
 and $\rho^\epsilon$ is a standard symmetric mollifier, i.e. $\rho^\epsilon(x)= \frac 1 {\epsilon^N} \rho(\frac x \epsilon)$ for  a  nonnegative function $\rho\in C_c^\infty(\R^N)$ such that
$\int_{\R^N} \rho(x)dx=1$.
\begin{lemma}
  \label{sec:non-sing-cong}
There exists a weak solution $(u^\epsilon,m^\epsilon)$ of  (\ref{eq:3u})-(\ref{eq:3bc}) such that 
$u^\epsilon\in C^{1+\alpha, 1/2+\alpha/2}(\bar Q_T)$, $m^\epsilon\in C^{\alpha, \alpha/2}(\bar Q_T)$, and $m^\epsilon\ge 0$, $ \int_\Omega m^\epsilon(t,x) dx=\int_\Omega m_0^\epsilon(x) dx$,  $\forall t$.
\end{lemma}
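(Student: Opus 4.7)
The plan is to produce $(u^\epsilon, m^\epsilon)$ by a Leray--Schauder fixed point argument. Given a candidate density $\tilde m \in C(\bar Q_T)$ with $\tilde m \ge 0$ and $\int_\Omega \tilde m(t, \cdot)\,dx = \int_\Omega m_0^\epsilon\,dx$, I define a map $\Phi:\tilde m \mapsto m$ in two steps: first, solve the decoupled Bellman equation
\begin{equation*}
-u_t - \nu \Delta u + H(t,x, T_{1/\epsilon}\tilde m, Du) = F^\epsilon(t,x,\tilde m),\qquad u(T,\cdot) = G^\epsilon(\cdot, \tilde m(T)),
\end{equation*}
and second, use $Du$ as drift in the forward linear Kolmogorov equation
\begin{equation*}
m_t - \nu \Delta m - \dive\bigl(m\, H_p(t,x, T_{1/\epsilon}\tilde m, Du)\bigr) = 0,\qquad m(0,\cdot) = m_0^\epsilon,
\end{equation*}
obtaining $m$. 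A fixed point of $\Phi$ yields the sought pair.

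For the HJB step, the truncation $T_{1/\epsilon}$ together with $\mu > 0$ ensures that $H(t,x, T_{1/\epsilon}\tilde m, p)$ is a smooth convex function of $p$ with coefficients bounded in terms of $\epsilon, \mu$ only, and with at most $|p|^\beta$, $\beta \le 2$, growth, i.e.\ natural quadratic growth. Since $F^\epsilon(\cdot,\tilde m)$ and $G^\epsilon(\cdot,\tilde m(T))$ are smooth with bounds depending on $\|\tilde m\|_\infty$, the comparison principle yields an $L^\infty$ bound on $u$, and the classical theory of quasilinear parabolic equations with natural growth (Ladyzhenskaya--Solonnikov--Ural'tseva) furnishes a unique classical solution $u \in C^{2+\alpha, 1+\alpha/2}(\bar Q_T)$, together with H\"older estimates on $Du$.

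For the Kolmogorov step, the bounded H\"older regularity of $Du$ combined with the truncation makes the vector field $b = H_p(t,x, T_{1/\epsilon}\tilde m, Du)$ bounded and H\"older continuous on $\bar Q_T$. The Kolmogorov equation is then linear parabolic with smooth initial datum $m_0^\epsilon$; classical theory delivers a unique solution $m \in C^{2+\alpha, 1+\alpha/2}(\bar Q_T)$. Nonnegativity follows from the weak maximum principle applied to the expanded form $m_t - \nu \Delta m + b \cdot Dm + m\,\dive(b) = 0$, and mass conservation follows by integrating in space and using the periodic boundary conditions.

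The main obstacle is verifying the hypotheses of the fixed point theorem: the estimates on $u$, and hence on $m$, depend on $\|\tilde m\|_\infty$, so that a convex compact set invariant under $\Phi$ is not immediate for Schauder's theorem. I would bypass this by invoking Leray--Schauder in, say, $C(\bar Q_T)$: introduce the homotopy $\Phi_\lambda$ ($\lambda\in[0,1]$) obtained by scaling $F^\epsilon, G^\epsilon$ and the drift $H_p$ by $\lambda$ (so that $\Phi_0$ maps everything to the heat evolution of $m_0^\epsilon$), and establish a uniform a priori $L^\infty$ bound on any fixed point of $\Phi_\lambda$. Such a bound follows from the boundedness and smoothness (in $\epsilon$) of all coefficients entering the HJB, the maximum-principle bound for $u$, and then parabolic estimates for the linear advection-diffusion Kolmogorov equation under the normalization $\int m\,dx = 1$. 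Continuity and compactness of $\Phi_\lambda$ follow from the H\"older bounds above and Ascoli's theorem, and Leray--Schauder then produces a fixed point at $\lambda=1$ with the claimed regularity.
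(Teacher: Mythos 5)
Your two-step decoupling (Bellman with frozen $\tilde m$, then linear Kolmogorov with the resulting drift) is exactly the paper's strategy, but the ``main obstacle'' you identify is not actually there, and your way around it hides the real point. The approximation is built precisely so that all estimates are uniform over the set $X=\{m\in C(\bar Q_T): m\ge 0,\ \int_\Omega m(t,\cdot)=\int_\Omega m_0\}$: since $F^\epsilon(t,x,m)=\rho^\epsilon\star F(t,\cdot,\rho^\epsilon\star m)$ and $G^\epsilon$ are defined through convolution, the fixed $L^1$ mass gives $\|\rho^\epsilon\star m\|_\infty\le \|\rho^\epsilon\|_\infty\int_\Omega m_0$, so $F^\epsilon(\cdot,\tilde m)$ and $G^\epsilon(\cdot,\tilde m(T))$ are bounded \emph{independently} of $\tilde m\in X$ (not merely in terms of $\|\tilde m\|_\infty$), and the truncation $T_{1/\epsilon}\tilde m$ in $H$ gives $|H(t,x,T_{1/\epsilon}\tilde m,Du)|\le C_\epsilon(1+|Du|^2)$ with $C_\epsilon$ independent of $\tilde m$. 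Hence $\|u\|_{L^\infty(0,T;W^{1,\infty})}$, and then the $C^{\alpha,\alpha/2}$ bound on $m$, are uniform over $X$, the image $\Phi(X)$ is precompact in $C(\bar Q_T)$ by Ascoli, and plain Schauder on the closed convex set $X$ suffices. Your Leray--Schauder homotopy is not wrong, but the a priori bound you invoke for its fixed points is exactly this uniform bound, which you simultaneously assert and deny; as written the argument is circular rather than a repair.

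A second, genuine defect is the regularity you claim. Under the standing assumptions $H$ and $F$ are only \emph{measurable} in $(t,x)$ and $H$ is only continuous in $m$ and $C^1$ in $p$, so the Bellman solution is $C^{1+\alpha,1/2+\alpha/2}$ (via the $C^{1,\alpha}$ gradient estimates of Ladyzhenskaya--Solonnikov--Ural'tseva), not $C^{2+\alpha,1+\alpha/2}$, and the drift $b=H_p(t,x,T_{1/\epsilon}\tilde m,Du)$ is merely bounded measurable, not H\"older. Consequently you cannot expand the Kolmogorov equation into non-divergence form $m_t-\nu\Delta m+b\cdot Dm+m\,\mathrm{div}(b)=0$ ($\mathrm{div}(b)$ need not exist), nor obtain $m\in C^{2+\alpha,1+\alpha/2}$. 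One must work with the divergence-form equation with bounded coefficients, which yields existence, uniqueness and a uniform $C^{\alpha,\alpha/2}$ bound, with nonnegativity and mass conservation read off from the weak formulation; this weaker regularity is what the lemma asserts and is all that is available.
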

\begin{proof} 
 We set  $X=\left\{m\in C(\bar Q_T):  m\ge 0 \hbox{ and }  \int_\Omega m(t,x) dx= \int_\Omega m_0(x) dx,\quad \forall t\right\}$.
For any $m\in X$, the boundary value problem
\begin{eqnarray}
\label{sec:non-sing-cong-7} -\partial_t u- \nu \Delta u +
 H(t,x, T_{1 /\epsilon} m,Du)  = F^\epsilon(t,x,m)\,,
  & \quad (t,x)\in (0,T)\times \Omega
  \\
\label{sec:non-sing-cong-9}  u(T,x)=  G^\epsilon(x,m(T))\,, & \quad x\in \Omega\, 
\end{eqnarray}
has a unique solution $u\in  L^\infty(0,T; W^{1,\infty}(\Omega))$ (see e.g. \cite{LSU}, Chapter 5, Theorem 6.3) and $\|u\|_{L^\infty(0,T; W^{1,\infty}(\Omega))}$ is bounded independently of $m$ in $X$. This follows e.g. from the $C^{1,\alpha}$ estimates in \cite{LSU}, Chapter 5, Theorem 3.1, since $F^\epsilon$ is bounded and $ |H(t,x, T_{1 /\epsilon} m,Du) | \leq C_\epsilon (1+ |Du|^2)$ for some constant $C_\epsilon>0$.
In addition, the map $m\mapsto u$ is continuous from $X$ to $ L^\infty(0,T; W^{1,\infty}(\Omega))$.
\\
Thus, the boundary value problem
\begin{eqnarray*}
    \partial_t \tilde m  -\nu \Delta \tilde m -{\rm div} (\tilde m  H_p(t,x,T_{1/ \epsilon} m^\epsilon,Du))=0\,, &\quad  (t,x)\in (0,T)\times \Omega
  \\
  \tilde m(0,x)=m_0^\epsilon(x)\,, & \quad x\in \Omega
\end{eqnarray*}
has a unique solution $\tilde m\in X\cap C^{\alpha, \alpha/2}(\bar Q_T)$, which is bounded in $ C^{\alpha, \alpha/2}(\bar Q_T)$ uniformly with respect to $m\in X$, see e.g. \cite{LSU}, Chapter 3, Theorem 10.1. Moreover the map $m\mapsto \tilde m$ is continuous from $ X$ to $C^{\alpha, \alpha/2}(\bar Q_T)$.
\\
Hence, Schauder's  theorem implies that the map $m\mapsto \tilde m$ has a fixed point $m^\epsilon$.  
\end{proof}

\begin{lemma}
\label{sec:non-sing-cong-1}
Let $(u^\epsilon, m^\epsilon)$ be a solution  of system  (\ref{eq:3u})-(\ref{eq:3bc}).  Then
\begin{eqnarray}
\label{eq:21}u^\epsilon (t,x)\ge c_4 ,\\
  \label{eq:11}
\into G^\epsilon(x,m^\epsilon(T))m^\epsilon(T) +  \parint F^\epsilon(t,x,m^\epsilon)m^\epsilon\, dxdt  
 + \|(T_{1/\epsilon} m^\epsilon)^{\frac\alpha{\beta-1}+1}\|_{\parelle{\frac{N+2}N}}
\leq C,
\\
\label{eq:12}
 \parint m^\epsilon \left\{H_p(t,x, T_{1/\epsilon} m^\epsilon,Du^\epsilon)\cdot Du^\epsilon- H(t,x,T_{1/\epsilon} m^\epsilon,Du^\epsilon)\right\}\,dxdt \leq C,
\\\label{est-mDu}
\parint \frac{|Du^\epsilon|^\beta}{( T_{1/\epsilon} m^\epsilon+\mu)^\alpha} \, dxdt +
\parint m^\epsilon\, \frac{|Du^\epsilon|^\beta}{( T_{1/\epsilon} m^\epsilon+\mu)^\alpha} \, dxdt   \leq C,
\\\label{eq:32}
\|u^\epsilon\|_{L^\infty(0,T; L^1(\Omega))}\le C, 
\end{eqnarray}
for a positive constant  $C =C(T, H, F, G,  \|m_0\|_\infty)$ and where $c_4$ is the constant appearing in (\ref{F0}) and (\ref{G0}).
\end{lemma}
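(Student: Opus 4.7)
The plan combines three ingredients: classical parabolic comparison for the lower bound, a global energy identity obtained by pairing the two equations, and a loop-closing argument exploiting \eqref{hp2} together with parabolic regularity for the Kolmogorov equation.

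First I establish \eqref{eq:21} by comparison. Setting $\tilde u = u^\epsilon - c_4$, one checks that $\tilde u$ satisfies $-\partial_t\tilde u - \nu\Delta\tilde u + H(\cdot,T_{1/\epsilon}m^\epsilon,D\tilde u) = F^\epsilon - c_4 \ge 0$ with $\tilde u(T) = G^\epsilon - c_4 \ge 0$, while $w\equiv 0$ is a subsolution because $H(t,x,m,0) \le 0$ by \eqref{eq:1}. Classical parabolic comparison, applicable thanks to the $C^{1+\alpha}$ regularity granted by Lemma~\ref{sec:non-sing-cong}, then yields $\tilde u \ge 0$.

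Next I would derive a global energy identity by multiplying \eqref{eq:3u} by $m^\epsilon$ and \eqref{eq:3m} by $u^\epsilon$, integrating over $Q_T$ and integrating by parts: the Laplacian terms cancel while the time derivatives combine to give
$$\into m_0^\epsilon u^\epsilon(0) = \into G^\epsilon(x,m^\epsilon(T))m^\epsilon(T) + \parint F^\epsilon m^\epsilon + \parint m^\epsilon\bigl[H_p\cdot Du^\epsilon - H\bigr].$$
By \eqref{eq:2}, \eqref{F0}, \eqref{G0}, every term on the right is nonnegative modulo a controlled multiple of the conserved mass $\into m^\epsilon = 1$. In parallel, testing \eqref{eq:3u} against the constant $1$ in $x$ gives the scalar ODE $-\frac{d}{dt}\into u^\epsilon + \into H = \into F^\epsilon$, and the coercivity \eqref{coerc} bounds $\into H$ below, producing $\into u^\epsilon(t) \le \into G^\epsilon + \int_t^T\into F^\epsilon + c_1(T-t)|\Omega| + c_1\parint (T_{1/\epsilon}m^\epsilon)^{\alpha/(\beta-1)}$. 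Using $m_0^\epsilon \le \|m_0\|_\infty$, the upper growth bounds \eqref{F1}, \eqref{G2}, and Jensen's inequality (exploiting convexity of $sf(s)$ and $sg(s)$ to dominate integrals of $(\rho^\epsilon\star m^\epsilon)f(\rho^\epsilon\star m^\epsilon)$ by $\int m^\epsilon f(m^\epsilon)$), these two displays combine into
$$\parint F^\epsilon m^\epsilon + \into G^\epsilon m^\epsilon(T) + \parint m^\epsilon[H_p\cdot Du^\epsilon - H] \;\le\; C + C\parint (T_{1/\epsilon}m^\epsilon)^{\alpha/(\beta-1)}. \qquad (\star)$$

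The hard part is closing the loop: the tail $\parint(T_{1/\epsilon}m^\epsilon)^{\alpha/(\beta-1)}$ in $(\star)$ must be absorbed by the $L^{(N+2)/N}$ bound stated in \eqref{eq:11}. Applying \eqref{hp2} pointwise and integrating yields $\parint (T_{1/\epsilon}m^\epsilon)^{\alpha/(\beta-1)+1}|H_p|^2 \le C + C\parint m^\epsilon[H_p\cdot Du^\epsilon - H]$. Then, testing \eqref{eq:3m} against a (truncated, smoothed) power of $m^\epsilon$, integrating by parts and using the parabolic Sobolev embedding $L^\infty(0,T;L^2)\cap L^2(0,T;H^1) \hookrightarrow L^{2(N+2)/N}(Q_T)$ applied to $(T_{1/\epsilon}m^\epsilon)^{(\alpha/(\beta-1)+1)/2}$ should deliver the bound $\|(T_{1/\epsilon}m^\epsilon)^{\alpha/(\beta-1)+1}\|_{\parelle{(N+2)/N}} \le C\bigl(1 + \parint m^\epsilon[H_p\cdot Du^\epsilon - H]\bigr)$. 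H\"older and Young's inequalities then absorb the tail of $(\star)$ into the left-hand side, delivering \eqref{eq:11} and \eqref{eq:12}. From here, \eqref{est-mDu} follows by combining \eqref{coerc} and \eqref{conv} into $\sigma H \le H_p\cdot Du^\epsilon - H + C(1+m^{\alpha/(\beta-1)})$, which bounds the weighted term in \eqref{est-mDu}; the unweighted version uses $\mu>0$ and the now-available control on the right of $(\star)$. Finally, \eqref{eq:32} is immediate from the ODE for $\into u^\epsilon(t)$ together with the established bounds and the lower bound \eqref{eq:21}. The parabolic-regularity step for the Kolmogorov equation, which requires careful use of \eqref{hp2} and the structural restriction \eqref{alfabeta}, is the main technical obstacle, and is a careful adaptation of the techniques of \cite{Parma}.
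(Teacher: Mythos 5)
Your proposal is correct and follows essentially the same route as the paper: comparison for the lower bound \eqref{eq:21}, the cross-testing energy identity combined with the integrated Hamilton--Jacobi equation and the splitting $F\le f_L+\frac mL(F+|c_4|)$ to reach $(\star)$, then testing the Kolmogorov equation with $(T_{1/\epsilon}m^\epsilon)^{\alpha/(\beta-1)}$, \eqref{hp2} and Gagliardo--Nirenberg to absorb the tail. The only glossed detail is the weighted term in \eqref{est-mDu}: since $m^\epsilon$ itself is untruncated, bounding $\parint m^\epsilon (T_{1/\epsilon}m^\epsilon)^{\alpha/(\beta-1)}$ requires the $\sup_t\into I_\epsilon(m^\epsilon(t))$ control that your Kolmogorov-testing step produces as a byproduct (the paper's \eqref{eq:7}), not the $L^{(N+2)/N}$ bound or the hypothesis $\mu>0$ (the paper's constants are $\mu$-independent, and the unweighted term already comes from your integrated HJB display).
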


\proof 
We multiply the equation of $u^\epsilon$ by $m^\epsilon$, the equation of $m^\epsilon$ by $u^\epsilon$, integrate by parts some terms and
subtract; we get
\begin{align*}& 
\into u^\epsilon(0)m_0^\epsilon\, dx - \into G^\epsilon(x,m^\epsilon(T))m^\epsilon(T) 
- \parint m^\epsilon H_p(t,x, T_{1/\epsilon}m^\epsilon,Du^\epsilon)\cdot Du^\epsilon\,dxdt
\\
& \quad + \parint m^\epsilon\, H(t,x, T_{1/\epsilon}m^\epsilon,Du^\epsilon)\,dxdt = \parint F^\epsilon(t,x,m^\epsilon)m^\epsilon\, dxdt,
\end{align*}
which implies that
\begin{equation}\label{est1}
\begin{split}
& 
\into G^\epsilon(x,m^\epsilon(T))m^\epsilon(T) +  \parint F^\epsilon(t,x,m^\epsilon)m^\epsilon\, dxdt  +\into u^\epsilon(0) ^- m_0 \epsilon\, dx
\\
& +   \parint m^\epsilon \left\{H_p(t,x,T_{1/\epsilon} m^\epsilon,Du^\epsilon)\cdot Du^\epsilon- H(t,x,T_{1/\epsilon} m^\epsilon,Du^\epsilon)\right\}\,dxdt 
\\
\le \, & C\, \| m_0\|_{\elle\infty} \into [u^\epsilon(0)]^+\, dx\,.
\end{split}
\end{equation}
By using  (\ref{eq:1}) and $F,G\geq c_4$, we see by comparison that
$u^\epsilon(t)\ge  c_4 + c_4(T-t)$,  and therefore that there exists an absolute constant $C$ 
such that 
\begin{equation}
  \label{eq:5}
[u^\epsilon(0)]^- \le C.
\end{equation}
Integrating (\ref{eq:3u}) and using (\ref{coerc}) and (\ref{eq:5}), we obtain that 
\begin{equation}\label{est2}
  \begin{split}
&\into [u^\epsilon(0)]^+\, dx + c_0 \parint \frac{|Du^\epsilon|^\beta}{( T_{1/\epsilon} m^\epsilon+\mu)^\alpha} \, dxdt\\ \leq & c_1 \parint (1+ (T_{1/\epsilon} m^\epsilon) ^{\frac\alpha{\beta-1}})dxdt + \parint F^\epsilon(t,x,m^\epsilon)dxdt +  \into G^\epsilon(x,m^\epsilon(T))dx     +C 
\\
= &  c_1 \parint (1+ (T_{1/\epsilon} m^\epsilon) ^{\frac\alpha{\beta-1}})dxdt + \parint F(t,x,\rho^\epsilon \star m^\epsilon)dxdt +
  \into G(x, \rho^\epsilon\star m^\epsilon(T))dx     +C \\
  \end{split}
\end{equation}
{From} Assumption  (\ref{F1}) we see that,   for any choice of $L>0$,
\be\label{magf} 
F(t,x,m)\leq  f_{L}(t,x)+ \frac m {L}   (F(t,x,m) + |c_4|), 
\ee
where $c_4$ is the constant in \rife{F0} and $ f_{L}(t,x)=\max_{0\le m\le L} F(t,x,m)\le  \frac 1 \lambda \max_{0\le m\le L} f(m) + \kappa$, from (\ref{F1}).

Similarly, $G(x,m)= \leq  g _L(x)+ \frac m L  ( G(x,m) + |c_4|)$  where $g_L(x)=\max_{0\le m\le L} G(x,m)\le \frac 1 \lambda \max_{0\le m\le L} g(m) 
+ \kappa$, from~(\ref{G2}).
Therefore, (\ref{est2}) implies
\begin{equation}\label{eq:6}
  \begin{split}
& \into [u^\epsilon(0)]^+\, dx  + c_0 \parint \frac{|Du^\epsilon|^\beta}{( T_{1/\epsilon} m^\epsilon+\mu)^\alpha} \, dxdt 
\leq   c_1 \parint (1+ (T_{1/\epsilon} m^\epsilon) ^{\frac\alpha{\beta-1}})dxdt \\ & \qquad +
\frac 1 L \left(\parint m^\epsilon  F^\epsilon(t,x,m^\epsilon)dxdt +  \into m^\epsilon(T)G^\epsilon(x,m^\epsilon(T))dx\right)     +C_L, 
\end{split}
\end{equation}
where $C_L$ is a constant depending on $L$. Therefore, by choosing $L$ sufficiently large (depending on $\|m_0\|_{\elle\infty}$)
 we can combine \rife{est1} and (\ref{eq:6}) and obtain 
\begin{equation}\label{est3}
\begin{split}& 
\into G^\epsilon(x,m^\epsilon(T))m^\epsilon(T) +  \parint F^\epsilon(t,x,m^\epsilon)m^\epsilon\, dxdt  + \parint \frac{|Du^\epsilon|^\beta}{( T_{1/\epsilon} m^\epsilon+\mu)^\alpha} \, dxdt
\\
& +   \parint m^\epsilon \left\{H_p(t,x, T_{1/\epsilon} m^\epsilon,Du^\epsilon)\cdot Du^\epsilon- H(t,x,T_{1/\epsilon} m^\epsilon,Du^\epsilon)\right\}\,dxdt 
\\ \leq \, &\, C + C \parint  ( T_{1/\epsilon} m^\epsilon )^{\frac\alpha{\beta-1}} dxdt\,.
\end{split}
\end{equation}
We now use (\ref{eq:3m}) multiplied by $( T_{1/\epsilon} m^\epsilon )^{\frac\alpha{\beta-1}} $. We get
\begin{displaymath}
  \begin{split}
  &\iint \partial_t m^\epsilon  ( T_{1/\epsilon} m^\epsilon )^{\frac\alpha{\beta-1}}  \, dxdt + 
 \frac {\nu \alpha}{\beta-1}     \iint_{m^\epsilon < 1/\epsilon}  |Dm^\epsilon |^2 (m^\epsilon)^{\frac\alpha{\beta-1}-1}\, dxdt
\\ =&
- \frac\alpha{\beta-1}\iint_{m^\epsilon < 1/\epsilon}   (m^\epsilon)^{\frac\alpha{\beta-1}}
 H_p(t,x,  m^\epsilon,Du^\epsilon) \cdot Dm^\epsilon   \, dxdt\,.    
  \end{split}
\end{displaymath}
Calling $I_\epsilon(z)=  \int_0 ^z  ( T_{1/\epsilon} y )^{\frac\alpha{\beta-1}}  dy$, we see that 
\[ \ds \int_0^t  \int_\Omega \partial_t m^\epsilon  ( T_{1/\epsilon} m^\epsilon )^{\frac\alpha{\beta-1}}  \, dxds  =     \int_\Omega
 I_\epsilon( m^\epsilon (t) )  dx    -    \int_\Omega
 I_\epsilon( m_0^\epsilon )  dx .  \]
Therefore,  by Young's inequality
\begin{equation}
  \label{eq:4}
  \begin{split}
& \sup_{t}    \int_\Omega
 I_\epsilon( m^\epsilon (t) )  dx    -    \int_\Omega
 I_\epsilon( m_0^\epsilon )  dx
+ \iint_{m^\epsilon < 1/\epsilon}  |Dm^\epsilon|^2 (m^\epsilon)^{\frac\alpha{\beta-1}-1}\, dxdt\\
\leq  \,&\,  C \iint_{m^\epsilon < 1/\epsilon}  (m^\epsilon)^{\frac\alpha{\beta-1}+1} |H_p(t,x,m^\epsilon,Du^\epsilon)|^2\, dxdt\,.    
  \end{split}
\end{equation}
Since $I_\epsilon(z)\ge    \frac 1 {\frac\alpha{\beta-1} +1} ( T_{1/\epsilon} z )^{\frac\alpha{\beta-1}+1}$ we use (\ref{hp2}) to obtain 
\begin{equation}
  \label{eq:8}
  \begin{split}
&      \sup_{t\in (0,T)}     \int_\Omega   (T_{1/\epsilon} m^\epsilon(t) )^{\frac\alpha{\beta-1} +1}   \, dx
+ \parint  \left|D  T_{1/\epsilon} m^\epsilon\right|^2 (T_{1/\epsilon} m^\epsilon)^{\frac\alpha{\beta-1}-1}\, dxdt\\
\leq \, &\,   C  \iint_{m^\epsilon < 1/\epsilon} m^\epsilon \left\{ H_p(t,x,m^\epsilon,Du^\epsilon)\cdot Du^\epsilon- H(t,x,m^\epsilon,Du^\epsilon) \right\}dxdt\\ & +
C \left[1+  \iint_{m^\epsilon < 1/\epsilon} (m^\epsilon)^{\frac\alpha{\beta-1}+1}\, dxdt\right] + C \int_\Omega   (m_0^\epsilon)^{\frac\alpha{\beta-1} +1} dx 
\\
\leq\, &\,  C \parint m^\epsilon \left\{ H_p(t,x,T_{1/\epsilon} m^\epsilon,Du^\epsilon)\cdot Du^\epsilon- H(t,x,T_{1/\epsilon} m^\epsilon,Du^\epsilon) \right\}dxdt
\\ &+C \left[1+  \parint  ( T_{1/\epsilon}  m^\epsilon)^{\frac\alpha{\beta-1}+1}\, dxdt\right] + C \int_\Omega   (m_0^\epsilon)^{\frac\alpha{\beta-1} +1} dx ,
\end{split}
\end{equation}
where the last inequality is a consequence of (\ref{eq:2}).
On the other hand, the left-hand side of (\ref{eq:8}) can be estimated by  Gagliardo-Nirenberg  interpolation inequality, which implies
\begin{displaymath}
  \begin{split}
&\|   (T_{1/\epsilon} m^\epsilon)^{\frac\alpha{\beta-1}+1}\|_{\parelle{\frac{N+2}N}} \\ \leq \, &\,  C\,  \left[\sup_{t} \into (T_{1/\epsilon} m^\epsilon (t))
^{\frac\alpha{\beta-1}+1}\,dx +
 \parint |DT_{1/\epsilon} m^\epsilon |^2  (T_{1/\epsilon} m^\epsilon)^{\frac\alpha{\beta-1}-1}\, dxdt\right]\,.      
  \end{split}
\end{displaymath}
Then, when comparing with the right-hand side of (\ref{eq:8}), and recalling that $m^\epsilon$ is already bounded in $L^1$, we conclude that
\begin{displaymath}
  \begin{split}
&\|(T_{1/\epsilon} m^\epsilon)^{\frac\alpha{\beta-1}+1}\|_{\parelle{\frac{N+2}N}}\\ \leq\, & 
\, C \parint m^\epsilon \left\{ H_p(t,x,T_{1/\epsilon} m^\epsilon,Du)\cdot Du^\epsilon- H(t,x,T_{1/\epsilon} m^\epsilon,Du^\epsilon) \right\}dxdt +
C \,,      
  \end{split}
\end{displaymath}
where $C$ also depends on $m_0$. We use now this information in \rife{est3} and we deduce that
\begin{displaymath}
\begin{split}& 
\into G^\epsilon(x,m^\epsilon(T))m^\epsilon(T) +  \parint F^\epsilon(t,x,m^\epsilon)m^\epsilon\, dxdt  
 + \|(T_{1/\epsilon} m^\epsilon)^{\frac\alpha{\beta-1}+1}\|_{\parelle{\frac{N+2}N}}
\\\leq \, &\, C + C \parint  ( T_{1/\epsilon} m^\epsilon )^{\frac\alpha{\beta-1}} dxdt\,.
\end{split}
\end{displaymath}
The latter inequality implies that its right hand side is bounded uniformly in $\epsilon$, and we obtain (\ref{eq:11}).
  From \rife{est3},  we also obtain (\ref{eq:12}) and we estimate the first term in (\ref{est-mDu}).
  
Finally, from (\ref{eq:4}), we see that $\sup_{t} \int_\Omega I_\epsilon( m^\epsilon (t) )  dx\le C$ which implies that
\begin{equation}
  \label{eq:7}
\sup_{t} \int_\Omega  m^\epsilon(t) (T_{1/\epsilon}m^\epsilon(t))^{\frac\alpha{\beta-1}} \,dx\le C.
\end{equation}
Then using (\ref{coerc}) and (\ref{conv}), (\ref{eq:12}) and (\ref{eq:7}) imply that 
\begin{equation}
  \label{eq:10}  \parint m^\epsilon \frac {  |Du^\epsilon|^\beta }{ (\mu + T_{1/\epsilon} m^\epsilon) ^\alpha} dxdt\le C,
\end{equation}
which completes (\ref{est-mDu}). In particular, we also deduce that  $  \parint m^\epsilon \frac {  |Du^\epsilon|^\beta }{ (\mu +  m^\epsilon) ^\alpha} dxdt\le C$.
\\
Finally, integrating (\ref{eq:3u}) in $(t,T)\times \Omega$, we get that
\begin{displaymath}
  \begin{split}
&  \int_\Omega u^\epsilon(t,x)dx\\=  &\int_\Omega G( x, \rho^\epsilon \star m^\epsilon(T) ) dx +\int_t^T\int_\Omega F (t,x,\rho^\epsilon \star m^\epsilon(s)) dxds - \int_t^T\int_\Omega  H(s,x,T_{1/ \epsilon}  m^\epsilon, Du^\epsilon) dx ds .    
  \end{split}
\end{displaymath}
The last term is bounded above by $C$ from (\ref{eq:11}) and (\ref{est-mDu}). 
Let us deal with $\int_\Omega G( x, \rho^\epsilon \star m^\epsilon(T) ) dx $:  from (\ref{G0}),  $G( x, \rho^\epsilon \star m^\epsilon(T) )
 \le g_L(x) + \frac 1 L \left( \rho^\epsilon \star m^\epsilon(T) G( x, \rho^\epsilon \star m^\epsilon(T) ) +|c_4| \rho^\epsilon \star m^\epsilon(T)  \right)$,
where $L$ is an arbitrary positive number and $g_L(x)=\max _{0\le m\le L } G(x,m)\le \frac 1 \lambda \max_{0\le m\le L} g(m) 
+ \kappa$, from~(\ref{G2}).
Noting that $\int_\Omega \rho^\epsilon \star m^\epsilon(T) G( x, \rho^\epsilon \star m^\epsilon(T) ) dx= \int_\Omega  m^\epsilon G^\epsilon(x, m^\epsilon) dx$,
and that $\int_\Omega m^\epsilon(x)= \int_\Omega m_0(x)$, we deduce  from  (\ref{eq:11}) that  $\int_\Omega G( x, \rho^\epsilon \star m^\epsilon(T) ) dx \le C$, for
some constant $C$ independent of $\epsilon$ and $\mu$.
\\
The same argument can be used for proving that $\int_t^T\int_\Omega F (t,x,\rho^\epsilon \star m^\epsilon(s)) dxds\le C$. Thus 
\begin{displaymath}
   \int_\Omega u^\epsilon(t,x)dx \le C.
\end{displaymath}
Combining this with the lower bound on $u^\epsilon$ already obtained, we obtain (\ref{eq:32}).
%
\qed 
\begin{remark}
  Note that the constant $C$ appearing in Lemma~\ref{sec:non-sing-cong-1} does not depend on $\mu$.
\end{remark}

\begin{lemma}
  \label{sec:non-sing-cong-2}
For each $0<\epsilon<1$, there exist two functions $w^\epsilon$ and $z^\epsilon$ defined on $Q_T$ such that 
$$
\left|m^\epsilon H_p(t,x,T_{1/\epsilon}m^\epsilon, Du^\epsilon)\right| \le c_2 m^\epsilon+  w^\epsilon + \sqrt {m ^\epsilon } z^\epsilon\,,
$$ 
where the family $(w^\epsilon)$ is bounded in $L^{\beta '} (Q_T)$, the family $(z^\epsilon )$ is bounded in  $L^2(Q_T)$ and, in addition,  relatively compact if $\beta<2$.
\end{lemma}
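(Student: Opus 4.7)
The plan is to start from the growth estimate~\rife{grow}, which yields
\[ |m^\epsilon H_p(t,x,T_{1/\epsilon}m^\epsilon,Du^\epsilon)| \le c_2 m^\epsilon + c_2\, m^\epsilon \frac{|Du^\epsilon|^{\beta-1}}{(T_{1/\epsilon}m^\epsilon+\mu)^\alpha}, \]
so the entire task is to split the second term on the right-hand side as $w^\epsilon+\sqrt{m^\epsilon}\,z^\epsilon$. The obstruction to squaring this quantity pointwise comes only from the region where $m^\epsilon$ is small (where the factor $\sqrt{m^\epsilon}$ cannot be exploited), but there the positivity $\mu>0$ provides a safety buffer in the denominator. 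I will therefore dichotomize at the level $m^\epsilon=1$: the small-density portion will be absorbed into $w^\epsilon$ (via $\mu$), and the large-density portion into $\sqrt{m^\epsilon}\,z^\epsilon$.

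Concretely, set
\[ w^\epsilon := c_2\, m^\epsilon\frac{|Du^\epsilon|^{\beta-1}}{(T_{1/\epsilon}m^\epsilon+\mu)^\alpha}\mathds{1}_{\{m^\epsilon\le 1\}},\qquad z^\epsilon := c_2\, \sqrt{m^\epsilon}\frac{|Du^\epsilon|^{\beta-1}}{(T_{1/\epsilon}m^\epsilon+\mu)^\alpha}\mathds{1}_{\{m^\epsilon>1\}}. \]
For the $L^{\beta'}$-bound on $w^\epsilon$: when $\epsilon<1$, on $\{m^\epsilon\le 1\}$ one has $T_{1/\epsilon}m^\epsilon=m^\epsilon$ and $(m^\epsilon+\mu)^{\alpha\beta'}\ge \mu^{\alpha(\beta'-1)}(m^\epsilon+\mu)^\alpha$, giving
\[ (w^\epsilon)^{\beta'}\le c_2^{\beta'}\mu^{-\alpha(\beta'-1)}\frac{|Du^\epsilon|^\beta}{(m^\epsilon+\mu)^\alpha}\mathds{1}_{\{m^\epsilon\le 1\}}, \]
uniformly in $L^1(Q_T)$ by~\rife{est-mDu}. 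For $z^\epsilon$ in $L^2$: on $\{m^\epsilon>1\}$ and $\epsilon\le 1$ one has $T_{1/\epsilon}m^\epsilon\ge 1$, so $(T_{1/\epsilon}m^\epsilon+\mu)^{2\alpha}\ge (T_{1/\epsilon}m^\epsilon+\mu)^\alpha$; coupling this with the pointwise Young inequality $|Du^\epsilon|^{2(\beta-1)}\le \tfrac{2(\beta-1)}{\beta}|Du^\epsilon|^\beta+\tfrac{2-\beta}{\beta}$ (valid since $\beta\le 2$) yields
\[ (z^\epsilon)^2 \le C\frac{m^\epsilon |Du^\epsilon|^\beta}{(T_{1/\epsilon}m^\epsilon+\mu)^\alpha}+C\, m^\epsilon, \]
both of which are uniformly in $L^1(Q_T)$ by~\rife{est-mDu} and the conservation $\|m^\epsilon(t)\|_{L^1(\Omega)}\equiv 1$.

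The main obstacle is the relative compactness of $(z^\epsilon)$ when $\beta<2$. The idea is to refine the previous Young step by exploiting the strict gap $2(\beta-1)<\beta$: for every $K>1$,
\[ |Du^\epsilon|^{2(\beta-1)}\le K^{2(\beta-1)}+K^{\beta-2}|Du^\epsilon|^\beta, \]
where the exponent $\beta-2$ is now strictly negative. Integrating $(z^\epsilon)^2$ over a measurable set $E\subset Q_T$ and repeating the argument above gives
\[ \int_E (z^\epsilon)^2 \le CK^{2(\beta-1)}\int_E m^\epsilon + CK^{\beta-2}. \]
Choosing $K$ first large (to render the second term arbitrarily small) and then $|E|$ small, equi-integrability of $(z^\epsilon)^2$ reduces to equi-integrability of the family $(m^\epsilon)$. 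This last point is the delicate one and I would extract it from Lemma~\ref{sec:non-sing-cong-1}: estimate~\rife{eq:11} bounds $T_{1/\epsilon}m^\epsilon$ in $L^q(Q_T)$ for some $q>1$, hence the truncated family is equi-integrable, while the excess $m^\epsilon-T_{1/\epsilon}m^\epsilon$ is supported on $\{m^\epsilon>1/\epsilon\}$ whose $L^1$-mass is controlled by a positive power of $\epsilon$ by means of the $L^\infty(0,T;L^1)$ bound on $m^\epsilon(T_{1/\epsilon}m^\epsilon)^{\alpha/(\beta-1)}$ emerging in the proof of that lemma. A Dunford--Pettis argument then delivers the asserted compactness.
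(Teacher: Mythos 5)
Your proof is correct and follows the same basic strategy as the paper's: split the drift term according to whether the density is small or large, put the small-density piece in $L^{\beta'}$ and write the large-density piece as $\sqrt{m^\epsilon}$ times an $L^2$ function, then reduce equi-integrability of $(z^\epsilon)^2$ to that of $m^\epsilon$, which both you and the paper obtain from the truncation estimate based on \rife{eq:7}/\rife{eq:11}. There are two technical deviations worth recording. First, for the $L^{\beta'}$ bound on $w^\epsilon$ you pay a constant $\mu^{-\alpha(\beta'-1)}$, whereas the paper writes $m^\epsilon/(T_{1/\epsilon}m^\epsilon+\mu)^{\alpha}\le (T_{1/\epsilon}m^\epsilon+\mu)^{1-\alpha}$ on the small set and uses the exponent inequality $(1-\alpha)\beta'\ge-\alpha$ (a consequence of $\alpha\le\beta$, i.e.\ of \rife{alfabeta}) to get a bound that is uniform in $\mu$. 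For the lemma as stated ($\mu$ fixed, family indexed by $\epsilon$) your version is perfectly adequate, but the paper explicitly reuses this proof verbatim in Lemma \ref{sec:exist-uniq-sing-3} for the family $\mu\to0$, where your $\mu$-dependent constant would be fatal; so if you intend your argument to serve the singular case as well, you must replace the $\mu^{-\alpha(\beta'-1)}$ step by the paper's exponent comparison. Second, for the equi-integrability of $(z^\epsilon)^2$ when $\beta<2$ you truncate $|Du^\epsilon|$ at a level $K$ and exploit $\beta-2<0$, while the paper applies H\"older's inequality with exponent $\beta'/2>1$ directly in \rife{zeps2}; the two devices are equivalent in effect and yours is arguably more elementary. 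Finally, note (as the paper itself implicitly does with its ``for $\epsilon$ small enough'') that the control of the excess mass $\iint_{\{m^\epsilon>1/\epsilon\}}m^\epsilon\le CT\,\epsilon^{\alpha/(\beta-1)}$ is only small for small $\epsilon$, so the equi-integrability statement really concerns the family near $\epsilon=0$; this is all that is needed for the compactness used later, but it would be worth saying explicitly.
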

\begin{proof}
From Remark \ref{sec:running-assumptions-2}, we know that $\alpha \leq \beta$.
 Therefore, if $\alpha\ge 1$, then $\alpha' \geq  \beta'$. This yields 
 \begin{equation}
   \label{eq:9}
(1-\alpha) \beta' \geq  -\alpha, \quad \quad \hbox{for any} \,\,\, \alpha\,  \hbox{ such that }  0< \alpha \le \frac {4(\beta-1)}{\beta},
 \end{equation}
because (\ref{eq:9}) is trivial if $0<\alpha\le 1$.
\\
From (\ref{grow}), we see that $\left|m^\epsilon H_p(t,x,T_{1/\epsilon}m^\epsilon, Du^\epsilon)\right| \leq c_2\, m^\epsilon\,
  (1+ \frac{|Du^\epsilon|^{\beta-1}}{(T_{1/\epsilon}m^\epsilon+\mu)^\alpha})  \le  c_2 (m^\epsilon + A_1+A_2)$, where
$A_1=  \mathds{1}_{\{T_{1/\epsilon} m^\epsilon +\mu \le 1   \}} \, \frac {|Du^\epsilon|^{\beta-1}}{(T_{1/\epsilon} m^\epsilon + \mu)^{\alpha-1} } $
and 
$A_2=  \mathds{1}_{\{T_{1/\epsilon} m^\epsilon +\mu \ge 1   \}} \,m^\epsilon\, \frac {|Du^\epsilon|^{\beta-1}}{ (T_{1/\epsilon}m^\epsilon + \mu)^{\alpha}} $.
\\
Let us first deal with $A_1$:  (\ref{eq:9}) implies that $A_1^{\beta '} \le    \mathds{1}_{\{T_{1/\epsilon}m^\epsilon +\mu \le 1   \}} \, \frac {|Du^\epsilon|^{\beta}}
{(T_{1/\epsilon} m^\epsilon + \mu)^{\alpha} } \le    \frac {|Du^\epsilon|^{\beta}}
{(T_{1/\epsilon}m^\epsilon + \mu)^{\alpha} }$. Therefore, from (\ref{est-mDu}),  $w^\epsilon=c_2\, A_1$  is bounded in $L^{\beta'}(Q_T)$ uniformly with respect to $\epsilon$
(and to $\mu$).
\\
Next, we see that  $A_2 \le \sqrt{m^\epsilon}  z^\epsilon$ where $z^\epsilon=    \mathds{1}_{\{T_{1/\epsilon} m^\epsilon +\mu \ge 1   \}}  \sqrt{m^\epsilon} 
 \frac {|Du^\epsilon|^{\beta-1}}{(T_{1/\epsilon}m^\epsilon + \mu)^{\alpha} } $. Then, by H{\"o}lder inequality, for any $E\subseteq Q_T$ we have
\be\label{zeps2}
  \begin{split}
    \iint_E( z^\epsilon)^2    dxdt& =     \iint_E  \mathds{1}_{\{T_{1/\epsilon}m^\epsilon +\mu \ge 1   \}}  m^\epsilon
 \frac {|Du^\epsilon|^{2\beta-2}}{(T_{1/\epsilon}m^\epsilon + \mu)^{2\alpha} }   dxdt \\ &\le 
  \left(   \iint_E   m^\epsilon  \frac {|Du^\epsilon|^{\beta}}{  (T_{1/\epsilon}m^\epsilon +\mu) ^{\alpha} } dxdt \right)^{\frac 2 {\beta'}} 
\left(   \iint_E \mathds{1}_{\{T_{1/\epsilon}m^\epsilon +\mu \ge 1   \}}  \frac {m^\epsilon} {(T_{1/\epsilon}m^\epsilon +\mu) ^{\alpha   \left(  1+ \frac {\beta '}{\beta'-2} \right)}}  dxdt \right)^{1-\frac 2 {\beta'}}
\\ &\le 
  \left(  \iint_E   m^\epsilon  \frac {|Du^\epsilon|^{\beta}}{  (T_{1/\epsilon}m^\epsilon +\mu) ^{\alpha} } dxdt \right)^{\frac 2 {\beta'}} 
\left(  \iint_E  m^\epsilon  dxdt \right)^{1-\frac 2 {\beta'}} .
  \end{split}
\ee
We immediately deduce from \rife{est-mDu} that $z^\epsilon$ is bounded in $\parelle2$. In addition, for any $k<\frac1\epsilon$ we have 
$$
\iint_{E}  m^\epsilon  dxdt \leq k\, |E| + \frac1{k^{\frac\alpha{\beta-1}}}\iint_E m^\epsilon\, (T_{1/\epsilon} m^\epsilon )^{\frac\alpha{\beta-1}} \leq k\, |E| + \frac C{k^{\frac\alpha{\beta-1}}}
$$
where we used \rife{eq:7}. This implies that for $\epsilon$ small enough,
$$
\iint_{E}  m^\epsilon  dxdt \leq C\, |E|^{\frac\alpha{\alpha+\beta-1}}
$$
and so  we additionally deduce from \rife{zeps2} that $z^\epsilon$ is equi-integrable in $\parelle2$ provided $\beta<2$.
 \end{proof}

We now collect some compactness properties for the family $(u^\epsilon, m^\epsilon)$, which  are mostly borrowed from  \cite{Parma}. 

 \begin{prop}\label{sec:non-sing-cong-5}
 \ 
 
   \begin{enumerate}
   \item    The family $(m^\epsilon)$ is relatively compact in $L^1(Q_T)$  and in $C( [0,T]; W^{-1, r}(\Omega) )$ for some  $r>1$.
   
   \item There exist $m\in C([0,T]; L^1(\Omega))$ and $u\in \limitate1 \cap L^q(0,T; W^{1,q}(\Omega))$ for all $q<\frac{N+2}{N+1}$ such that  after the extraction of a subsequence (not relabeled), $m^\epsilon\to m$ in $ L^1(Q_T)$ and almost everywhere, $u^\epsilon\to u$ and $Du^\epsilon\to Du$ in $ L^1(Q_T)$ and almost everywhere. Moreover, $u$ and $m$ satisfy  
\begin{eqnarray}
\label{eq:18}
  \parint \frac{|Du|^\beta}{(  m+\mu)^\alpha} \, dxdt +
\parint m\, \frac{|Du|^\beta}{(  m+\mu)^\alpha} \, dxdt   \leq C,\\
  \label{eq:20}
 \into G(x,m(T))m(T)\, dx + \parint mF(t,x,m) \,dxdt   +    \| m^{\frac\alpha{\beta-1}+1}\|_{\parelle{\frac{N+2}N}}\leq C,
\end{eqnarray}
for some $C=C(T, H, F, G,  \|m_0\|_\infty)$ (independent of $\mu$).
 
 \item  $m^\epsilon(t)\to m(t)$ weakly in $\elle1$, for every $t\in [0,T]$, and (\ref{eq:16}) holds for any $\varphi\in C^\infty_c([0,T)\times  \Omega)$. If in addition $\beta<2$, then   $m^\epsilon\to m$ in $C([0,T]; L^1(\Omega))$.
\end{enumerate}
 \end{prop}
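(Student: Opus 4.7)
The plan is to obtain the three compactness statements in sequence, adapting the strategy of \cite{Parma} to the present setting and leveraging the uniform bounds of Lemma \ref{sec:non-sing-cong-1} together with the drift decomposition of Lemma \ref{sec:non-sing-cong-2}.

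For part~(1), from \rife{eq:11} combined with Gagliardo--Nirenberg interpolation and from the bound \rife{eq:8} on $\int |D T_{1/\epsilon} m^\epsilon|^2 (T_{1/\epsilon} m^\epsilon)^{\frac\alpha{\beta-1}-1}$, a standard H\"older interpolation yields $m^\epsilon$ bounded in $L^p(0,T;W^{1,q}(\Omega))$ for some $p,q>1$. By Lemma \ref{sec:non-sing-cong-2}, the drift $m^\epsilon H_p(t,x,T_{1/\epsilon}m^\epsilon,Du^\epsilon)$ is bounded in $L^{r}(Q_T)$ for some $r>1$: the terms $c_2 m^\epsilon$ and $w^\epsilon$ are controlled directly, while $\sqrt{m^\epsilon}\,z^\epsilon$ is estimated by H\"older from the bound on $m^\epsilon$ in $L^{p_0}$ ($p_0>1$) and the $L^2$-bound on $z^\epsilon$. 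Inserting this into \rife{eq:3m} gives $\partial_t m^\epsilon$ bounded in $L^r(0,T;W^{-1,r}(\Omega))$, and Aubin--Lions yields relative compactness of $(m^\epsilon)$ in $L^1(Q_T)$. The same time-derivative bound gives equicontinuity of $t\mapsto m^\epsilon(t)$ in $W^{-1,r}(\Omega)$, hence relative compactness in $C([0,T];W^{-1,r}(\Omega))$ by Ascoli.

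For part~(2), once a subsequence is extracted with $m^\epsilon\to m$ a.e.\ and in $L^1(Q_T)$, we turn to the Bellman equation \rife{eq:3u}. By \rife{eq:11}, \rife{est-mDu} and \rife{coerc}, the right hand side $F^\epsilon-H(t,x,T_{1/\epsilon}m^\epsilon,Du^\epsilon)$ is bounded in $L^1(Q_T)$; coupled with \rife{eq:32}, the Boccardo--Gallou\"et technique for parabolic equations with $L^1$ data yields $u^\epsilon$ bounded in $L^q(0,T;W^{1,q}(\Omega))$ for every $q<(N+2)/(N+1)$, and hence, up to a further subsequence, $u^\epsilon\to u$ in $L^1(Q_T)$. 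The crucial and most delicate step is the almost everywhere convergence of $Du^\epsilon$, which is what ultimately allows us to identify the limits of $H(t,x,T_{1/\epsilon}m^\epsilon,Du^\epsilon)$ and $m^\epsilon H_p(t,x,T_{1/\epsilon}m^\epsilon,Du^\epsilon)$: following \cite{Parma}, we test the Bellman equation satisfied by $u^\epsilon-u^\delta$ against truncations $T_k(u^\epsilon-u^\delta)$ and exploit the coercivity \rife{coerc} jointly with the convexity-type inequality \rife{conv} to derive a Cauchy-in-measure property for $(Du^\epsilon)$. The bounds \rife{eq:18} and \rife{eq:20} for the limit then follow by Fatou applied to \rife{eq:11} and \rife{est-mDu}.

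For part~(3), the equicontinuity in $C([0,T];W^{-1,r}(\Omega))$ provides pointwise convergence $m^\epsilon(t)\to m(t)$ in $W^{-1,r}(\Omega)$ for every $t\in[0,T]$; since $m^\epsilon(t)$ is uniformly bounded in $L^{1+\frac\alpha{\beta-1}}(\Omega)$ via \rife{eq:7}, this upgrades to weak $L^1(\Omega)$ convergence through Dunford--Pettis. To pass to the limit in \rife{eq:16}, the a.e.\ convergences of $m^\epsilon$ and $Du^\epsilon$, together with the decomposition of Lemma \ref{sec:non-sing-cong-2} and Vitali's theorem, identify the limit of $m^\epsilon H_p(t,x,T_{1/\epsilon}m^\epsilon,Du^\epsilon)$ as $m\,H_p(t,x,m,Du)$ in $L^1(Q_T)$. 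Finally, when $\beta<2$, the equi-integrability of $(z^\epsilon)$ in $L^2$ asserted in Lemma \ref{sec:non-sing-cong-2} produces equi-integrability of the drift in $L^1(Q_T)$, and a density argument based on the $W^{-1,r}$-equicontinuity in time combined with the uniform $L^{1+\alpha/(\beta-1)}$-tightness of $m^\epsilon(t)$ upgrades the convergence to $m^\epsilon\to m$ in $C([0,T];L^1(\Omega))$. The main obstacle throughout is the gradient convergence in part~(2), because the congestion factor $(T_{1/\epsilon}m^\epsilon+\mu)^{-\alpha}$ degenerates for large $m^\epsilon$ and is only tamed through the integral bounds \rife{est-mDu}--\rife{eq:12}; pointwise gradient control consequently requires a Boccardo--Murat-type renormalization calibrated to the congestion weight, in the spirit of \cite{Parma}.
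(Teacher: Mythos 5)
Your overall architecture matches the paper's: uniform bounds from Lemma \ref{sec:non-sing-cong-1}, compactness of $m^\epsilon$ through the Fokker--Planck equation with the drift decomposition of Lemma \ref{sec:non-sing-cong-2}, Fatou for \rife{eq:18}--\rife{eq:20}, Vitali plus equi-integrability to identify the limit of $m^\epsilon H_p$, and Dunford--Pettis for the pointwise-in-time weak $\elle1$ convergence. However, two of your key steps rely on mechanisms that do not work as described. First, for the a.e.\ convergence of $Du^\epsilon$ you propose testing the equation for $u^\epsilon-u^\delta$ against $T_k(u^\epsilon-u^\delta)$ and extracting a Cauchy-in-measure property from \rife{coerc} and \rife{conv}. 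This Boccardo--Murat scheme is designed for monotone operators in $Du$; here the difference $H(t,x,T_{1/\epsilon}m^\epsilon,Du^\epsilon)-H(t,x,T_{1/\delta}m^\delta,Du^\delta)$ has no sign when multiplied by $T_k(u^\epsilon-u^\delta)$ (convexity of $H$ in $p$ enters with the wrong sign, and $H$ also depends on the two different densities), so no smallness of $\int|D(u^\epsilon-u^\delta)|^2\mathds{1}_{\{|u^\epsilon-u^\delta|<k\}}$ follows. The paper's route is both simpler and correct: since $F^\epsilon(t,x,m^\epsilon)-H(t,x,T_{1/\epsilon}m^\epsilon,Du^\epsilon)$ is bounded in $\parelle1$ and $u^\epsilon(T)$ in $\elle1$, one views \rife{eq:3u} as a \emph{linear} heat equation with $L^1$-bounded data and invokes the classical compactness results of \cite{BDGO,BG}, which already give strong $L^1$ and a.e.\ convergence of $u^\epsilon$ \emph{and} $Du^\epsilon$; no nonlinear renormalization is needed at this stage.

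Second, for $\beta<2$ your claim that equicontinuity in $C([0,T];W^{-1,r}(\Omega))$ together with uniform equi-integrability of $m^\epsilon(t)$ in $\elle1$ "upgrades" the convergence to $\continue1$ is insufficient: that combination only yields convergence in $C([0,T];\elle1\hbox{-weak})$, not $\sup_t\|m^\epsilon(t)-m(t)\|_{\elle1}\to 0$. The strong uniform-in-time convergence genuinely uses the relative compactness (not mere boundedness) of $z^\epsilon$ in $\parelle2$ asserted in Lemma \ref{sec:non-sing-cong-2}, fed into \cite[Theorem 6.1]{Parma}. A smaller caveat in your part (1): the gradient estimate \rife{eq:8} and the bound \rife{eq:11} concern $T_{1/\epsilon}m^\epsilon$, not $m^\epsilon$ itself, so your claimed $L^p(0,T;W^{1,q}(\Omega))$ bound on $m^\epsilon$ and the $L^{p_0}$ bound with $p_0>1$ need an extra argument to handle the region $\{m^\epsilon>1/\epsilon\}$; the paper sidesteps this by applying the compactness theory for Fokker--Planck equations of \cite[Theorem 6.1]{Parma} directly to the decomposition $c_2m^\epsilon+w^\epsilon+\sqrt{m^\epsilon}\,z^\epsilon$.
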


 \begin{proof}
Proceeding as in the end of the proof of Lemma \ref{sec:non-sing-cong-1}, we see that $(F^\epsilon(t,x,m^\epsilon))_\epsilon $ is bounded in $L^1(Q_T)$,
$(u^\epsilon(t=T))_\epsilon$ is bounded in $L^1  (\Omega)$ and that $H(t,x, T_{1/\epsilon}m, Du^\epsilon) $ is bounded in $L^1 (Q_T)$ uniformly in $\epsilon$.
Therefore, $\left(-\partial_t u^\epsilon -\nu \Delta u^\epsilon\right)_\epsilon $ is bounded in $L^1(Q_T)$  and 
$(u^\epsilon(t=T))_\epsilon$ is bounded in $L^1  (\Omega)$. The compactness of $u^\epsilon$ and $Du^\epsilon$ in $\parelle1$ (and almost everywhere) follow from classical 
results on the heat equation with $L^1$ data, see e.g. \cite{BDGO}, \cite{BG}: there exist $u\in \limitate1 \cap L^q((0,T);W^{1,q}(\Omega)) $ for every $q< \frac {N+2}{N+1}$, such that   after the extraction of a subsequence
$u^\epsilon\to u$ and $Du^\epsilon\to Du$ in $ L^1(Q_T)$ and almost everywhere. 
 
As far as $m^\epsilon$ is concerned, thanks  to  Lemma~\ref{sec:non-sing-cong-2} we can apply the estimates and the compactness results in   \cite[Theorem 6.1]{Parma}. In particular, this implies the  point (1) in the statement and so, up to extraction of  a subsequence, the convergence of $m^\epsilon $ in $\parelle1$ and almost everywhere.
\\
The bounds (\ref{eq:18}) and (\ref{eq:20}) follow from (\ref{eq:11}), (\ref{est-mDu}) and Fatou's lemma, keeping in mind that the constant $C$ in the  estimates of Lemma~\ref{sec:non-sing-cong-1} does not depend on $\mu$.
\\
To prove that $m$ is a solution of (\ref{eq:16}),   we observe that  $m^\epsilon H_p(t,x,T_{1/\epsilon}m^\epsilon, Du^\epsilon)$ strongly converges in $\parelle1$. Indeed, the decomposition provided by   Lemma \ref{sec:non-sing-cong-2} implies that this term is equi-integrable, so using the almost everywhere convergence and Vitali's theorem, we obtain that $m^\epsilon H_p(t,x,T_{1/\epsilon}m^\epsilon, Du^\epsilon)\to  m H_p(t,x,m, Du)$ in $L^1 (Q_T)$.
As a consequence, (\ref{eq:16}) holds for any $\varphi\in C^\infty_c([0,T)\times \Omega)$.\\
Finally,  we have that $m^\epsilon(t)$ is bounded in $L\log L(\Omega)$ uniformly in time (see e.g. \cite{Parma}), which means that it is equi-integrable in $\elle1$. By Dunford-Pettis theorem, it is weakly relatively compact in $\elle1$, and since we already know that it converges to $m(t)$ in  $W^{-1, r}(\Omega)$, we conclude that it also converges weakly in $\elle1$ to the same limit. In fact, if $ \beta<2$, from Lemma \ref{sec:non-sing-cong-2}   we have the strong convergence of the terms $w^\epsilon, z^\epsilon$ in $\parelle2$. This means that we can apply directly  Theorem 6.1 in \cite{Parma} to deduce the strong convergence of $m^\epsilon$ in  $C([0,T]; L^1(\Omega))$.
\end{proof}

Henceforth, we let $(u,m)$ as well as the convergent subsequence $(u^\epsilon, m^\epsilon)$  be given by Proposition \ref{sec:non-sing-cong-5}. Let us notice that the weak convergence of $m^\epsilon(T)$ in $\elle1$ easily yields that 
$ \rho^\epsilon \star m^\epsilon(T)$ also converges to $ m(T) $ weakly in $\elle1$. In particular, it is well-known that the sequence $\rho^\epsilon \star m^\epsilon(T)$ generates a family of parametrized Young measures $\{\nu_x\}\in {\mathcal P}(\R)$, see e.g. \cite{Ball}, \cite{Ped}. This means that  $\nu_x$ is a probability measure for a.e. $x\in \Omega$, the mapping $x \mapsto \nu_x$ is weakly-$*$ measurable and, for a subsequence (not relabeled), 
\be\label{proyou}
f(x,  \rho^\epsilon \star m^\epsilon(T)) \rightharpoonup \int_{\R} f(x,\lambda) d\nu_x(\lambda)
\qquad \hbox{weakly in $\elle1$}
\ee 
for every Carath{\'e}odory function $f(x,s)$ such that $f(x,  \rho^\epsilon \star m^\epsilon(T))$ is equi-integrable in $\elle1$.

Thanks to the Young measures, we can initially identify the limit of $u^\epsilon(T)$ and give a first description,   by now in  a relaxed sense,   of the equation satisfied by $u$. 

\begin{lemma}
  \label{sec:non-sing-cong-4}
Let $(u,m)$ be given by Proposition \ref{sec:non-sing-cong-5}. Then 
\begin{equation}
  \label{eq:19}
  \begin{split}
    \int_0^T \into u\, \vfi_t\, dxdt & - \nu \int_0^T \into u\,\Delta\vfi\, dxdt 
+ \int_0^T\into H(t,x,m,Du)\vfi\, dxdt
\\
& \leq  \int_0^T \into F(t,x,m)\vfi\, dxdt + \into \int_{\R} G(x,\lambda)d\nu_x(\lambda)\,\vfi(T)\, dx
  \end{split}
\end{equation}
for every nonnegative function $\vfi\in C^\infty_c((0,T]\times \Omega)$. 

In addition, we have that $\int_{\R} G(x,\lambda)\lambda\, d\nu_x(\lambda)\in \elle1$, $G(x,m(T))m(T)\in \elle1$ and 
\be\label{gmm}
\into  G(x,m(T))m(T)\,dx \leq C
\ee
for some $C=C(T, H, F, G,  \|m_0\|_\infty)$ (independent of $\mu$).
\end{lemma}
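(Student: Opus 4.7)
My plan is to test the approximate Bellman equation~(\ref{eq:3u}) against a nonnegative $\vfi\in C^\infty_c((0,T]\times \Omega)$, integrate by parts (using $\vfi(0)\equiv 0$ and periodicity in $x$), and pass to the limit $\epsilon\to 0$ in the resulting weak identity
\begin{multline*}
\parint u^\epsilon\vfi_t\, dxdt - \nu\parint u^\epsilon\Delta\vfi\, dxdt + \parint H^\epsilon\vfi\, dxdt \\ = \parint F^\epsilon(t,x,m^\epsilon)\vfi\, dxdt + \into G^\epsilon(x,m^\epsilon(T))\vfi(T)\, dx,
\end{multline*}
where $H^\epsilon := H(t,x,T_{1/\epsilon}m^\epsilon,Du^\epsilon)$. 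The two linear terms in $u^\epsilon$ pass immediately by the strong $\parelle1$ convergence $u^\epsilon\to u$ from Proposition~\ref{sec:non-sing-cong-5}.

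For the Hamiltonian, coercivity~(\ref{coerc}) and the bound~(\ref{eq:11}) on $(T_{1/\epsilon}m^\epsilon)^{\alpha/(\beta-1)+1}$ in $\parelle{(N+2)/N}$ make $(T_{1/\epsilon}m^\epsilon)^{\alpha/(\beta-1)}$ equi-integrable, so by Vitali it converges in $\parelle1$ to $m^{\alpha/(\beta-1)}$. The integrand $\bigl(H^\epsilon+c_1\bigl(1+(T_{1/\epsilon}m^\epsilon)^{\alpha/(\beta-1)}\bigr)\bigr)\vfi$ is then nonnegative, and Fatou yields $\parint H(t,x,m,Du)\vfi\, dxdt \leq \liminf_\epsilon \parint H^\epsilon\vfi\, dxdt$; this is exactly what produces the $\leq$ in~(\ref{eq:19}). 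For the $F^\epsilon$ term, the uniform bound $\parint F^\epsilon m^\epsilon\, dxdt \leq C$ combined with the symmetry identity (valid because $\rho^\epsilon$ is even) and the superlinearity of $f(s)s$ coming from~(\ref{F1}) yield equi-integrability of $F(\cdot,\rho^\epsilon\star m^\epsilon)$ via de la Vall\'ee-Poussin; rewriting $\parint F^\epsilon\vfi\, dxdt = \parint F(\cdot,\rho^\epsilon\star m^\epsilon)(\rho^\epsilon\star\vfi)\, dxdt$, the a.e.\ convergence $\rho^\epsilon\star m^\epsilon\to m$ (from $m^\epsilon\to m$ in $\parelle1$) together with the uniform convergence $\rho^\epsilon\star\vfi\to\vfi$ identifies the limit as $\parint F(t,x,m)\vfi\, dxdt$.

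The real obstacle is the terminal term, since $m^\epsilon(T)$ converges only weakly in $\elle1$ and not almost everywhere, which forbids a pointwise passage to the limit in the nonlinear composition $G(\cdot,\rho^\epsilon\star m^\epsilon(T))$. This is precisely where the Young measure $\{\nu_x\}$ generated by $\rho^\epsilon\star m^\epsilon(T)$ becomes essential. The same de la Vall\'ee-Poussin strategy, applied to $G$ via~(\ref{G2}) and the symmetry identity $\into G^\epsilon(x, m^\epsilon(T))m^\epsilon(T)\, dx = \into G(x,\rho^\epsilon\star m^\epsilon(T))(\rho^\epsilon\star m^\epsilon(T))\, dx$ together with the bound~(\ref{eq:11}), shows that $G(\cdot, \rho^\epsilon\star m^\epsilon(T))$ is equi-integrable; by~(\ref{proyou}) it converges weakly in $\elle1$ to $\int_\R G(x,\lambda)\, d\nu_x(\lambda)$. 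Rewriting $\into G^\epsilon(\cdot,m^\epsilon(T))\vfi(T)\, dx = \into G(\cdot, \rho^\epsilon\star m^\epsilon(T))(\rho^\epsilon\star\vfi(T))\, dx$ and pairing this weak $\elle1$ convergence with the uniform convergence $\rho^\epsilon\star\vfi(T)\to\vfi(T)$ delivers the desired terminal term in~(\ref{eq:19}).

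For the final integrability claims, lower semicontinuity of Young measures applied to the nonnegative Carath\'eodory integrand $(G(x,\lambda)+|c_4|)\lambda$, together with the symmetry identity above and the bound~(\ref{eq:11}), gives $\into\int_\R G(x,\lambda)\lambda\, d\nu_x(\lambda)\, dx \leq C$, hence $\int_\R G(x,\lambda)\lambda\, d\nu_x(\lambda)\in \elle1$. For $G(x,m(T))m(T)$ itself, since $g(s)s$ is convex and $m(T)=\int_\R\lambda\, d\nu_x(\lambda)$ is the first moment of $\nu_x$, Jensen's inequality gives $g(m(T))m(T)\leq \int_\R g(\lambda)\lambda\, d\nu_x(\lambda)$; chaining this with the two-sided comparison $\lambda g(s)-\kappa \le G(x,s)\le \lambda^{-1}g(s)+\kappa$ from~(\ref{G2}) controls $G(x,m(T))m(T)$ pointwise by $\int_\R G(x,\lambda)\lambda\, d\nu_x(\lambda)$ plus lower-order terms involving $m(T)\in \elle1$, establishing $G(x,m(T))m(T)\in \elle1$ and the bound~(\ref{gmm}).
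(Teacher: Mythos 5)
Your argument is correct and follows essentially the same route as the paper: Fatou for the Hamiltonian via the $L^1$-convergence of $(T_{1/\epsilon}m^\epsilon)^{\alpha/(\beta-1)}$, Vitali for the $F$-term, the Young-measure convergence \rife{proyou} for the terminal term, and convexity of $g(s)s$ for the integrability claims (your Jensen-with-barycenter step is just the pointwise form of the paper's weak lower semicontinuity argument, and your truncation-free lower semicontinuity replaces the paper's $T_k(G)$ plus monotone convergence). One small imprecision: \rife{F1} does not give superlinearity of $f(s)s$ (only convexity, with $f$ nondecreasing), so the de la Vall\'ee-Poussin justification of equi-integrability of $F(\cdot,\rho^\epsilon\star m^\epsilon)$ is not literally available; the correct and more direct route, as in \rife{magf}, is to split on $\{\rho^\epsilon\star m^\epsilon\le L\}$ and use the uniform bound on $\parint F^\epsilon m^\epsilon\,dxdt$ on the complement, which uses exactly the ingredients you cite.
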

\begin{proof}
From (\ref{eq:11}) and the convergence of $m^\epsilon$, we see that $(T_{1/\epsilon} m^\epsilon)^{\frac \alpha  {\beta-1}}$ converges in $L^1(Q_T)$.
Using this observation and (\ref{coerc}), we can  apply Fatou's lemma and obtain that, for any $\varphi\in C^\infty_c((0,T]\times  \Omega)$, $\varphi\ge0$,
\begin{displaymath}
 \parint   H(t,x, m,Du) \varphi(t,x) dxdt \le \liminf_{\epsilon\to 0} \parint   H(t,x, T_{1 /\epsilon} m^\epsilon,Du^\epsilon) \varphi(t,x) dxdt. 
\end{displaymath}
Moreover, from (\ref{eq:11}) and (\ref{F1}), and the definition of $F^\epsilon$, we see that $ F^\epsilon(t,x,m^\epsilon)$ is equi-integrable in $Q_T$. Therefore, from Vitali's theorem,
\begin{equation}
  \label{eq:14}
\lim_{\epsilon\to 0}   \parint   F^\epsilon(t,x,m^\epsilon)\varphi(t,x) dxdt=\parint   F(t,x,m)\varphi(t,x) dxdt.   
\end{equation}
Let us deal with the boundary condition at $t=T$:  again from (\ref{eq:11}), the definition of $G^\epsilon$ and  (\ref{G2}), we deduce that $G(x, \rho^\epsilon\star m^\epsilon(T))$ is equi-integrable, so by the properties of Young measures we obtain that 
\begin{equation}
  \label{eq:15}
  \begin{split}
\lim_{\epsilon\to 0}  \into u^\epsilon(T,x)\varphi(T,x,) dx& =\lim_{\epsilon\to 0}  \into G(x, \rho^\epsilon\star m^\epsilon(T)) \,\rho^\epsilon\star\varphi(T,x) dx 
\\ & =  \into \int_{\R} G(x,\lambda)d\nu_x(\lambda) \varphi(T,x) dx .  
\end{split}
\end{equation}
Combining the latter three points yields (\ref{eq:19}).

Finally, we notice that the bound $\into G^\epsilon(x,m^\epsilon(T))m^\epsilon(T)\,\leq C$ given by \rife{eq:11} implies that both $G(x,m(T))m(T)\in \elle1$ and $\into \int_{\R} G(x,\lambda)\lambda\, d\nu_x(\lambda)\in \elle1$; indeed, on the one hand, thanks to \rife{G2} we have 
$$
\into g(\rho^\epsilon\star m^\epsilon(T))\rho^\epsilon\star m^\epsilon(T)\, dx \leq C
$$
so by convexity of $g(s)s$ and weak convergence of $\rho^\epsilon\star m^\epsilon(T)$ we deduce that $g(m(T))m(T)\in \elle1$, hence $G(x,m(T))m(T)\in \elle1$ again by \rife{G2}, and estimate \rife{gmm} holds.
On the other hand, by properties of Young measures we have 
$$
T_k(G(x, \rho^\epsilon\star m^\epsilon(T))) \rho^\epsilon\star m^\epsilon(T) \rightharpoonup \int_{\R} T_k(G(x,\lambda))\lambda\, d\nu_x(\lambda)
$$
where $T_k(s)= \min(s,k)$. Hence
$$
\into \int_{\R} T_k(G(x,\lambda))\lambda\, d\nu_x(\lambda)\, dx = 
\lim_{\epsilon\to 0} \into T_k(G(x, \rho^\epsilon\star m^\epsilon(T))) \rho^\epsilon\star m^\epsilon(T) \, dx \leq C\,.
$$
By monotone convergence,   letting $k\to \infty$ we deduce that $\int_{\R}  G(x,\lambda)\lambda\, d\nu_x(\lambda)\in \elle1$. 
\end{proof}

\begin{lemma}
  \label{sec:non-sing-cong-8}
Let  $(m^\epsilon, u^\epsilon)$ be given by Proposition  \ref{sec:non-sing-cong-5}. Possibly after another extraction of a subsequence, $u^\epsilon|_{t=0}$ 
converges weakly * to a bounded measure $\chi$  on $\Omega$. Moreover $u(0)$ is a well defined Radon measure on $\Omega$ and  $u(0)\ge \chi$.
\end{lemma}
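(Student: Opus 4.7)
The proof will go in four stages: weak-$*$ extraction of $\chi$ and realization of $u(0)$ as a Radon measure via Lemma~\ref{sec:case-non-singular}; testing the $u^\epsilon$-equation against a cutoff concentrated near $t=0$; passing to a limit in $\epsilon$; and finally letting the thickness of the slab tend to zero.

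\textbf{Weak-$*$ extraction of $\chi$ and definition of $u(0)$.} Estimate \rife{eq:32} gives $\|u^\epsilon(0)\|_{\elle1}\le C$ uniformly in $\epsilon$, so $\{u^\epsilon(0)\}$ is bounded in the dual of $C(\Omega)$; Banach-Alaoglu yields a subsequence (not relabeled) with $u^\epsilon(0)\rightharpoonup^* \chi$ for some bounded Radon measure $\chi$ on $\Omega$. On the other hand, Lemma~\ref{sec:non-sing-cong-4} shows $u$ satisfies $-\partial_t u-\nu\Delta u\le F-H$ in the sense of distributions, with terminal datum $\int_\R G(x,\lambda)d\nu_x(\lambda)\in \elle1$. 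For Lemma~\ref{sec:case-non-singular} to apply, the right-hand side must lie in $\parelle1$: for $H$ this follows from the convexity inequality $H\le H_p\cdot p$ (a consequence of \rife{eq:1}) combined with \rife{grow}, $|Du|^\beta/(m+\mu)^\alpha\in\parelle1$ from \rife{eq:18}, and $|Du|\in L^q(Q_T)$ for some $q>1$; the lower bound $H\ge -c_1(1+m^{\alpha/(\beta-1)})$ is handled via $m\in\parelle{1+\alpha/(\beta-1)}$. Lemma~\ref{sec:case-non-singular} and Remark~\ref{sec:case-non-singular-1} then produce $u(0):=u(0^+)$ as a bounded Radon measure on $\Omega$.

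\textbf{Comparison through a thin slab.} Given $\xi\in C^\infty(\Omega)$ with $\xi\ge 0$, pick a smooth nonincreasing cutoff $\psi_\delta\in C^\infty([0,T])$ with $\psi_\delta(0)=1$ and $\psi_\delta\equiv 0$ on $[\delta,T]$, and test the classical equation \rife{eq:3u} against $\vfi(t,x)=\psi_\delta(t)\xi(x)$. Integration by parts in $t$ and $x$ (using $\vfi(T)=0$, $\vfi(0)=\xi$) gives the exact identity
\[
\into u^\epsilon(0)\xi\,dx = -\parint u^\epsilon\psi_\delta'\xi\,dxdt +\nu\parint u^\epsilon\psi_\delta\Delta\xi\,dxdt -\parint H^\epsilon\psi_\delta\xi\,dxdt+\parint F^\epsilon\psi_\delta\xi\,dxdt,
\]
with $H^\epsilon:=H(t,x,T_{1/\epsilon}m^\epsilon,Du^\epsilon)$. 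Passing $\epsilon\to 0$: the LHS converges to $\int_\Omega\xi\,d\chi$; the two $u^\epsilon$-terms converge by the strong $L^1(Q_T)$-convergence from Proposition~\ref{sec:non-sing-cong-5}; the $F^\epsilon$-term by the equi-integrability exploited in Lemma~\ref{sec:non-sing-cong-4}. For $H^\epsilon$, Fatou's lemma (with an $L^1$ lower bound coming from \rife{coerc} and the uniform estimate \rife{eq:11}) yields $\liminf_\epsilon\parint H^\epsilon\psi_\delta\xi\ge \parint H\psi_\delta\xi$. Since the other terms have genuine limits, $\lim_\epsilon\parint H^\epsilon\psi_\delta\xi$ exists by subtraction and is $\ge\parint H\psi_\delta\xi$. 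Hence
\[
\int_\Omega\xi\,d\chi \le -\parint u\psi_\delta'\xi\,dxdt +\nu\parint u\psi_\delta\Delta\xi\,dxdt -\parint H\psi_\delta\xi\,dxdt +\parint F\psi_\delta\xi\,dxdt.
\]

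\textbf{Concentration at $t=0$ and main obstacle.} Letting $\delta\to 0^+$, dominated convergence annihilates the three terms where $\psi_\delta$ multiplies an $\parelle1$ function (namely $u\Delta\xi$, $H$, $F$). Setting $\Phi(t):=\into\xi\,u(t,x)\,dx$, the BV representative supplied by Lemma~\ref{sec:case-non-singular} is right-continuous at $t=0$ with $\Phi(0^+)=\int_\Omega\xi\,du(0)$; since $-\psi_\delta'\,dt$ is a probability measure supported in $[0,\delta]$, one has $-\parint u\psi_\delta'\xi\,dxdt = \int_0^T \Phi(t)(-\psi_\delta'(t))\,dt \to \Phi(0^+)$. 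Therefore $\int_\Omega\xi\,d\chi\le\int_\Omega\xi\,du(0)$ for every smooth nonnegative $\xi$, and density of $C^\infty$ in $C(\Omega)$ yields $u(0)\ge\chi$ as Radon measures. The crux of the argument is the one-sided Fatou step on the Hamiltonian: only the $\liminf$-inequality is available for $\parint H^\epsilon\psi_\delta\xi$, and this asymmetry (equality for $u^\epsilon(0)$, lower semicontinuity for $H^\epsilon$) is precisely what converts the exact identity into the one-sided bound $u(0)\ge\chi$; a secondary technical step is the $\parelle1$ bound on $H(t,x,m,Du)$ for the limit $u$, which is what legitimizes invoking Lemma~\ref{sec:case-non-singular}.
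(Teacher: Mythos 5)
Your proof is correct and follows essentially the same route as the paper's: weak-$*$ extraction of $\chi$ from the uniform $L^1$ bound \rife{eq:32}, definition of $u(0)$ via Lemma~\ref{sec:case-non-singular}, testing \rife{eq:3u} with a product test function concentrated near $t=0$ (the paper uses $\psi(x)(1-t/\eta)_+$ where you use a smooth cutoff, a cosmetic difference), a one-sided Fatou passage on the Hamiltonian as $\epsilon\to 0$, and identification of the time term with $u(0^+)$ via the BV representative as the slab thickness tends to zero. No gaps.
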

\begin{proof}
  From (\ref{eq:32}), we know that $u^\epsilon(0)$ is bounded in $L^1(\Omega)$. 
Therefore, there exists some bounded measure $\chi$ such that, up to a subsquence,  $u^\epsilon(0)$ tends to $\chi$ weakly $*$.
 Moreover, from Lemma~\ref{sec:case-non-singular}, Remark~\ref{sec:case-non-singular-1} and Lemma \ref{sec:non-sing-cong-4},   $u(0)$ is a  well defined Radon measure.\\
Let us prove that $\chi \le u(0)$. Consider $\varphi(t,x)= \psi(x) (1- t/\eta)_+ $, where $\psi$ is any nonnegative smooth  function 
defined on $\Omega$.  Taking $\varphi$ as a test-function in (\ref{eq:3u}), we obtain that 
\begin{displaymath}
  \begin{split}
    \int_0^T \into u^\epsilon\, \vfi_t\, dxdt & - \nu \int_0^T \into u^\epsilon\,\Delta\vfi\, dxdt 
+ \int_0^T\into H^\epsilon(t,x,T_{1/\epsilon}m^\epsilon,Du^\epsilon)\vfi\, dxdt
\\
& = \int_0^T \into F^\epsilon(t,x,m^\epsilon)\vfi\, dxdt  - \into \vfi(0,x)u^\epsilon(0,x) dx,
  \end{split}
\end{displaymath}
which implies by using Fatou's lemma as in the proof of Proposition~\ref{sec:non-sing-cong-5} that
\begin{equation}
  \label{eq:3}
  \begin{split}
    \int_0^T \into u\, \vfi_t\, dxdt & - \nu \int_0^T \into u\,\Delta\vfi\, dxdt 
+ \int_0^T\into H(t,x,m,Du)\vfi\, dxdt
\\
& \le \int_0^T \into F(t,x,m)\vfi\, dxdt  -     \langle  \chi , \psi \rangle  .
  \end{split}
\end{equation}
On the other hand,  as $\eta\to 0$,  $  \int_0^T \into u\, \vfi_t\, dxdt=  -\frac 1  \eta  \int_0^{\eta}\int_\Omega u\,\psi(x) \,dx dt$ tends to
 $ -\langle u(0), \psi \rangle$.
Passing to the limit in (\ref{eq:3}) as $\eta\to 0$, we obtain that $  \langle u(0), \psi \rangle \ge  \langle  \chi , \psi \rangle $, which is the desired result.
\end{proof}
 


\section{Crossed energy inequality}
\label{sec:main-comp-lemma}
We prove here the main step towards the uniqueness result.

\begin{lemma}\label{compare}
Assume that $H$ satisfies assumptions \rife{coerc}-\rife{grow} with $\mu>0$, and that \rife{F1}--\rife{G2} hold true.  

Let 
$u\in \limitate1$  satisfy \rife{eq:19} for some family of probability measures $\{\nu_x\}$ (weakly$-*$ measurable w.r.t. $x$) such that  
\be\label{gll}
\int_{\R}G(x,\lambda)\lambda \, d\nu_x(\lambda) \in \elle1
\ee 
and  let $m\in \continue1$ be a solution of  \rife{MFGm}, and assume that $(u,m)$ satisfy (ii)  in Definition~\ref{weaksol}. 
Then we have
\begin{equation}\label{ineq}
\begin{split}
\langle \tilde m_0\,,\, u(0)\rangle & \leq  \into \int_{\R} G(x,\lambda)\, d\nu_x(\lambda) \, \tilde m(T)\,dx + \parint F(t,x,m)\tilde m\, dxdt
\\ & + \parint \left[ \tilde m\, H_p(t,x,\tilde m, D\tilde u)\cdot Du- \tilde m \,H(t,x,m,Du)\right]dxdt
\end{split}
\end{equation}
for any couple $(\tilde u, \tilde m)$ satisfying the same conditions as $(u,m)$.
\end{lemma}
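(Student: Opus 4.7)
The plan is to perform the classical crossed testing argument: test the subsolution inequality \rife{eq:19} for $u$ against the nonnegative function $\tilde m$, test the Kolmogorov equation \rife{MFGm} for $\tilde m$ against $u$, and combine them so that the parabolic parts cancel. Formally, the $t=0$ boundary contribution then produces $\langle u(0),\tilde m_0\rangle$ via Lemma \ref{sec:case-non-singular}, the $t=T$ contribution produces $\into\int_\R G(x,\lambda)\,d\nu_x(\lambda)\,\tilde m(T)\,dx$ through the Young-measure pairing already present in \rife{eq:19}, and the remaining PDE terms rearrange into exactly \rife{ineq}. The whole difficulty is to make this rigorous with the limited regularity available.

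The first step is to check that every integrand in \rife{ineq} is in $L^1$. The delicate one is the crossed Hamiltonian: by \rife{grow},
\[
|\tilde m H_p(t,x,\tilde m,D\tilde u)\cdot Du| \leq c_2\,\tilde m\,|Du| + c_2\,\frac{\tilde m\,|D\tilde u|^{\beta-1}|Du|}{(\tilde m+\mu)^\alpha},
\]
and Young's inequality, combined with (ii) of Definition \ref{weaksol} for both $(u,m)$ and $(\tilde u,\tilde m)$ (in particular $\tilde m\in\parelle{1+\frac\alpha{\beta-1}}$ and $|Du|^\beta/(m+\mu)^\alpha,\ |D\tilde u|^\beta/(\tilde m+\mu)^\alpha,\ m|Du|^\beta/(m+\mu)^\alpha\in\parelle1$), gives an $L^1$ bound. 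Similarly \rife{coerc} combined with (ii) yields $\tilde m\,H(t,x,m,Du)\in\parelle1$; the source and terminal contributions are integrable by (ii), \rife{F1}, \rife{G2} and \rife{gll}.

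Next I would regularize in space. Let $\rho_\delta$ be a symmetric mollifier and set $\tilde m_\delta=\tilde m\ast\rho_\delta$; commuting $\rho_\delta$ with \rife{MFGm} gives, pointwise in $x$ and distributionally in $t$,
\[
\partial_t \tilde m_\delta -\nu\Delta \tilde m_\delta = \dive\bigl((\tilde m\,H_p(t,x,\tilde m,D\tilde u))\ast\rho_\delta\bigr).
\]
I would then plug $\varphi(t,x)=\tilde m_\delta(t,x)\chi_\eta(t)$ into \rife{eq:19}, with $\chi_\eta$ a smooth time-cutoff vanishing near $t=0$ and equal to $1$ on $[\eta,T]$ (admissible because $\varphi\ge 0$ and $\varphi(0)=0$, possibly after a further mollification in $t$, which is justified by a density argument using the $L^1$ controls of Step~1). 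Substituting the PDE above for $\partial_t\tilde m_\delta-\nu\Delta\tilde m_\delta$, integrating by parts in $x$ (no boundary on the torus) and letting $\eta\to 0$ while identifying the $t=0$ trace through Lemma \ref{sec:case-non-singular}, one obtains
\begin{align*}
\langle u(0),\tilde m_\delta(0)\rangle & -\parint Du\cdot \bigl[(\tilde m H_p(t,x,\tilde m,D\tilde u))\ast\rho_\delta\bigr]\,dxdt +\parint H(t,x,m,Du)\,\tilde m_\delta\,dxdt\\
& \leq \parint F(t,x,m)\,\tilde m_\delta\,dxdt + \into \int_\R G(x,\lambda)\,d\nu_x(\lambda)\,\tilde m_\delta(T,x)\,dx.
\end{align*}

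Finally I would pass to the limit $\delta\to 0$. Since $\tilde m_\delta\to\tilde m$ in $C([0,T];\elle1)$ and $\tilde m_0\in C(\Omega)$ implies $\tilde m_\delta(0)=\tilde m_0\ast\rho_\delta\to\tilde m_0$ uniformly, the trace $\langle u(0),\tilde m_\delta(0)\rangle$ converges to $\langle u(0),\tilde m_0\rangle$, and the source and terminal integrals pass to the limit by dominated convergence using the bounds of Step~1. For the crossed term, symmetry of $\rho_\delta$ yields
\[
\parint Du\cdot\bigl[(\tilde m H_p)\ast\rho_\delta\bigr]\,dxdt = \parint (Du\ast\rho_\delta)\cdot \tilde m H_p(t,x,\tilde m,D\tilde u)\,dxdt;
\]
combining $Du\ast\rho_\delta\to Du$ almost everywhere with the equi-integrability of the product $\tilde m H_p(\tilde m,D\tilde u)\cdot(Du\ast\rho_\delta)$ delivered by Step~1 together with the decomposition of Lemma \ref{sec:non-sing-cong-2} applied to $(\tilde u,\tilde m)$, Vitali's theorem yields convergence to $\parint Du\cdot \tilde m H_p(\tilde m,D\tilde u)\,dxdt$. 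Rearranging reproduces exactly \rife{ineq}.

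The principal obstacle is this final limit: because $Du$ has only $\parelle q$ integrability for $q<(N+2)/(N+1)$, plain weak-strong pairing against the $L^1$-convergent mollification $(\tilde m H_p)\ast\rho_\delta$ is unavailable, and the convergence must be extracted from the structural equi-integrability of the pointwise product $\tilde m H_p(\tilde m,D\tilde u)\cdot Du$ provided by Step~1, in the spirit of the arguments in \cite{Parma}.
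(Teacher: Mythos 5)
Your skeleton (mollify $\tilde m$, use it as a test function in \rife{eq:19}, substitute the Kolmogorov equation, identify the traces at $t=0$ and $t=T$) is the same as the paper's, but the proof collapses at Step~1, and that step is where essentially all of the content of the lemma lies. You claim that \rife{grow}, Young's inequality and condition (ii) give $\tilde m\,H_p(t,x,\tilde m,D\tilde u)\cdot Du\in\parelle1$. They do not. Writing
$\frac{\tilde m|D\tilde u|^{\beta-1}}{(\tilde m+\mu)^\alpha}|Du|
=\bigl(\tilde m\frac{|D\tilde u|^\beta}{(\tilde m+\mu)^\alpha}\bigr)^{1/\beta'}\bigl(\frac{\tilde m}{(\tilde m+\mu)^\alpha}|Du|^\beta\bigr)^{1/\beta}$,
Young's inequality leaves you with the genuinely \emph{crossed} quantity $\frac{\tilde m}{(\tilde m+\mu)^\alpha}|Du|^\beta\sim \tilde m^{1-\alpha}|Du|^\beta$ (resp.\ $\tilde m_\delta\frac{|Du|^\beta}{(m+\mu)^\alpha}$ after further splitting), which pairs the density of one solution with the gradient of the other. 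Condition (ii) only controls $m\,|Du|^\beta/(m+\mu)^\alpha$ and $\tilde m\,|D\tilde u|^\beta/(\tilde m+\mu)^\alpha$, i.e.\ each density against its \emph{own} gradient; there is no pointwise comparison between $\tilde m$ and $m$, so no choice of exponents in Young's inequality closes the estimate. Consequently the equi-integrability you invoke for Vitali's theorem in the last step has no source, and the finiteness of the crossed integral in \rife{ineq} — which is part of what must be proved — is assumed rather than established.

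The paper's proof is organized precisely around this obstruction and splits into two cases. For $\alpha\le 1$ it never shows the crossed term is integrable on its own: it bounds the \emph{combination} $H(t,x,m,Du)\tilde m_\delta-Du\cdot\tilde w_\delta$ from below by an $L^1$ function, absorbing $\sigma\,\tilde m_\delta^{1-\alpha}|Du|^\beta$ into the coercive term $c_0\,\tilde m_\delta|Du|^\beta/(m+\mu)^\alpha$ via the interpolation $\tilde m_\delta^{1-\alpha}\le(1-\alpha)\tilde m_\delta/(m+\mu)^\alpha+\alpha(m+\mu)^{1-\alpha}$, and then passes to the limit by Fatou. For $\alpha>1$ even this fails, and one must first \emph{prove} the uniform-in-$\delta$ bounds $\parint\tilde m_\delta|Du|^\beta/(m+\mu)^\alpha+\parint m_\delta|D\tilde u|^\beta/(\tilde m+\mu)^\alpha\le C$ by writing the inequality for both orderings of the two solutions, adding, choosing $\sigma$ small, and separately estimating $\parint m_\delta\,(|Du|^\beta/(m+\mu)^\alpha)\star\rho_\delta$ by taking $\tilde m=m_\delta$ and using the joint convexity of $(m,\xi)\mapsto(m+\mu)^{1-\alpha}|\xi|^\beta$; only then does one obtain the equi-integrability of $Du\cdot\tilde w_\delta$. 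Your proposal contains neither the absorption mechanism nor the bootstrap, so the limit $\delta\to0$ cannot be justified as written. (A lesser but real issue: the passage to the limit in the $F$ and $G$ terms also requires the monotonicity/growth structure \rife{F1}, \rife{G1}--\rife{G2} and the convexity of $f(s)s$, $g(s)s$ to produce $L^1$-convergent dominating sequences; "dominated convergence using the bounds of Step~1" is not enough there either.)
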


\begin{remark} The above Lemma applies in particular if $\nu_x= \delta_{m(T,x)}$, in which case $u$ is simply a subsolution of \rife{MFGu} and condition \rife{gll} is equivalent to  $G(x,m(T))m(T)\in \elle1$. 
\end{remark}

\proof  
Let $ \rho_\de(\cdot)$ be a sequence of standard  symmetric mollifiers in $\R^N$ and set 
$$
\tilde m_\de(t,x)= \tilde m(t)\star \rho_\de : = \int_{\mathbb R^N} \tilde m(t,y) \rho_\de(x-y)\, dy\,.
$$
Notice in particular that $\tmd, D\tmd \in \parelle\infty$ since $\tm\in \limitate1$. 
We also take a sequence of 1-d mollifiers $\xi_\vep(t)$ such that supp$(\xi_\vep)\subset (-\vep,0)$, and we set 
$$
\tilde m_{\de,\vep}:= \int_0^T \xi_\vep(s-t)\,\tmd(s)ds\,.
$$
Notice that this function vanishes near $t=0$, so we can take it as test function in the inequality satisfied by $u$. We get
\begin{equation}\label{ueq}
\begin{split} & 
 \parint u \,[\partial_t \tilde m_{\de,\vep}-\nu \Delta \tilde m_{\de,\vep}]\, dxdt  + \parint  H(t,x,m,Du)\tilde m_{\de,\vep}\,  dxdt
\\
& \quad \leq \parint  F(t,x,m)\tilde m_{\de,\vep}\,dxdt  + \into \int_{\R} G(x,\lambda)\, d\nu_x(\lambda)\tilde m_{\de,\vep}(T)\, dx 
\end{split}
\end{equation}
The first integral is equal to 
$$
\parint  [-\partial_s u_{\de,\vep} -\nu \Delta u_{\de,\vep}]\tm(s,y) \,  dsdy  
$$
where $u_{\de,\vep}(s,y)= \parint u(t,x)\xi_\vep(s-t)\rho_\de(x-y)\,dtdx$. Notice that this function vanishes near $s=T$. Thus using the equation of $\tm$ we have
\begin{align*}
& \parint  [-\partial_s u_{\de,\vep} -\nu \Delta u_{\de,\vep}]\tm(s,y) \,  dsdy
\\ & \quad = 
-\parint  \tm(s,y)\tb(s,y)\cdot D_y u_{\de,\vep} \,  dsdy + \into \tilde m_0(y) u_{\de,\vep}(0) dy\,,
\end{align*}
where $\tb=H_p(t,x,\tm, D\tu)$.  We shift the convolution kernels from $u$ to $\tilde m$ in the right-hand side and we use this equality in \rife{ueq}. We get   
\begin{equation}\label{prevep}
\begin{split} & 
-\parint   Du \cdot \tw_{\de,\vep} \, dtdx  + \parint  H(t,x,m,Du)\tilde m_{\de,\vep}\,  dxdt + \into  (\tilde m_0\star \rho_\de) \int_0^T u(t)\xi_\vep(-t) \, dt\,\,dx
\\ & \quad  
\leq  \parint  F(t,x,m)\tilde m_{\de,\vep}\,dxdt  + \into   \int_{\R} G(x,\lambda)\, d\nu_x(\lambda)\tilde m_{\de,\vep}(T)\, dx 
\end{split}
\end{equation}
where we denote  $\tw_\de= [(\tb\,\tm)\star \rho_\de]$ and $\tw_{\de,\vep}= \int_0^T \tw_\de(s)\, \xi_\vep(s-t)\,ds$.  
\vskip1em

Since now, we distinguish between the cases $\alpha\leq1 $ and $\alpha>1$. Recall that $\alpha\leq \frac{4(\beta-1)}\beta$ is always assumed, hence the latter case only happens for $1< \frac{4(\beta-1)}\beta$, i.e. $\beta >\frac 43$.
\vskip1em
{\bf (A) Case when $\mathbf \alpha\leq 1$.} \quad 
Let us deal with  the  first two integrals in \rife{prevep}. By assumption \rife{grow} we have
\begin{align*}
|(\tm\, \tb)\star \rho_\de| & \leq c_2 \tmd + c_2 \left(\left(\tm\, \frac{|D\tu |^{\beta}}{(\tm+\mu)^{\alpha}}\right)\star \rho_\de\right)^{\frac1{\beta'}} \left(\frac\tm{(\tm+\mu)^{\alpha}}\star \rho_\de\right)^{\frac1\beta}
\\ &
\leq c_2 \tmd + c_2 \left(\left(\tm\, \frac{|D\tu |^{\beta}}{(\tm+\mu)^{\alpha}}\right)\star \rho_\de\right)^{\frac1{\beta'}}  (\tm\star \rho_\de)^{\frac{1-\alpha}\beta}
\end{align*}
where we used $\alpha\leq 1$ in the latter inequality. Therefore, using also \rife{coerc} we estimate 
\begin{align*}
& H(t,x,m,Du)\tmd(s)- Du \cdot \tw_\de(s) \geq c_0 \tmd(s) \frac{|Du|^{\beta}}{(m+\mu)^{\alpha}}- c_1  \tmd(s) (1+m^{\frac\alpha{\beta-1}}) 
\\
& - c_2 \tmd(s) |Du| -
c_2 |Du|\, \left(\left(\tm(s)\, \frac{|D\tu(s) |^{\beta}}{(\tm(s)+\mu)^{\alpha}}\right)\star \rho_\de\right)^{\frac1{\beta'}}\tmd(s)^{\frac{1-\alpha}\beta}\end{align*}
which yields, by Young's inequality,
\begin{equation}\label{stimainf}
\begin{split}
& H(t,x,m,Du)\tmd(s)- Du \cdot \tw_\de(s) \geq \frac{c_0}2 \tmd(s) \frac{|Du|^{\beta}}{(m+\mu)^{\alpha}}- c\, \tmd(s) (1+m^{\frac\alpha{\beta-1}}) 
\\
&  - \sigma \, |Du|^\beta  \tmd(s)^{1-\alpha} - C_\sigma \,\left(\tm(s)\, \frac{|D\tu(s) |^{\beta}}{(\tm(s)+\mu)^{\alpha}}\right)\star \rho_\de \,.
\end{split}
\end{equation}
Since $\alpha\leq 1$, choosing $\sigma<\frac{c_0}2$   we have  
$$
\sigma \, \tmd^{1-\alpha} < \frac{c_0}2  \frac{\tmd}{(m+\mu)^{\alpha}} + \frac{c_0}2  (m+\mu)^{1-\alpha}\,,
$$
while
$$
\tmd  (1+m^{\frac\alpha{\beta-1}}) \leq c \,(1+ \tmd^{1+\frac{\alpha}{\beta-1}}+   m^{1+\frac{\alpha}{\beta-1}})\,.
$$
So we conclude from \rife{stimainf} that
\begin{align*}
& H(t,x,m,Du)\tmd(s)- Du \cdot \tw_\de(s) \geq  - c\,  (\tmd(s)^{1+\frac{\alpha}{\beta-1}}+m^{1+\frac\alpha{\beta-1}}) 
\\
&  - c\, (m+\mu)^{1-\alpha} \,  |Du|^\beta  - c \,\left(\tm(s)\, \frac{|D\tu(s) |^{\beta}}{(\tm(s)+\mu)^{\alpha}}\right)\star \rho_\de \,.
\end{align*}
Notice that   condition (ii) in Definition \ref{weaksol} implies  that $(m+\mu)^{1-\alpha} |Du|^\beta$   belongs to $\parelle1$. Due to the summability of all the above terms, we are allowed to use Fatou's lemma and deduce
\begin{align*}
\liminf_{\vep\to 0}  &  \parint  [H(t,x,m,Du)\tilde m_{\de,\vep}-Du \cdot \tw_{\de,\vep}]\,  dxdt \\
  = \liminf_{\vep\to 0}  & \int_0^T\into \int_0^T [H(t,x,m,Du)\tmd(s)- Du \cdot \tw_\de(s)]\xi_\vep(s-t)\, dsdxdt
\\
 & \quad  \geq \parint [H(t,x,m,Du)\tmd - Du \cdot \tw_\de ] \, dxdt\,,
\end{align*}
and in the same way 
\begin{equation}\label{liminfde}
\begin{split}
\liminf_{\de\to 0} &    \parint [H(t,x,m,Du)\tmd - Du \cdot \tw_\de ] \, dxdt 
\\
& \qquad \geq 
\parint [H(t,x,m,Du) \tm- Du\cdot H_p(t,x,\tm,D\tu)\tm]\, dxdt\,.
\end{split}
\end{equation}
Now we consider the remaining terms in \rife{prevep}, in particular the terms at $t=0$ and $t=T$. For $t=T$, we have
\begin{align*}
 & \into  \int_{\R} G(x,\lambda)\, d\nu_x(\lambda)\tilde m_{\de,\vep}(T)\, dx 
   = \into  \int_{\R} G(x,\lambda)\, d\nu_x(\lambda)\tmd(T) \,dx 
\\
&\qquad  +
\into \int_0^T \int_{\R} G(x,\lambda)\, d\nu_x(\lambda) [\tmd(s)-\tmd(T)] \xi_\vep(s-T)\, ds dx\,.
\end{align*}
Since $\tm \in \continue 1$ implies that $\int_0^T\ [\tmd(s)-\tmd(T)] \xi_\vep(s-T)\, ds$ converges  to zero uniformly as $\vep\to 0$,  the last integral will vanish as $\vep \to 0$.

For $t=0$, we recall that $\into u(t)\, (\tilde m_0\star \rho_\de)dx$  has a trace at $t=0$ from Lemma \ref{sec:case-non-singular}, and this trace is also continuous as $\de\to 0$ since $\tilde m_0$ is continuous. Therefore, as $\vep \to 0$ we obtain from \rife{prevep}
 \begin{equation}\label{postvep}
\begin{split} & 
\parint [H(t,x,m,Du)\tmd - Du \cdot \tw_\de ] \, dxdt + \into  (\tilde m_0\star \rho_\de) u(0) \,dx
\\
& \quad \leq   \int_0^T\into F(t,x,m)\tmd\, dxdt + \into  \int_{\R} G(x,\lambda)\, d\nu_x(\lambda)\tmd(T) \,dx 
\end{split}
\end{equation}
We apply assumptions \rife{F1} and \rife{G2} to deal with the last two terms. Indeed, if $c_4$ is the constant in \rife{F0}, we have 
$  c_4 \tmd \le  F(t,x,m) \tmd \le   (F(m) +  |c_4|) \tmd$ implies that
\begin{displaymath}
  \begin{split}
   c_4 \tmd \le  F(t,x,m) \tmd & \le    (F(t,x,m) +  |c_4|)  m  \mathds{1}_{m\ge \tilde m_\delta }  + \left( \frac 1 \lambda f(m) +\kappa  +  |c_4| \right) \tmd  
\mathds{1}_{m< \tilde m_\delta }\\
 & \le    (F(t,x,m) +  |c_4|)  m    + \left( \frac 1 \lambda f(\tmd) +\kappa+  |c_4|\right) \tmd  
\\
 & \le    F(t,x,m)m     +        \frac 1 \lambda f(\tmd) \tmd  +  C (m+\tmd) .
  \end{split}
\end{displaymath}
Hence
\begin{align*}
   c_4 \tmd \le F(t,x,m)\tmd & \leq F(t,x,m)m+ \frac1\lambda \, f(\tmd)\tmd + C \, (m+\tmd)
\\ & \leq  F(t,x,m)m+ \frac1\lambda \, (f(m)m )\star \rho_\de + C \, (m+\tmd), 
\end{align*}
and since $F(t,x,m)m\in \parelle1 $ (and the same holds for $f(m)m$ by \rife{F1}), we conclude that
the sequence  $F(t,x,m)\tmd$ is bounded above and below by convergent sequences in $\parelle1$, which allows us to pass to the limit as $\de\to 0$.  We proceed similarly for the term with $G$. Indeed, by \rife{G1},
$$
\int_{\R} G(x,\lambda)\, d\nu_x(\lambda)\tmd(T)   \leq \int_{\R} G(x,\lambda)\lambda \, d\nu_x(\lambda) +   G(x,\tmd(T)) \tmd(T)\quad \hbox{for a.e.   $x\in \Omega$,}
$$
and using \rife{G2} we deduce
\begin{align*}
\int_{\R} G(x,\lambda)\, d\nu_x(\lambda)\tmd(T) & \leq \int_{\R} G(x,\lambda)\lambda \, d\nu_x(\lambda) + \frac1\lambda \, g(\tmd(T))\tmd(T) + C \, \tmd(T)
\\ & \leq  \int_{\R} G(x,\lambda)\lambda \, d\nu_x(\lambda)+ \frac1\lambda \, (g(\tm(T))\tm(T) )\star \rho_\de  + C \, \tmd(T)\,.
\end{align*}
Thanks to \rife{gll} and since $G(x,\tm(T)) \tm(T)\in \elle1$ (and and the same holds for $g(\tm(T))\tm(T)$ by \rife{G2}), we can handle this term too.
Finally, passing to the limit in \rife{postvep} and  using also \rife{liminfde}, we obtain \rife{ineq}.
 
\vskip1em
{\bf (B) Case when $\mathbf \alpha >1$.} \quad First recall from Remark \ref{sec:running-assumptions-2} that  $\alpha\leq \beta$. Moreover, since $1\geq \beta-1$, we also have $\alpha>\beta-1$, i.e. $\beta-\alpha<1$.

We estimate  now differently the first two terms in \rife{prevep}.   First of all, by Young's inequality, we have
 (omitting to write that $\tmd, D\tu$ are evaluated at $s$ and $m,Du$ at $t$)
$$
Du\cdot \tw_\de   \leq \sigma \, \frac{ \max(m+\mu, \tmd)}{\tmd^{\frac{\beta-\alpha}{\beta-1}}}\, |\tw_\de|^{\beta'} + C_\sigma \, \frac{\tmd^{\beta-\alpha}}{\max(m+\mu,\tmd)^{\beta-1} } |Du|^\beta
$$
and using  $\beta\geq \alpha$ and $\alpha>1$ this yields
\begin{equation}\label{stide0}
Du\cdot \tw_\de   \leq \sigma \, \frac{ m+\mu + \tmd}{\tmd^{\frac{\beta-\alpha}{\beta-1} }}\, |\tw_\de|^{\beta'} + C_\sigma \, (m+\mu)^{1-\alpha} |Du|^\beta\,.
\end{equation}
By \rife{grow},
\begin{align*}
|(\tm\, \tb)\star \rho_\de|  & \leq c_2 \tmd + c_2  \left(\tm\, \frac{|D\tu |^{\beta-1}}{(\tm+\mu)^{\alpha}}\right)\star \rho_\de 
\\
& \leq c_2 \tmd + c_2 \left(\left( \frac{|D\tu |^{\beta}}{(\tm+\mu)^{\alpha}}\right)\star \rho_\de\right)^{\frac1{\beta'}} \left(\frac{\tm^\beta}{(\tm+\mu)^{\alpha}}\star \rho_\de\right)^{\frac1\beta}
\\ & 
\leq c_2 \tmd + c_2 \left(\left( \frac{|D\tu |^{\beta}}{(\tm+\mu)^{\alpha}}\right)\star \rho_\de\right)^{\frac1{\beta'}} \tmd^{\frac{\beta-\alpha}\beta}
\end{align*}
where we used $\beta-\alpha\leq 1$ in the last inequality. We deduce that
\be\label{new1}
\frac{|\tw_\de|^{\beta'}}{\tmd^{\frac{\beta-\alpha}{\beta-1} }}  \leq C \tmd^{\frac\alpha{\beta-1}}+ C \left( \frac{|D\tu |^{\beta}}{(\tm+\mu)^{\alpha}}\right)\star \rho_\de\,.
\ee
Combining this information with \rife{stide0}, we deduce that
\begin{equation}\label{stide1}
\begin{split}Du\cdot \tw_\de   & \leq \sigma \, C [m+\mu + \tmd] \left( \frac{|D\tu |^{\beta}}{(\tm+\mu)^{\alpha}}\right)\star \rho_\de \\
& \quad  + C \, [1+ m^{1+\frac\alpha{\beta-1} }+ \tmd^{1+\frac\alpha{\beta-1} }]  + C_\sigma \, (m+\mu)^{1-\alpha} |Du|^\beta\,.
\end{split}
\end{equation}
We first use this inequality to get
\begin{align*}Du\cdot \tw_{\de,\vep}   & \leq \sigma \, C \int_0^T [m+\mu + \tmd] \left( \frac{|D\tu |^{\beta}}{(\tm+\mu)^{\alpha}}\right)\star \rho_\de \, \xi_\vep(s-t)ds\\
& \quad  + C    [1+ m^{1+\frac\alpha{\beta-1} }+ \int_0^T \tmd^{1+\frac\alpha{\beta-1} }\xi_\vep(s-t)ds]  + C_\sigma \, (m+\mu)^{1-\alpha} |Du|^\beta\,.
\end{align*}
Since $\into m(t,x) dx = \into m_0(x) dx$ and $\frac{|D\tu  |^{\beta}}{(\tm+\mu)^{\alpha}}\in \parelle1$,  
$$
\int_0^T m(t) \left( \left(\frac{|D\tu(s) |^{\beta}}{(\tm(s)+\mu)^{\alpha}}\right)\star \rho_\de\right)\,\xi_\vep(s-t)ds\quad \mathop{\to}^{\vep \to 0} \quad m(t) \left(\frac{|D\tu (t)|^{\beta}}{(\tm(t)+\mu)^{\alpha}}\right)\star \rho_\de\qquad \hbox{in $\parelle1$.}
$$
Using also  that $\tmd(s)$ is continuous, we deduce that $Du\cdot \tw_{\de,\vep} $ is dominated by a $L^1$-convergent sequence, so we are allowed to take  $\vep\to 0$ in \rife{prevep}, obtaining 
\begin{equation}\label{postvep2}
\begin{split} & 
\parint [H(t,x,m,Du)\tmd - Du \cdot \tw_\de ] \, dxdt + \into  (\tilde m_0\star \rho_\de) u(0) \,dx
\\
& \quad \leq   \int_0^T\into F(t,x,m)\tmd\, dxdt + \into  \int_{\R}G(x,\lambda)d\nu_x(\lambda)\tmd(T) \,dx \,.
\end{split}
\end{equation}
Thanks to \rife{stide1}, and since the right-hand side is bounded as in the previous case, the above inequality also implies
\begin{align*} & 
\parint H(t,x,m,Du)\tmd  \, dxdt \leq   C+ \sigma \, C \parint  [m+\mu + \tmd] \left( \frac{|D\tu |^{\beta}}{(\tm+\mu)^{\alpha}}\right)\star \rho_\de \, dxdt \\
& \quad  + C \parint  [1+ m^{1+\frac\alpha{\beta-1} }+ \tmd^{1+\frac\alpha{\beta-1} }] \, dxdt  + C_\sigma\parint  (m+\mu)^{1-\alpha} |Du|^\beta \, dxdt \,.
\end{align*}
On account of  the bounds on $m, \tm $ in $\parelle{1+\frac\alpha{\beta-1}}$ and using the properties (ii) in Definition \ref{weaksol}, the last line of the above inequality is uniformly bounded. Therefore, using \rife{coerc}  we deduce that  
\begin{align*}   
\parint \tmd\, \frac{|Du|^\beta}{(m+\mu)^\alpha}   \, dxdt & \leq C+ \sigma \,C  \parint  m_\de \frac{|D\tu |^{\beta}}{(\tm+\mu)^{\alpha}}\, dxdt 
\\ & \quad + \sigma \,C\parint  \tmd \left( \frac{|D\tu |^{\beta}}{(\tm+\mu)^{\alpha}}\right)\star \rho_\de \, dxdt\,,
\end{align*}
where $m_\de= m\star \rho_\de$. We obtain a similar inequality reversing the roles of $(u,m)$ and $(\tu,\tm)$, and by addition we get 
\begin{align*}   &
\parint  m_\de \frac{|D\tu |^{\beta}}{(\tm+\mu)^{\alpha}}\, dxdt +\parint \tmd\, \frac{|Du|^\beta}{(m+\mu)^\alpha}   \, dxdt   \\
& \qquad \leq C+ \sigma \,C  \left\{\parint  m_\de \frac{|D\tu |^{\beta}}{(\tm+\mu)^{\alpha}}\, dxdt + \parint \tmd\, \frac{|Du|^\beta}{(m+\mu)^\alpha}   \, dxdt \right\}
\\ & \quad + \sigma \,C\left\{ \parint  \tmd \left( \frac{|D\tu |^{\beta}}{(\tm+\mu)^{\alpha}}\right)\star \rho_\de \, dxdt + \parint  m_\de \left( \frac{|Du |^{\beta}}{( m+\mu)^{\alpha}}\right)\star \rho_\de \, dxdt\right\}\,,
\end{align*}
hence choosing $\sigma$ sufficiently small  we conclude that
\begin{equation}\label{crossde}
\begin{split}   &
\parint  m_\de \frac{|D\tu |^{\beta}}{(\tm+\mu)^{\alpha}}\, dxdt +\parint \tmd\, \frac{|Du|^\beta}{(m+\mu)^\alpha}   \, dxdt   \\
& \quad \leq C + C \left\{ \parint  \tmd \left( \frac{|D\tu |^{\beta}}{(\tm+\mu)^{\alpha}}\right)\star \rho_\de \, dxdt + \parint  m_\de \left( \frac{|Du |^{\beta}}{( m+\mu)^{\alpha}}\right)\star \rho_\de \, dxdt\right\}\,.
\end{split}
\end{equation}
We wish now to estimate last terms in the above inequality. To this purpose,   we  use \rife{postvep2} with $\tilde m$ replaced by $m_\de$ (and so $\tilde w$ by $ (mH_p(t,x,Du))\star \rho_\de$). Since the terms with $F$ and $G$ can be estimated as before, we get
$$
\parint  m_\de\, [H(t,x,m,Du) \star \rho_\de]  \, dxdt \leq \parint  [Du\star \rho_\de] \cdot [(mH_p(t,x,Du))\star \rho_\de] \, dxdt + C\,.
$$
Therefore, using \rife{coerc} we obtain
\begin{align*} & 
c_0 \parint  m_\de [\left( \frac{|Du |^{\beta}}{( m+\mu)^{\alpha}}\right)\star \rho_\de] \, dxdt \leq  \parint  [Du\star \rho_\de] \cdot [(mH_p(t,x,Du))\star \rho_\de] \, dxdt + C \\
&\qquad  \leq c_2\parint  |Du\star \rho_\de |\,  \left(m\frac{|Du|^{\beta-1}}{(m+\mu)^\alpha}\right)\star \rho_\de  \, dxdt  + C \parint  \left |Du\star \rho_\de\right| \, m_\de \, dxdt 
\\ & 
\qquad  \leq c_2 \parint  |Du\star \rho_\de |\,  \left(\left(\frac{|Du|^{\beta}}{(m+\mu)^\alpha}\right)\star \rho_\de\right)^{\frac1{\beta'}} m_\de^{\frac{\beta-\alpha}\beta} \, dxdt  + C \parint  \left |Du\star \rho_\de\right| \, m_\de \, dxdt
\\ & \qquad \leq \frac{c_0}2  \parint  (m+\mu)\star \rho_\de \left( \frac{|Du |^{\beta}}{( m+\mu)^{\alpha}}\right)\star \rho_\de \, dxdt\\ & \qquad \quad + C \parint  |Du\star \rho_\de|^\beta\, ((m+\mu)\star \rho_\de)^{1-\alpha} \, dxdt\ + C\parint 1+m_\de^{1+\frac\alpha{\beta-1}}dxdt \,.
\end{align*}
Hence we conclude that
$$
\parint  m_\de \left( \frac{|Du |^{\beta}}{( m+\mu)^{\alpha}}\right)\star \rho_\de \, dxdt  \leq C \parint  |Du\star \rho_\de|^\beta\, ((m+\mu)\star \rho_\de)^{1-\alpha} \, dxdt\ + C \,.
$$
Now we observe that the function $(m,\xi) \mapsto (m+\mu)^{1-\alpha}|\xi|^\beta$ is convex as  a function of two variables, therefore
$$
 |Du\star \rho_\de|^\beta\, ((m+\mu)\star \rho_\de)^{1-\alpha} \leq  \left((m+\mu)^{1-\alpha} |Du|^{\beta}\right)\star \rho_\de
$$
which is bounded since $(m+\mu)^{1-\alpha} |Du|^\beta\in \parelle1$. We deduce the uniform bound (with respect to $\de$)
$$
\parint  m_\de \left( \frac{|Du |^{\beta}}{( m+\mu)^{\alpha}}\right)\star \rho_\de \, dxdt  \leq C\,,
$$
and similarly for $\tm$. 
Going back to \rife{crossde}, this also implies the bound
$$
\parint  m_\de \frac{|D\tu |^{\beta}}{(\tm+\mu)^{\alpha}}\, dxdt +\parint \tmd\, \frac{|Du|^\beta}{(m+\mu)^\alpha}   \, dxdt \leq C\,.
$$
Now, reasoning as in \rife{stide0}--\rife{stide1}, 
\begin{align*}
\int\!\!\!\int_E |Du\cdot \tw_\de  |  & \leq  \left(\int\!\!\!\int_E \frac{  (m+\mu+ \tmd)}{\tmd^{\frac{\beta-\alpha}{\beta-1}}}\, |\tw_\de|^{\beta'} \right)^{\frac1{\beta'}}\left( \int\!\!\!\int_E (m+\mu)^{1-\alpha} |Du|^\beta\right)^{\frac1\beta}
\\  & \leq  \left(C+ \int\!\!\!\int_E  [m+\mu + \tmd] \left( \frac{|D\tu |^{\beta}}{(\tm+\mu)^{\alpha}}\right)\star \rho_\de\right)^{\frac1{\beta'}}\left( \int\!\!\!\int_E (m+\mu)^{1-\alpha} |Du|^\beta\right)^{\frac1\beta}\,.
\end{align*}
The bounds previously established yield
$$
\int\!\!\!\int_E |Du\cdot \tw_\de  |   \leq  C \left( \int\!\!\!\int_E(m+\mu)^{1-\alpha} |Du|^\beta\right)^{\frac1\beta}
$$
and since the set  $E$ is arbitrary and $(m+\mu)^{1-\alpha} |Du|^\beta\in \parelle1$, we deduce  the equi-integrability of $Du\cdot \tw_\de $. Finally, this allows us to pass to the limit in \rife{postvep2} and to obtain \rife{ineq}.
\qed

A similar Lemma holds for the case of singular congestion, suitably adapted to the formulation of this case.

\begin{lemma}\label{compare-s}
Assume that $H$ satisfies assumptions \rife{coerc}-\rife{grow} with $\mu=0$, and that \rife{F1}--\rife{G2} hold true.  
Let  $u\in \limitate1$  satisfy 
\begin{align*}
\int_0^T \into u\, \vfi_t\, dxdt & - \nu \int_0^T \into u\,\Delta\vfi\, dxdt 
+ \int_0^T\into H(t,x,m,Du)\mathds{1}_{\{m>0\}}\,\vfi\, dxdt
\\
& \leq  \int_0^T \into F(t,x,m)\vfi\, dxdt + \into \int_{\R} G(x,\lambda)d\nu_x(\lambda)\,\vfi(T)\, dx
\end{align*}
for some family of probability measures $\{\nu_x\}$ (weakly$-*$ measurable w.r.t. $x$) such that  \rife{gll} is satisfied, and  let $m\in \continue1$ be a solution of  \rife{eq:37}. Assume that $(u,m)$ satisfy the conditions (ii) 
in Definition \ref{defsing}.
Then 
\begin{equation}\label{ineq-s}
\begin{split}
\langle \tilde m_0\,, u(0) \rangle & \leq  \into \int_{\R} G(x,\lambda)d\nu_x(\lambda)\, \tilde m(T)\,dx + \parint F(t,x,m)\tilde m\, dxdt
\\ & + \parint \left[ \tilde m\, H_p(t,x,\tilde m, D\tilde u)\cdot Du- \tilde m \,H(t,x,m,Du)\right]\, \mathds{1}_{\{m>0\,,\, \tm>0\}}dxdt
\end{split}
\end{equation}
for any couple $(\tilde u, \tilde m)$ satisfying the same conditions as $(u,m)$.
\end{lemma}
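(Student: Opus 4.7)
My plan is to follow the blueprint of Lemma \ref{compare}, the novelty being to track the indicators $\mathds{1}_{\{m>0\}}$ and $\mathds{1}_{\{\tm>0\}}$ attached to the Hamiltonian terms. The key enabling facts from condition (ii) of Definition \ref{defsing} are that $Du=0$ a.e.\ on $\{m=0\}$ and $D\tu=0$ a.e.\ on $\{\tm=0\}$, together with $H(t,x,0,0)=0$ (from \rife{eq:1} and \rife{eq:41}); thus $H(t,x,m,Du)\mathds{1}_{\{m>0\}}$ and $\tm\, H_p(t,x,\tm,D\tu)\mathds{1}_{\{\tm>0\}}$ are canonical extensions by zero of well-defined expressions, and they agree a.e.\ with their unrestricted counterparts.

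I would first mollify $\tm$ exactly as in Lemma \ref{compare}: set $\tmd(t,x)=\tm(t)\star\rho_\delta(x)$ and $\tilde m_{\delta,\vep}(t,x)=\int_0^T\xi_\vep(s-t)\tmd(s,x)\,ds$ with $\xi_\vep$ supported in $(-\vep,0)$. Using the nonnegative function $\tilde m_{\delta,\vep}$ as a test function in the subsolution inequality for $u$, and replacing the parabolic contributions via the Kolmogorov equation \rife{eq:37} for $\tm$ tested against a symmetric mollification of $u$, one arrives at an analogue of \rife{prevep}:
\begin{equation*}
\begin{split}
&-\parint Du\cdot\tilde w_{\delta,\vep}\,dxdt+\parint H(t,x,m,Du)\mathds{1}_{\{m>0\}}\tilde m_{\delta,\vep}\,dxdt\\
&\qquad +\into (\tm_0\star\rho_\delta)\int_0^T u(t)\xi_\vep(-t)\,dt\,dx\\
&\qquad\qquad \le \parint F(t,x,m)\tilde m_{\delta,\vep}\,dxdt+\into\int_{\R}G(x,\lambda)d\nu_x(\lambda)\,\tilde m_{\delta,\vep}(T)\,dx,
\end{split}
\end{equation*}
where $\tilde w_\delta=\bigl(\tm\, H_p(t,x,\tm,D\tu)\mathds{1}_{\{\tm>0\}}\bigr)\star\rho_\delta$.

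\textbf{Core estimates.} The central step is the pointwise control of $H(t,x,m,Du)\mathds{1}_{\{m>0\}}\tmd-Du\cdot\tilde w_\delta$ from below, split into $\alpha\le 1$ and $\alpha>1$ as in Lemma \ref{compare}. All the computations translate verbatim once one replaces $(m+\mu)^{-\alpha}$ by $m^{-\alpha}\mathds{1}_{\{m>0\}}$ and $(\tm+\mu)^{-\alpha}$ by $\tm^{-\alpha}\mathds{1}_{\{\tm>0\}}$; the various remainders involve $m^{1-\alpha}|Du|^\beta\mathds{1}_{\{m>0\}}$, which lies in $\parelle1$ by conditions \rife{mDu}, together with $m^{1+\alpha/(\beta-1)}$ and $\tmd^{1+\alpha/(\beta-1)}$ which are bounded in $\parelle1$ from the same condition.

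\textbf{Main obstacle.} The delicate point is the equi-integrability bootstrap for $Du\cdot\tilde w_\delta$ in the case $\alpha>1$, which in Lemma \ref{compare} relied on the two-sided crossed bound \rife{crossde} together with the convexity inequality $|Du\star\rho_\delta|^\beta((m+\mu)\star\rho_\delta)^{1-\alpha}\le \bigl((m+\mu)^{1-\alpha}|Du|^\beta\bigr)\star\rho_\delta$. For $\mu=0$ this bound persists upon extending the convex map $(m,\xi)\mapsto m^{1-\alpha}|\xi|^\beta$ to $[0,\infty)\times\R^N$ by $+\infty$ when $m=0,\,\xi\ne 0$ and by $0$ when $\xi=0$: since $Du\cdot\tilde w_\delta$ vanishes on $\{m=0\}$ and $m^{1-\alpha}|Du|^\beta\mathds{1}_{\{m>0\}}\in\parelle1$ by \rife{mDu}, the bootstrap produces the required uniform bounds. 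Once equi-integrability is secured, I would pass to the limit first in $\vep\to 0$ and then in $\delta\to 0$: Fatou's lemma yields $\parint H(t,x,m,Du)\mathds{1}_{\{m>0\}}\tm\,dxdt$ on the left-hand side and, combined with \rife{coerc}--\rife{grow}, recovers the crossed Hamiltonian term of \rife{ineq-s}; the $F$- and $G$-integrals are sandwiched in $\parelle1$ through \rife{F1}--\rife{G2} exactly as in Lemma \ref{compare}; and the initial term identifies with $\langle\tm_0,u(0)\rangle$ via the $BV$ trace of Lemma \ref{sec:case-non-singular}. The indicator $\mathds{1}_{\{m>0,\tm>0\}}$ on the right-hand side of \rife{ineq-s} appears because $\tm$ multiplies both Hamiltonian terms and $Du$ vanishes where $m=0$.
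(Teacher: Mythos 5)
Your proposal is correct and follows essentially the same route as the paper's proof: mollify $\tm$ in space and time, derive the analogue of \rife{prevep} with $\tilde w_\delta=(\tm H_p(t,x,\tm,D\tu)\mathds{1}_{\{\tm>0\}})\star\rho_\delta$, split into $\alpha\le 1$ and $\alpha>1$, exploit $Du=0$ a.e.\ on $\{m=0\}$ in the Young inequalities, and handle the convolution--convexity step via the lower semicontinuous extension of $(p,m)\mapsto m^{1-\alpha}|p|^\beta$ (the paper's function $\Psi$). The points you single out as the key enabling facts and the main obstacle are exactly the ones the paper addresses.
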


\begin{proof}
  The proof follows the same lines as that of Lemma~\ref{compare}.\\
{\bf (A) Case when $\mathbf \alpha\leq 1$.}\quad
Proceeding as in the proof of Lemma ~\ref{compare}, we obtain the counterparts of (\ref{ueq})
\begin{equation}\label{eq:23}
\begin{split} & 
 \parint u \,[\partial_t \tilde m_{\de,\vep}-\nu \Delta \tilde m_{\de,\vep}]\, dxdt  + \parint  H(t,x,m,Du)\tilde m_{\de,\vep}\, \mathds{1}_{\{m>0\}} \, dxdt
\\
& \quad \leq \parint  F(t,x,m)\tilde m_{\de,\vep}\,dxdt  + \into \int_{\R} G(x,\lambda)d\nu_x(\lambda)\tilde m_{\de,\vep}(T)\, dx 
\end{split}
\end{equation}
and of (\ref{prevep}): 
\begin{equation}\label{eq:24}
\begin{split} & 
-\parint   Du \cdot \tw_{\de,\vep} \, dtdx  + \parint  H(t,x,m,Du)\tilde m_{\de,\vep}\,  \mathds{1}_{\{m>0\}}\, dxdt + \into  (m_0\star \rho_\de) \int_0^T u(t)\xi_\vep(-t) \, dt\,\,dx
\\ & \quad  
\leq  \parint  F(t,x,m)\tilde m_{\de,\vep}\,dxdt  + \into  \int_{\R} G(x,\lambda)d\nu_x(\lambda)\tilde m_{\de,\vep}(T)\, dx 
\end{split}
\end{equation}
where we denote  $\tw_\de= [( \tm \, H_p(t,x,\tm, D\tu) \,\mathds{1}_{\{\tm>0\}} )\star \rho_\de]$ and $\tw_{\de,\vep}= \int_0^T \tw_\de(s)\, \xi_\vep(s-t)\,ds$.  
Note that  $\tw_\de= [( \tm \, H_p(t,x,\tm, D\tu)  )\star \rho_\de]$ if $\alpha<1$. Set $\tb=  H_p(t,x,\tm, D\tu) \,\mathds{1}_{\{\tm>0\}} $.
We deduce from Assumption (\ref{grow}) that 
\begin{align*}
|(\tm\, \tb)\star \rho_\de| & \leq c_2 \tmd + c_2 \left(\left(\tm\, \frac{|D\tu |^{\beta}}{\tm^{\alpha}} \mathds{1}_{\{\tm>0\}}\,        \right)\star \rho_\de\right)^{\frac1{\beta'}} \left(\frac\tm{\tm^{\alpha}}\star \rho_\de\right)^{\frac1\beta}
\\ &
\leq c_2 \tmd + c_2 \left(\left(\tm ^{1-\alpha} \, |D\tu |^{\beta}    \mathds{1}_{\{\tm>0\}}  \right)
\star \rho_\de\right)^{\frac1{\beta'}}  (\tm\star \rho_\de)^{\frac{1-\alpha}\beta}
\end{align*}
where we used $\alpha\leq 1$ in the latter inequality. Therefore, using also \rife{coerc} we estimate 
\begin{equation}
  \label{eq:30}
  \begin{split}
& H(t,x,m,Du)\mathds{1}_{\{m>0\}}\tmd(s)- Du \cdot \tw_\de(s) \geq 
c_0 \tmd(s) \frac{|Du|^{\beta}}{m^{\alpha}}   \mathds{1}_{\{m>0\}}  - c_1  \tmd(s) (1+m^{\frac\alpha{\beta-1}}) 
\\
& - c_2 \tmd(s) |Du| -
c_2 |Du|\, \left(\left(\tm(s) ^{1-\alpha} \, |D\tu(s) |^{\beta}  \mathds{1}_{\{\tm(s)>0\}}\right) \star \rho_\de\right)^{\frac1{\beta'}}\tmd(s)^{\frac{1-\alpha}\beta}.    
  \end{split}
\end{equation}
We now use the fact that $Du=0$ a.e. in $\{m=0\}$. From Young inequalities, we deduce 
\begin{equation}
  \label{eq:26}
  c_2 \tmd(s) |Du|= 
  c_2 \tmd(s) |Du|  \mathds{1}_{\{m>0\}}  \le \frac {c_0}2 \tmd(s)  \frac {|Du|^\beta}{m^\alpha}  \mathds{1}_{\{m>0\}} + C \tmd(s)  m^{\frac \alpha {\beta -1}},
\end{equation}
and
\begin{equation}
  \label{eq:27}
\begin{split}  
  c_2 |Du|\, \left(\left(\tm(s) ^{1-\alpha} \, |D\tu(s) |^{\beta} \mathds{1}_{\{\tm(s)>0\}} \right) \star \rho_\de\right)^{\frac1{\beta'}}\tmd(s)^{\frac{1-\alpha}\beta}
\\    \le  \sigma |Du|^\beta  \mathds{1}_{\{m>0\}}  \tmd(s)^{1-\alpha} 
    + C_\sigma  \left(\tm(s) ^{1-\alpha} \, |D\tu(s) |^{\beta} \mathds{1}_{\{\tm(s)>0\}} \right)  \star \rho_\de ,    
  \end{split}
\end{equation}
and if $0<\sigma\le c_0/2$,
\begin{equation}
\label{eq:28}
\sigma  \mathds{1}_{\{m>0\}}  \tmd(s)^{1-\alpha} \le \frac {c_0}2  
\frac {\tmd(s)}{m^\alpha}  \mathds{1}_{\{m>0\}} +\frac {c_0}2   m^{1-\alpha}  \mathds{1}_{\{m>0\}},  
\end{equation}
because $0<\alpha\le 1$.  Finally,
\begin{equation}
\label{eq:29}
\tmd(s)  m^{\frac \alpha {\beta -1}} \le C\left( 1 + m^{1+\frac \alpha {\beta -1}}   + \tmd(s)^{1+\frac \alpha {\beta -1}}   \right).  
\end{equation}
Combining~(\ref{eq:30}) with (\ref{eq:26})-(\ref{eq:29}), we get 
\begin{align*}
& H(t,x,m,Du)\tmd(s)- Du \cdot \tw_\de(s) \geq  - C\,  \left(1+ \tmd(s)^{1+\frac{\alpha}{\beta-1}}+m^{1+\frac\alpha{\beta-1}}\right) 
\\
&  - \frac {c_0} 2 \, m^{1-\alpha} \,  |Du|^\beta\, \mathds{1}_{\{m>0\}}   - c \,\left(   \tm(s)  ^{ 1 -\alpha}\, |D\tu(s) |^{\beta} \mathds{1}_{\{\tm(s)>0\}} \right)\star \rho_\de \,.
\end{align*}
The conclusion of the proof is exactly as for Lemma \ref{compare}.
\vskip1em
{\bf (B) Case when $\mathbf \alpha >1$.} \quad
We start from (\ref{eq:24}), and proceed differently for the first  two terms. From \rife{grow} we have  $|\tilde b| \leq  c_2 (1+ |D\tilde u|^{\beta-1}\tilde m^{-\alpha}\mathds{1}_{\{\tm >0\}})$, so we estimate $\tilde w_\de$ as in \rife{new1} of Lemma \ref{compare} and we get
\begin{displaymath}
\frac {|\tw_\de|}{ \tmd ^{\frac {\beta-\alpha} {\beta-1}}}\le C \tmd ^{\frac {\alpha} {\beta-1}}+C 
\left( \frac{|D\tu |^{\beta}}{\tm^{\alpha}} \mathds{1}_{\{\tm>0\}}\right)\star \rho_\de.  
\end{displaymath}
Since $Du=0$ a.e. if $m=0$, we can use \rife{stide0} with $\mu=0$ and, similarly as in Lemma \ref{compare}, we get at 
\begin{displaymath}
  \begin{split}
    Du\cdot \tw_\de =\;&Du\cdot  \frac { \tw_\de}  {\tmd ^{\frac {\beta-\alpha} {\beta}}}   \tmd ^{\frac {\beta-\alpha} {\beta}}  \mathds{1}_{\{m>0\}}\\
    \le \;  & \sigma C (\tmd+ m  \mathds{1}_{\{m>0\}}) \left( \frac{|D\tu |^{\beta}}{\tm^{\alpha}} \mathds{1}_{\{\tm>0\}}\right)\star \rho_\de
    +C\left(1+  m^{1+\frac \alpha {\beta-1} } +\tmd^{1+\frac \alpha {\beta-1} } \right)\\ &+C_\sigma    m ^{1-\alpha}    \mathds{1}_{\{m>0\} }  |Du|^\beta .
  \end{split}
\end{displaymath}
Thanks to this estimate, we can proceed exactly as in the proof of Lemma~\ref{compare} (when $\alpha>1$), and obtain 
\begin{equation}\label{eq:25}
\begin{split} & 
\parint [H(t,x,m,Du)  \mathds{1}_{\{m>0\} }  \tmd - Du \cdot \tw_\de ] \, dxdt + \into  (\tilde m_0\star \rho_\de) u(0) \,dx
\\
& \quad \leq   \int_0^T\into F(t,x,m)\tmd\, dxdt + \into  \int_{\R} G(x,\lambda)d\nu_x(\lambda)\tmd(T) \,dx \,.
\end{split}
\end{equation}
and, after some steps,
\begin{equation}\label{eq:35}
\begin{split}   &
\parint  m_\de \frac{|D\tu |^{\beta}}{\tm^{\alpha}} \mathds{1}_{\{\tm>0\} }\, dxdt +\parint \tmd\, \frac{|Du|^\beta}{m^\alpha}  \mathds{1}_{\{m>0\} } \, dxdt   \\
& \quad \leq C + C \left\{ \parint  \tmd \left( \frac{|D\tu |^{\beta}}{\tm^{\alpha}} \mathds{1}_{\{\tm>0\} }\right)\star \rho_\de \, dxdt + \parint  m_\de \left( \frac{|Du |^{\beta}}{ m^{\alpha}}  \mathds{1}_{\{m>0\} }\right)\star \rho_\de \, dxdt\right\}\,.
\end{split}
\end{equation}
To estimate the last two terms in (\ref{eq:35}), we use (\ref{eq:25}) where we replace $\tm$ with $m_\delta$ and proceed as above, 
\begin{align*} & 
c_0 \parint  m_\de \left( \frac{|Du |^{\beta}   \mathds{1}_{\{m>0\} } }{ m^{\alpha}}\right)\star \rho_\de \, dxdt
 \leq  \parint  Du\star \rho_\de \cdot (mH_p(t,x,Du)  \mathds{1}_{\{m>0\} })\star \rho_\de \, dxdt + C \\
&\qquad  \leq c_2\parint  |Du\star \rho_\de |\,  \left(m^{1-\alpha} |Du|^{\beta-1}     \mathds{1}_{\{m>0\} } \right)\star \rho_\de  \, dxdt  + C 
\\ & 
\qquad  \leq c_2 \parint  |Du\star \rho_\de |\,  \left(\left(\frac {|Du|^{\beta}}{m^\alpha}  \mathds{1}_{\{m>0\} }\right)\star \rho_\de\right)^{\frac1{\beta'}}
  (m\star \rho_\de)^{\frac{\beta-\alpha}\beta}  \, dxdt  + C
  \end{align*}
  Let us observe that $m\star \rho_\de =0 \Rightarrow Du\star \rho_\de =0$ since  $Du=0$ at almost every point where $m=0$. This means that last integral in the above inequality is restricted to the set $\{m\star \rho_\de>0\}$ (even in the limiting case $\beta=\alpha$). By Young's inequality, we deduce that
\begin{equation}
  \label{eq:31}
 \parint  m_\de \left( \frac{|Du |^{\beta}}{m^{\alpha}} \mathds{1}_{\{m>0\} }   \right)\star \rho_\de \, dxdt\le   C \parint  |  Du \star \rho_\de|^\beta\, (m\star \rho_\de)^{1-\alpha}   \mathds{1}_{\{m\star \rho_\de>0\} }   \, dxdt\ + C .
\end{equation}
Let us define the convex and lower semi-continuous function $\Psi$ on $\R^N\times \R$ by
\begin{displaymath}
  \Psi(p,m)= \left\{
    \begin{array}[c]{ll}
      m^{1-\alpha} |p|^\beta  \quad & \hbox{if}\quad m>0,\\
      0\quad & \hbox{if}\quad m=0 \hbox{ and } p=0,\\
      +\infty \quad &\hbox{otherwise}.
    \end{array}
\right.
\end{displaymath}
This implies that 
\begin{displaymath}
  \begin{split}
 (m\star \rho_\de)^{1-\alpha}   \, |  Du\star \rho_\de|^\beta\,  \mathds{1}_{\{m\star \rho_\de>0\} }
 &=  \Psi (Du\star \rho_\de, m\star \rho_\de)    \\
& \le \Psi (Du, m) \star \rho_\de \\
&=  \left( m^{1-\alpha}   \,  |Du|^\beta\,  \mathds{1}_{\{m>0\} }\right) \star \rho_\de
  \end{split}
\end{displaymath}
and the latter function is bounded in $L^1(Q_T)$ by assumption. We deduce from (\ref{eq:31}) the uniform bound (with respect to $\delta$)
\[  \parint  m_\de \left( \frac{|Du |^{\beta}}{m^{\alpha}} \mathds{1}_{\{m>0\} }   \right)\star \rho_\de \, dxdt \le C,\]
and the same holds replacing $m$ with $\tm$ and $u$ with $\tu$. Going back to (\ref{eq:35}), we obtain 
\begin{displaymath}
  \parint  m_\de \frac{|D\tu |^{\beta}}{\tm^{\alpha}} \mathds{1}_{\{\tm>0\} }\, dxdt +\parint \tmd\, \frac{|Du|^\beta}{m^\alpha}  \mathds{1}_{\{m>0\} } \, dxdt
   \leq C, \end{displaymath}
and we conclude as in the proof of Lemma~\ref{compare}.
\end{proof}

\section{Existence and uniqueness for non singular congestion}

Our first goal is to prove the existence of a weak solution and, to this purpose, we now show the strong convergence of $G^\epsilon(x,m^\epsilon(T))$ in $\elle1$. 

\label{sec:exist-uniq-non}
\begin{lemma}\label{sec:exist-uniq-non-2}
Consider a subsequence $(u^\epsilon, m^\epsilon)$ converging to $(u,m)$ as in  Proposition \ref{sec:non-sing-cong-5}.
 We have that $G^\epsilon(x,m^\epsilon(T))$ converges to $G(x,m(T))$ in $\elle1$ and $(u,m)$ satisfies the energy identity:
\begin{equation}
  \label{eq:22}
    \begin{split}
  \parint m \left( H_p(t,x, m,Du)\cdot Du -H(t,x, m,Du)\right) \,dxdt& \\ +\into m(T) G(x,m(T)) dx+\parint mF(t,x,m)\, dxdt&=    \langle u(0) ,m_0 \rangle.      
    \end{split}
\end{equation}
\end{lemma}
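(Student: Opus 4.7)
The plan is to obtain the energy identity \rife{eq:22} as the intersection of two matching bounds on $\langle u(0),m_0\rangle$: a lower bound coming from passing to the limit in the exact energy identity satisfied by the approximation $(u^\epsilon,m^\epsilon)$, and an upper bound coming from the crossed inequality of Lemma \ref{compare} applied to $(\tilde u,\tilde m)=(u,m)$ itself. Once the two bounds collapse to a single identity, the Young measure $\{\nu_x\}$ produced by Lemma \ref{sec:non-sing-cong-4} will be forced to concentrate, for a.e.\ $x$, on the level set of $G(x,\cdot)$ through $m(T,x)$, and this is precisely what will yield the strong $\elle1$ convergence of $G^\epsilon(x,m^\epsilon(T))$.

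First I would multiply \rife{eq:3u} by $m^\epsilon$, \rife{eq:3m} by $u^\epsilon$, integrate by parts and subtract, producing the exact identity
\[\into u^\epsilon(0)\,m_0^\epsilon\, dx = \into G^\epsilon(x,m^\epsilon(T))m^\epsilon(T)\, dx + \parint F^\epsilon m^\epsilon\, dxdt + \parint m^\epsilon\{H_p^\epsilon\cdot Du^\epsilon - H^\epsilon\}\,dxdt,\]
with the shorthand $H^\epsilon = H(t,x,T_{1/\epsilon}m^\epsilon,Du^\epsilon)$. Moving the symmetric mollifier across gives $\into G^\epsilon(x,m^\epsilon(T))m^\epsilon(T)\, dx = \into G(x,\rho^\epsilon\star m^\epsilon(T))\,\rho^\epsilon\star m^\epsilon(T)\, dx$, so by \rife{proyou} together with the equi-integrability provided by \rife{eq:11} the $G$-term converges to $\into\int_{\R} G(x,\lambda)\lambda\,d\nu_x(\lambda)\,dx$; the $F$-term converges by Vitali (equi-integrability plus the a.e.\ convergence of $m^\epsilon$ from Proposition \ref{sec:non-sing-cong-5}); and the $(H_p\cdot Du - H)$-term is pointwise nonnegative by \rife{eq:2}, so Fatou's lemma bounds its $\liminf$ below by $\parint m\{H_p\cdot Du - H\}\,dxdt$. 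Using $m_0^\epsilon\to m_0$ uniformly, $u^\epsilon(0)\rightharpoonup\chi$ weakly-$\star$ on $\Omega$, and $u(0)\ge\chi$ (Lemma \ref{sec:non-sing-cong-8}), I arrive at
\[\langle u(0),m_0\rangle \ge \into\int_{\R} G(x,\lambda)\lambda\,d\nu_x(\lambda)\,dx + \parint mF\,dxdt + \parint m\{H_p\cdot Du - H\}\,dxdt.\]

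For the matching upper bound, Lemma \ref{sec:non-sing-cong-4} shows that $u$ satisfies \rife{eq:19} with precisely this $\{\nu_x\}$, while Proposition \ref{sec:non-sing-cong-5} gives the Kolmogorov equation \rife{eq:16} for $m$ and the regularity (ii). Hence the hypotheses of Lemma \ref{compare} are met and testing against $(\tilde u,\tilde m)=(u,m)$ yields
\[\langle m_0,u(0)\rangle \le \into\int_{\R} G(x,\lambda)\,d\nu_x(\lambda)\,m(T)\,dx + \parint Fm\,dxdt + \parint m\{H_p\cdot Du - H\}\,dxdt.\]
Subtracting the two bounds, and using that $\int_{\R}\lambda\,d\nu_x(\lambda) = m(T,x)$ for a.e.\ $x$ (which follows from the weak convergence $\rho^\epsilon\star m^\epsilon(T)\rightharpoonup m(T)$ in $\elle1$), the difference rearranges to
\[\into\int_{\R}\bigl(G(x,\lambda)-G(x,m(T))\bigr)\bigl(\lambda-m(T)\bigr)\,d\nu_x(\lambda)\,dx \le 0.\]
By the monotonicity of $G$ in $m$ (assumption \rife{G1}), the integrand is pointwise nonnegative, so it must vanish $\nu_x$-a.e.\ and for a.e.\ $x\in\Omega$. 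This identifies $\int_{\R} G(x,\lambda)\,d\nu_x(\lambda) = G(x,m(T))$ and $\int_{\R} G(x,\lambda)\lambda\,d\nu_x(\lambda) = G(x,m(T))m(T)$, and substituting back in either bound gives exactly \rife{eq:22}.

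The strong $\elle1$ convergence of $G^\epsilon(x,m^\epsilon(T))$ is then a consequence of this support constraint: the pushforward of $\nu_x$ under $G(x,\cdot)$ is the Dirac mass $\delta_{G(x,m(T))}$, so by the Young measure characterization $G(x,\rho^\epsilon\star m^\epsilon(T))$ converges in measure to $G(x,m(T))$; equi-integrability from \rife{eq:11} combined with \rife{G2} and the convexity of $g(s)s$ promotes this to $\elle1$ convergence via Vitali, and the continuity of convolution in $\elle1$ transfers the convergence to $G^\epsilon(x,m^\epsilon(T))=\rho^\epsilon\star G(x,\rho^\epsilon\star m^\epsilon(T))$. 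The genuinely delicate point in the whole argument is the gap-closing step: the Fatou lower bound and the Lemma \ref{compare} upper bound feature two \emph{different} linear functionals of $\nu_x$, and it is only the identification of $m(T,x)$ as the barycenter of $\nu_x$, combined with the monotonicity of $G$, that forces them to coincide and simultaneously pins down $\nu_x$ modulo the equivalence relation induced by $G(x,\cdot)$.
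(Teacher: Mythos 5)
Your overall architecture coincides with the paper's: the exact energy identity for $(u^\epsilon,m^\epsilon)$, Fatou's lemma on the nonnegative terms $m^\epsilon(H_p\cdot Du^\epsilon-H)$ and $F^\epsilon m^\epsilon$, Lemma \ref{sec:non-sing-cong-8} for $\lim\into u^\epsilon(0)m_0^\epsilon=\langle\chi,m_0\rangle\le\langle u(0),m_0\rangle$, and Lemma \ref{compare} applied with $(\tilde u,\tilde m)=(u,m)$, with the monotonicity of $G$ closing the gap between the two resulting bounds. Where you diverge is in how that gap is closed. The paper keeps the $G$-term as $\limsup_\epsilon\into G^\epsilon(x,m^\epsilon(T))m^\epsilon(T)\,dx$, bounds it above by $\into\int_{\R}m(T)G(x,\lambda)\,d\nu_x(\lambda)\,dx$, and then runs a truncated monotonicity argument on $\into[T_k(G(x,\hat m^\epsilon))-T_k(G(x,m(T)))][\hat m^\epsilon-m(T)]\,dx$ (with $\hat m^\epsilon=\rho^\epsilon\star m^\epsilon(T)$) to extract a.e.\ convergence of $G(x,\hat m^\epsilon)$, hence $\int_{\R}G(x,\lambda)\,d\nu_x(\lambda)=G(x,m(T))$ and then the identity by Fatou. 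You instead work entirely at the level of the limit Young measure: the barycenter identity $\int_{\R}\lambda\,d\nu_x(\lambda)=m(T,x)$ lets you rewrite the defect between the two bounds as $\into\int_{\R}(G(x,\lambda)-G(x,m(T)))(\lambda-m(T))\,d\nu_x(\lambda)\,dx\le0$, and pointwise nonnegativity forces $G(x,\cdot)\equiv G(x,m(T,x))$ on ${\rm supp}\,\nu_x$. This is a legitimate and arguably cleaner variant; the deduction of convergence in measure of $G(x,\hat m^\epsilon)$ from the Dirac pushforward is standard Young-measure theory, and the equi-integrability of $G(x,\hat m^\epsilon)$ (already established in Lemma \ref{sec:non-sing-cong-4}) upgrades it to $\elle1$ convergence as you say.

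One step needs patching: the claim that the $G$-term of the energy identity \emph{converges} to $\into\int_{\R}G(x,\lambda)\lambda\,d\nu_x(\lambda)\,dx$ ``by \rife{proyou} together with the equi-integrability provided by \rife{eq:11}''. Estimate \rife{eq:11} gives only a uniform $L^1$ bound on $G(x,\hat m^\epsilon)\hat m^\epsilon$, not its equi-integrability, so \rife{proyou} cannot be applied to $f(x,s)=G(x,s)s$. Fortunately your argument only requires the one-sided inequality
\begin{displaymath}
\liminf_{\epsilon\to0}\into G(x,\hat m^\epsilon)\hat m^\epsilon\,dx\;\ge\;\into\int_{\R}G(x,\lambda)\lambda\,d\nu_x(\lambda)\,dx,
\end{displaymath}
and this does hold: apply \rife{proyou} to $T_k(G(x,\cdot))\lambda$ (equi-integrable because $T_k(G)$ is bounded and $\hat m^\epsilon$ is equi-integrable), use $T_k(G)\hat m^\epsilon\le G\,\hat m^\epsilon$, and let $k\to\infty$ by monotone convergence --- exactly the computation the paper performs at the end of Lemma \ref{sec:non-sing-cong-4} to show $\int_{\R}G(x,\lambda)\lambda\,d\nu_x(\lambda)\in\elle1$. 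With that correction the two bounds collapse as you describe and \rife{eq:22} follows.
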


\begin{proof}
We start from the energy identity for the solution of (\ref{eq:3u})-(\ref{eq:3bc}):
\be\label{eneps}
\begin{split}& 
 \parint m^\epsilon \left( H_p(t,x, T_{1/\epsilon}m^\epsilon,Du^\epsilon)\cdot Du^\epsilon -H(t,x, T_{1/\epsilon}m^\epsilon,Du^\epsilon)\right) \,dxdt
\\
&  +\into m^\epsilon(T) G^\epsilon(x,m^\epsilon(T)) dx+\parint m^\epsilon F^\epsilon(t,x,m^\epsilon)\, dxdt
\,\, = \into u^\epsilon(0)m_0^\epsilon{}\, dx .
\end{split}
\ee
Thanks to (\ref{eq:2}) and (\ref{F0}), Fatou's lemma implies that 
\begin{displaymath}
  \begin{split}
 & \liminf_{\epsilon\to 0} \parint m^\epsilon \left( H_p(t,x, T_{1/\epsilon}m^\epsilon,Du^\epsilon)\cdot Du^\epsilon -H(t,x, T_{1/\epsilon}m^\epsilon,Du^\epsilon)\right) \,dxdt \\  \ge&  \parint m \left( H_p(t,x, m,Du)\cdot Du -H(t,x, m,Du)\right) \,dxdt,
  \end{split}
\end{displaymath}
and
$$
\liminf_{\epsilon\to 0} \parint m^\epsilon F^\epsilon(t,x,m^\epsilon)\, dxdt \ge  \parint mF(t,x,m)\, dxdt.
$$
{From} Lemma \ref{sec:non-sing-cong-8}  we can assume that  $u^\epsilon|_{t=0}$ converges weakly * to a bounded measure $\chi$  on $\Omega$, and $\chi\leq u(0)$. Since $m_0\in C(\Omega)$, we deduce that
 $$ 
 \lim_{\epsilon\to 0} \into u^\epsilon(0)m_0^\epsilon\, dx=
\langle \chi ,m_0 \rangle \leq \langle u(0) ,m_0 \rangle\,.
$$
 Combining  the informations above, we get from \rife{eneps}
\begin{align*} 
 \limsup\limits_{\epsilon \to 0} & \into m^\epsilon(T) G^\epsilon(x,m^\epsilon(T)) dx \leq  \langle u(0) ,m_0 \rangle -\parint mF(t,x,m)\, dxdt
 \\
 & - \parint m \left( H_p(t,x, m,Du)\cdot Du -H(t,x, m,Du)\right) \,dxdt
\end{align*}
We now use~(\ref{ineq}) in Lemma~\ref{compare} with $\tilde m=m$ and $\tilde u=m$, and we get
\be\label{geps}
\begin{split} 
 \limsup\limits_{\epsilon \to 0} & \into m^\epsilon(T) G^\epsilon(x,m^\epsilon(T)) dx \leq  \langle u(0) ,m_0 \rangle -\parint mF(t,x,m)\, dxdt
 \\
 & - \parint m \left( H_p(t,x, m,Du)\cdot Du -H(t,x, m,Du)\right) \,dxdt
 \\
 & \qquad \leq  \into \int_{\R} m(T) G(x,\lambda)d\nu_x(\lambda) dx\,.
\end{split}
\ee
We now use the monotonicity of $G$ in order to get the strong convergence of $G^\epsilon(x,m^\epsilon(T))$. Indeed, if we set $\hat m^\epsilon:= \rho^\epsilon \star m^\epsilon(T)$ and $T_k(r)= \min(r,k)$, 
\begin{align*}
 \into & \left[ T_k(G(x,\hat m^\epsilon))- T_k(G(x,m(T)))\right][\hat m^\epsilon-m(T)] dx
 \\
&  \leq   \into  G(x,\hat m^\epsilon) \hat m^\epsilon \, dx -  \into  T_k(G(x,\hat m^\epsilon)) m(T)\, dx  - \into T_k(G(x,m(T))) \, [\hat m^\epsilon-m(T)] dx\,.
\end{align*}   
Since $\hat m^\epsilon$ weakly converges to $m(T)$, the last term vanishes as $\epsilon \to 0$.
The first one is estimated by \rife{geps}, while for the second we can use \rife{proyou} with $f(x,\cdot)= T_k(G(x,\cdot))$.  Therefore, we get
\begin{align*}
 \limsup_{\epsilon \to 0}\into & \left[ T_k(G(x,\hat m^\epsilon))- T_k(G(x,m(T)))\right][\hat m^\epsilon-m(T)] dx
 \\
&  \leq   \into \int_{\R} G(x,\lambda) m(T) d\nu_x(\lambda) \, dx -  \into  \int_{\R} T_k(G(x,\lambda)) m(T) d\nu_x(\lambda) \, dx\,.
\end{align*}   
Letting $k\to \infty$ we  conclude that 
\be\label{kG}
\limsup_{k \to \infty }\,\, \limsup_{\epsilon \to 0}\into  \left[ T_k(G(x,\hat m^\epsilon))- T_k(G(x,m(T)))\right][\hat m^\epsilon-m(T)] dx =0\,.
\ee
We claim now that, as a  consequence of \rife{kG}, $G(x,\hat m^\epsilon)$ converges to $G(x,m(T))$ almost everywhere in $\Omega$, at least for a subsequence. Indeed, we observe that 
$$
\begin{array}{c}
\into \frac{ [G(x,\hat \me )-G (x,  m(T))] \, [ \hat m^\vep  - m(T)]}{1+\hat m^\vep + m(T)} dx\leq 
\into  \left[ T_k(G(x,\hat m^\epsilon))- T_k(G(x,m(T)))\right][\hat m^\epsilon-m(T)] dx
\\
\m
  + \into |G (x,  \hat m^\epsilon )| \, \mathds{1}_{\{G (x,  \hat m^\epsilon )>k\} }\, dx
+ \into |G (x,  m(T))| \, \mathds{1}_{\{G (x,  m(T))>k\} }\, dx
\end{array}
$$
and last two terms tend to zero as $k\to \infty$ uniformly with respect to $\vep$.  So 
we have
$$
\begin{array}{c}
\limsup\limits_{\epsilon \to 0}\into \frac{ [G(x,\hat \me )-G (x,  m(T))] \, [ \hat m^\vep  - m(T)]}{1+\hat m^\vep + m(T)}  dx 
\\
\m \quad \leq 
\limsup\limits_{\epsilon \to 0}\into  \left[ T_k(G(x,\hat m^\epsilon))- T_k(G(x,m(T)))\right][\hat m^\epsilon-m(T)] dx
\\
\m
+ \sup_{\vep} \,\,  \into |G (x,  \hat m^\epsilon  )| \, \mathds{1}_{\{G (x,  \hat m^\epsilon )>k\} }\,  dx + \into |G (x,  m(T))| \, \mathds{1}_{\{G (x,  m(T))>k\} }\, dx
\end{array}
$$
hence, using \rife{kG} and  letting $k\to \infty$, the right-hand side vanishes. We deduce that 
$$
\limsup_{\epsilon \to 0}\into \frac{  [G(x,\hat \me )-G (x,  m(T))] \, [ \hat m^\vep  - m(T)]}{1+\hat m^\vep + m(T)} dx= 0
$$
which means, since $G$ is monotone,  the $L^1$ convergence of the integrand function. We deduce that, up to subsequences, 
$$
\frac{ [G(x,\hat \me )-G (x,  m(T))] \, [ \hat m^\vep  - m(T)] }{1+\hat m^\vep + m(T)}  \to 0 \quad \hbox{a.e.}
$$
and this readily yields the a.e. convergence of $G(x,\hat m^\epsilon)$  to $G(x,m(T))$  in $\Omega$. 

Finally, since $G(x,\hat m^\epsilon)$ is also equi-integrable by \rife{eq:11}, it is therefore convergent in $\elle1$. As a consequence, the $L^1$ convergence of $G^\vep(x, m^\epsilon(T))$ towards $G(x,m(T))$ is established. In addition, we now deduce that 
$$
\int_{\R}G(x,\lambda)d\nu_x(\lambda)= G(x,m(T))\,.
$$
We insert this information in the right-hand side of \rife{geps}, moreover  now we can use Fatou's lemma in the left-hand side and we conclude with the identity
\begin{align*} 
 \into G(x,m(T))m(T) dx & =  \langle u(0) ,m_0 \rangle -\parint mF(t,x,m)\, dxdt
 \\
 & - \parint m \left( H_p(t,x, m,Du)\cdot Du -H(t,x, m,Du)\right) \,dxdt
\end{align*}
which is \rife{eq:22}.
\end{proof}

\begin{remark}
The monotonicity of $G$ is only used in the above lemma in order to obtain the strong $L^1$ convergence of $G^\vep(x, m^\epsilon(T))$.  If $q<2$, this condition would not be required since one already knows  from Proposition \ref{sec:non-sing-cong-5} that $m^\epsilon(T)$ strongly converges in $L^1$ to $m(T)$.
\end{remark}

We can finally conclude with the existence result.

  \begin{Theorem}\label{sec:exist-uniq-non-1}
    Consider a subsequence $(u^\epsilon, m^\epsilon)$ converging to $(u,m)$ as in  Proposition \ref{sec:non-sing-cong-5}.
 Then $(u,m)$ is a weak solution of (\ref{MFGu}).
  \end{Theorem}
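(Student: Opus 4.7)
The plan is to verify all four items of Definition~\ref{weaksol} for the limit pair $(u,m)$. Items (i) and (iv) are immediate from Proposition~\ref{sec:non-sing-cong-5}, and (ii) is obtained by gathering \rife{eq:18}--\rife{eq:20}, the strong $\elle1$ convergence of $G^\epsilon(x,m^\epsilon(T))$ proved in Lemma~\ref{sec:exist-uniq-non-2} (together with $\into G^\epsilon(x,m^\epsilon(T))m^\epsilon(T)\leq C$), and the $\parelle1$ bounds on $H(t,x,m,Du)$ and $m\,H_p(t,x,m,Du)$ supplied by \rife{grow}--\rife{conv}, Lemma~\ref{sec:non-sing-cong-2} and the a.e.\ convergence of $(u^\epsilon,Du^\epsilon,m^\epsilon)$. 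The main work is item (iii): the Bellman equation holds as an equality.

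From Lemma~\ref{sec:non-sing-cong-4}, combined with the identification $\int_\R G(x,\lambda)\,d\nu_x(\lambda)=G(x,m(T))$ established in Lemma~\ref{sec:exist-uniq-non-2}, we already know that $u$ is a subsolution of the Bellman equation. Equivalently, using Lemma~\ref{sec:case-non-singular} and Remark~\ref{sec:case-non-singular-1}, there exist a nonnegative distribution $R$ on $(0,T)\times\Omega$ and a nonnegative Radon measure $S$ on $\Omega$ with
\begin{equation*}
R=F(t,x,m)+\partial_t u+\nu\Delta u-H(t,x,m,Du)\geq 0,\qquad S=G(x,m(T))-u(T)\geq 0,
\end{equation*}
and the goal becomes to show $R\equiv 0$ and $S\equiv 0$.

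To this end, I apply Lemma~\ref{compare} with $(\tilde u,\tilde m)=(u,m)$ to obtain the crossed energy inequality
\begin{equation*}
\langle u(0),m_0\rangle\leq\into G(x,m(T))m(T)\,dx+\parint F\,m\,dxdt+\parint m\,[H_p(t,x,m,Du)\cdot Du-H(t,x,m,Du)]\,dxdt,
\end{equation*}
and compare with the energy identity~\rife{eq:22} of Lemma~\ref{sec:exist-uniq-non-2}, which asserts that the same inequality is in fact an equality. Retracing the integration-by-parts performed in the proof of Lemma~\ref{compare} (now with every inequality becoming an equality in the limit), the gap between the two sides is exactly $\parint R\,m\,dxdt+\into S\,m(T)\,dx$, so saturation forces $R\,m\equiv 0$ on $Q_T$ and $S\,m(T)\equiv 0$ on $\Omega$.

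The final step, and the main obstacle, is to promote these two orthogonality relations into $R\equiv 0$ and $S\equiv 0$, which reduces to strict positivity of $m$ in $(0,T]\times\Omega$. The classical parabolic strong maximum principle gives $m^\epsilon>0$ at the approximate level, and this positivity is propagated to the limit via a Harnack-type argument applied to the Kolmogorov equation, whose drift $H_p$ is controlled through the decomposition of Lemma~\ref{sec:non-sing-cong-2}. Once $m(t,\cdot)>0$ for every $t\in(0,T]$ is available, we conclude $R=0$ as a distribution on $(0,T)\times\Omega$ and $S=0$ as a measure on $\Omega$, yielding both \rife{eq:17} and the terminal condition $u(T)=G(x,m(T))$. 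The delicate point concentrated in this last step is precisely that $m$ is only a renormalized solution of a Fokker-Planck equation with $L^1$-type drift, so the positivity argument must be tailored to this setting.
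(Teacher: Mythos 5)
Your reduction to showing $R\equiv 0$ and $S\equiv 0$ is a reasonable strategy, and the first half (deriving $\parint Rm\,dxdt+\into S\,m(T)\,dx=0$ from Lemma \ref{compare} with $(\tilde u,\tilde m)=(u,m)$ together with the energy identity \rife{eq:22}) is formally consistent, though already delicate: $R$ is a priori only a nonnegative measure on $(0,T)\times\Omega$ while $m$ is merely a Lebesgue function, so the pairing $\parint R\,m$ is not well defined without further work, and the Fatou-type passages in the proof of Lemma \ref{compare} produce one-sided inequalities rather than an identity with an explicit defect term.

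The genuine gap is your last step. Strict positivity of $m$ on $(0,T]\times\Omega$ is not available here and is not established by what you invoke. The drift of the Kolmogorov equation is controlled only through the decomposition of Lemma \ref{sec:non-sing-cong-2}, essentially $m|H_p|^2\in\parelle1$ plus an $L^{\beta'}$ piece; this is far weaker than the integrability needed to run a parabolic Harnack inequality, and the pointwise lower bounds on $m^\epsilon$ coming from the strong maximum principle degenerate as $\epsilon\to 0$ because they depend on drift norms that are not uniformly controlled. The paper never claims $m>0$; in the singular case it must carry the indicator $\mathds{1}_{\{m>0\}}$ throughout precisely for this reason. Even granting $m>0$ a.e., deducing $R=0$ from $\langle R,m\rangle=0$ for a measure $R$ against a merely measurable $m$ is not licit. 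The paper's proof avoids all of this: it upgrades the subsolution inequality of Lemma \ref{sec:non-sing-cong-4} to an equality by proving the strong $L^1(Q_T)$ convergence of the Hamiltonian term $H(t,x,T_{1/\epsilon}m^\epsilon,Du^\epsilon)$ itself, via an adaptation of the strong convergence of truncations result of \cite[Theorem 3.1]{P}, exploiting that (up to an $L^1$-convergent correction) the Hamiltonian is nonnegative with natural growth and that $u^\epsilon$ is bounded below; one then passes to the limit directly in \rife{eq:3u} to get \rife{eq:17}, the terminal datum being identified by Lemma \ref{sec:exist-uniq-non-2}. You would need to replace your positivity argument by something of that kind.
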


  \begin{proof}
With the results proved in \S~\ref{sec:non-sing-cong-6}, there only remains to 
pass to the limit in the Hamilton-Jacobi equation and show that (\ref{MFGu}) holds.\\
By Lemma \ref{sec:exist-uniq-non-2},  $G^\epsilon(x,m^\epsilon(T))\to  G(x,m(T))$ in $L^1(\Omega)$. We also know that $ F^\epsilon(t,x,m^\epsilon) \to F(t,x,m)$ in $L^1(Q_T)$ (see the proof of Lemma~\ref{sec:non-sing-cong-4}). Now we observe that assumptions \rife{eq:1}-\rife{grow} imply that  $H$ satisfies
$$
c_0\,  \frac{|p|^\beta}{(m+\mu)^\alpha}- c_1 \,(1+m^{\frac\alpha{\beta-1}}) \leq H(t,x,m,p) \leq c  \left( |p| + \frac{|p|^\beta}{(m+\mu)^\alpha}\right) \leq c  \left(\frac{|p|^\beta}{(m+\mu)^\alpha}+  1+m^{\frac\alpha{\beta-1}}\right)\,.
$$
Therefore, up to addition of  a $L^1$-convergent sequence, the Hamiltonian 
$$
(t,x,p)\mapsto H(t,x,T_{1/\epsilon}m^\epsilon(t,x), p)
$$
is nonnegative and has  natural growth. 
Since $u^\epsilon$ is bounded below, without loss of generality we can assume that the solution $u^\epsilon$ 
is also nonnegative. It is therefore possible to apply (a  straightforward adaptation of) the result in  \cite[Theorem 3.1]{P}, in order to deduce that   $H(t,x, T_{1/\epsilon}m^\epsilon,Du^\epsilon)  \to H(t,x, m,Du)$ in $L^1 (Q_T)$, 
and  we can pass to the limit in (\ref{eq:3u})-(\ref{eq:3bc}) and obtain (\ref{eq:17}).
 \end{proof}

In order to prove the uniqueness of weak solutions, the main step was given by Lemma \ref{compare}. We need however a counterpart which ensures that {\it any} weak solution satisfies the energy equality \rife{eq:22}. To this purpose, we follow an argument developed  in \cite{Pumi}.

\begin{lemma}\label{enidweak} Let $(u,m)$ be any weak solution of (\ref{MFGu})-(\ref{MFGbc}). Then the energy identity \rife{eq:22} holds true.
\end{lemma}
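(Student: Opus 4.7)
The plan is to establish \rife{eq:22} as an identity by a direct duality argument: spatially regularize $u$, use the resulting smooth function as a test function in the Kolmogorov equation \rife{eq:16}, substitute in the convolved Bellman equation \rife{eq:17}, and pass to the limit. This approach bypasses the need to invoke Lemma \ref{compare} itself, which was designed for the more general situation of subsolutions and Young measures and only yields one-sided inequalities.

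Let $\rho_\delta$ be a symmetric mollifier on $\R^N$ and set $u_\delta(t,\cdot) := u(t,\cdot) \star \rho_\delta$. By Remark \ref{ureno}, $u \in C([0,T]; \elle1)$, hence $u_\delta \in C([0,T]; C^\infty(\Omega))$, and by spatial convolution of \rife{eq:17} with $\rho_\delta$,
\begin{equation*}
-\partial_t u_\delta - \nu \Delta u_\delta + H(t,x,m,Du) \star \rho_\delta = F(t,x,m) \star \rho_\delta,
\end{equation*}
with terminal datum $u_\delta(T,\cdot) = G(\cdot,m(T)) \star \rho_\delta$. I would use $\zeta_\eta(t) u_\delta$ (with $\zeta_\eta(t) = \min(1, (T-t)_+/\eta)$, further regularizing in time if needed) as a test function in \rife{eq:16}. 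After integration by parts and letting $\eta \to 0$ --- relying on $m \in C([0,T]; \elle1)$ and the spatial smoothness of $u_\delta$ --- this yields the duality relation
\begin{equation*}
\parint m \left\{-\partial_t u_\delta - \nu \Delta u_\delta + H_p(t,x,m,Du) \cdot Du_\delta\right\} \, dxdt = \into m_0\, u_\delta(0) \, dx - \into m(T)\, u_\delta(T) \, dx.
\end{equation*}
Substituting the convolved Bellman equation into the left-hand side gives
\begin{equation*}
\parint m \left\{[F\star \rho_\delta - H\star \rho_\delta] + H_p(t,x,m,Du) \cdot Du_\delta\right\} \, dxdt = \into m_0\, u_\delta(0) \, dx - \into m(T)\, u_\delta(T) \, dx.
\end{equation*}

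The final step is to pass to the limit as $\delta \to 0$. For the boundary contributions: continuity of $m_0$ yields $\into m_0 u_\delta(0)\, dx \to \langle m_0, u(0)\rangle$ (with $u(0)$ in the sense of Remark~\ref{sec:case-non-singular-1}), while condition (ii) of Definition \ref{weaksol} together with $G(x,m(T))m(T)\in \elle1$ force $\into m(T) u_\delta(T) \, dx \to \into m(T) G(x,m(T))\, dx$. For the interior convolutions $\parint m (F\star \rho_\delta)\, dxdt$ and $\parint m(H\star\rho_\delta)\, dxdt$, I would transfer the mollifier onto $m$ by Fubini and apply Vitali's theorem, using the higher integrability of $m$ in $L^{(1+\alpha/(\beta-1))(N+2)/N}$ from \rife{eq:20}, combined with the growth bounds \rife{coerc} and \rife{F1}, to secure the required equi-integrability.

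The main obstacle is the convergence
\begin{equation*}
\parint m\, H_p(t,x,m,Du) \cdot Du_\delta\, dxdt \;\longrightarrow\; \parint m\, H_p(t,x,m,Du) \cdot Du\, dxdt.
\end{equation*}
Although $Du_\delta \to Du$ almost everywhere by Proposition \ref{sec:non-sing-cong-5}, equi-integrability of this product has to be extracted from the growth bound \rife{grow} together with the moment bounds $m|Du|^\beta/(m+\mu)^\alpha \in \parelle1$ and $m \in \parelle{1+\alpha/(\beta-1)}$ from (ii). I would essentially replay the weighted Young-inequality manipulations from the proof of Lemma \ref{compare} --- treating the cases $\alpha \leq 1$ (cf.\ \rife{stimainf}) and $\alpha > 1$ (cf.\ \rife{stide0}--\rife{stide1}) separately --- to dominate $|m H_p \cdot Du_\delta|$ by an $L^1$-convergent majorant. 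Once this convergence is justified, the resulting identity in the limit is precisely \rife{eq:22}.
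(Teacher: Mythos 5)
Your plan is a one-pass duality argument: mollify $u$ in space, test the Kolmogorov equation with $u_\delta$, and let $\delta\to 0$ to obtain \rife{eq:22} as an identity. The step that fails is precisely the one you flag as the ``main obstacle'', and your proposed fix does not repair it. The manipulations in Lemma \ref{compare} (case $\alpha\le 1$, see \rife{stimainf} and the lines following it) do \emph{not} produce an $L^1$-convergent majorant of the crossed term $|m\,H_p\cdot Du_\delta|$ (equivalently of $m_\delta\,|Du|^\beta/(m+\mu)^\alpha$ after Fubini): they only produce a \emph{lower} bound for the combination $H(t,x,m,Du)\,\tmd-Du\cdot\tw_\delta$ by an $L^1$-convergent sequence, because the dangerous crossed term $\tmd\,|Du|^\beta/(m+\mu)^\alpha$ is \emph{absorbed} into the coercive part $c_0\,\tmd\,|Du|^\beta/(m+\mu)^\alpha$ of $H\tmd$ rather than bounded. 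This is why the paper can only apply Fatou's lemma in \rife{liminfde} and obtains a one-sided inequality; there is no uniform-in-$\delta$ $L^1$ bound, let alone equi-integrability, for $m(x)$ paired with $H$ or $Du$ evaluated at nearby points $y$. Consequently your $\delta\to 0$ limit yields only the inequality $\langle m_0,u(0)\rangle\le\dots$, which is exactly the content of Lemma \ref{compare} with $\tilde m=m$, $\tilde u=u$, and not the identity. (The convergence $\into m(T)\,u_\delta(T)\to\into m(T)G(x,m(T))$ has the same crossed-term problem and needs the monotonicity \rife{G1} and convexity of $g(s)s$, not just $G(x,m(T))m(T)\in L^1$; and admitting $u_\delta$, which is only $W^{1,1}$ in time, as a test function in \rife{eq:16} requires an extra time-regularization.)

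What is missing is the entire mechanism for the \emph{reverse} inequality. The paper obtains it by a different route: set $u_k=\min(u,k)$, which by Kato's inequality is a supersolution of a truncated Bellman inequality; invoke \cite[Theorem 3.6]{Parma} to assert that $m$ is a \emph{renormalized} solution of the Fokker--Planck equation, so that $S_n(m)$ satisfies $\partial_t S_n(m)-\nu\Delta S_n(m)-\dive(S_n'(m)mH_p)=\omega_n$ with $\omega_n\to0$ in $L^1$. Both $u_k$ and $S_n(m)$ are bounded and lie in $L^2(0,T;H^1)\cap C([0,T];L^1)$, so they may legitimately be paired against each other with no mollification and no crossed terms; letting $n\to\infty$ and then $k\to\infty$ (monotone/Fatou arguments now go in the favorable direction) gives $\langle m_0,u(0)\rangle\ge\dots$. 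Without this truncation--renormalization step, or an equivalent substitute, your argument proves at most half of \rife{eq:22}.
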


\begin{proof} 
By Lemma \ref{compare}, we already know that 
\begin{align*}
\into  m_0\,  u(0)\, dx & \leq  \into G(x,m(T))\,  m(T)\,dx + \parint F(t,x,m) m\, dxdt
\\ & + \parint \left[ m\, H_p(t,x,m, D u)\cdot Du- m \,H(t,x,m,Du)\right]dxdt
\end{align*}
where we used the fact that $u\in \continue1$.  In order to prove the reverse inequality, let $u_k:= \min(u,k)$. Since $-u_t -\nu \Delta u \in \parelle1$, by Kato's inequality,
$$
-(u_k)_t -\nu\Delta u_k + H(t,x,m,Du)\mathds{1}_{\{u<k\}}\geq F(t,x,m) \mathds{1}_{\{u<k\}}\,.
$$
Since $m_t-\nu \Delta m -\dive(m\,b)=0$ for some $b$ such that $m|b|^2\in \parelle1$, by \cite[Theorem 3.6]{Parma} we know that $m$ is also a renormalized solution, hence  it satisfies 
$$
S_n(m)_t-\nu \Delta S_n(m) -\dive(S_n'(m)m\,H_p(t,x,m,Du))=\omega_n
$$  
where $S_n(m)$ is a suitable $C^1$ truncation (i.e. $S_n(r)= nS(r/n)$, for some $S$ compactly supported in $[-2,2]$ such that $S=1$ in $[-1,1]$) and where  $\omega_n\to 0$ in $\parelle1$. Notice that both $u_k$ and $S_n(m)$ are bounded functions and belong to $L^2(0,T; H^1)\cap \continue1$. By using $u_k$ in the equation of $S_n(m)$ we get
\begin{align*}
& \into S_n(m_0) u_k(0)\, dx - \into  u_k(T)\, S_n(m(T))\, dx \geq 
\parint F(t,x,m) \mathds{1}_{\{u<k\}}S_n(m)\, dxdt 
\\
& \, + \parint \left[ S_n'(m)m \, H_p(t,x,m,Du)\cdot Du - S_n(m) H(t,x,m,Du)\right] \mathds{1}_{\{u<k\}} \, dxdt - \parint \omega_n\, u_k\, dxdt\,.
\end{align*}
Letting first $n\to \infty$, the last term vanishes since $u_k$ is bounded. The regularity of weak solutions allows us to pass to the limit in the other terms (recall that $u(T)=G(x,m(T))$) and we obtain
 \begin{align*}
& \into m_0\, u_k(0)\, dx - \into  u_k(T)\, m(T)\, dx \geq 
\parint F(t,x,m) \mathds{1}_{\{u<k\}}m\, dxdt 
\\
& \, + \parint \left[ m \, H_p(t,x,m,Du)\cdot Du - m H(t,x,m,Du)\right] \mathds{1}_{\{u<k\}} \, dxdt\,.
\end{align*}  
Finally, letting $k\to \infty$ we deduce the desired inequality and we conclude that \rife{eq:22} holds true.
\end{proof}

  \begin{lemma}
    \label{sec:exist-uniq-non-3}
Under all the assumptions made in Theorem \ref{sec:case-non-singular-5}, there is a unique weak solution of (\ref{MFGu})-(\ref{MFGbc}).
  \end{lemma}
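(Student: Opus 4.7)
My plan is to compare two weak solutions $(u,m)$ and $(\tu,\tm)$ by applying the crossed energy inequality of Lemma \ref{compare} twice and combining the outcome with the energy identities from Lemma \ref{enidweak}. Since every weak solution satisfies \rife{eq:17} as an equality, it is in particular a subsolution in the sense of \rife{eq:19} with the Dirac family $\nu_x=\delta_{m(T,x)}$, and condition (ii) of Definition \ref{weaksol} guarantees $G(x,m(T))m(T)\in \elle1$, which is exactly \rife{gll} for this choice. I would therefore invoke Lemma \ref{compare} first with $(u,m)$ as the reference solution and $(\tu,\tm)$ as the test couple, then again with the two couples swapped, obtaining two inequalities whose left-hand sides are $\into m_0\,u(0)\,dx$ and $\into m_0\,\tu(0)\,dx$ respectively. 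Lemma \ref{enidweak} provides the matching equalities for $(u,m)$ and $(\tu,\tm)$ separately.

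The key step is to add the two crossed inequalities and subtract the two energy identities. Thanks to the common initial datum $m_0$, the four boundary terms $\into m_0\,u(0)\,dx$ and $\into m_0\,\tu(0)\,dx$ cancel, so the left-hand side of the resulting chain is exactly zero. On the right-hand side, the $G$-contributions collapse into $-\into[G(x,m(T))-G(x,\tm(T))][m(T)-\tm(T)]\,dx$, which is nonpositive by monotonicity \rife{G1}. The $F$- and $H$-contributions, after expansion, should recombine into $-\parint[h(1)-h(0)]\,dxdt$, where $h$ is the function from assumption \rife{eq:44} built pointwise with $m_s=\tm+s(m-\tm)$ and $p_s=D\tu+s(Du-D\tu)$. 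I expect this algebraic identification to be the main obstacle: matching a symmetric combination of eight $H$-terms and four $F$-terms with the definition of $h$ is elementary but requires painstaking bookkeeping, and one has to confirm that all the terms produced in this recombination are already in $\parelle1$ thanks to the regularity listed in (ii) of Definition \ref{weaksol}.

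Once this identification is in place, the conclusion is straightforward. The inequality $0\le -\into[G(x,m(T))-G(x,\tm(T))][m(T)-\tm(T)]\,dx-\parint[h(1)-h(0)]\,dxdt$, combined with $h(1)\ge h(0)$ pointwise from \rife{eq:44}, forces both terms to vanish almost everywhere. The strict monotonicity clause in \rife{eq:44} then imposes $(z,r)=(0,0)$ a.e., i.e.\ $m=\tm$ and $Du=D\tu$ a.e.\ on $Q_T$. Finally, with $m=\tm$ fixed, both $u$ and $\tu$ solve the same backward linear Cauchy problem with source $F(t,x,m)-H(t,x,m,Du)\in \parelle1$ and terminal datum $G(x,m(T))\in \elle1$; uniqueness for the heat equation with $L^1$ data (in the renormalized sense recalled in Remark \ref{ureno}) then yields $u=\tu$, which is the desired uniqueness.
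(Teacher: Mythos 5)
Your proposal is correct and follows essentially the same route as the paper: both crossed inequalities from Lemma \ref{compare} (with $\nu_x=\delta_{m(T,x)}$, as licensed by the remark after that lemma) are added and the two energy identities of Lemma \ref{enidweak} subtracted, the residual $F$- and $H$-terms are recognized as $\parint [h(1)-h(0)]$ with $h$ from \rife{eq:44} (the paper packages this as the quantity $E(t,x,m,Du,\tm,D\tu)\ge 0$, with equality iff $(z,r)=(0,0)$), and monotonicity of $G$ plus strictness of \rife{eq:44} give $m=\tm$, $Du=D\tu$, after which $u$ and $\tu$ solve the same linear backward problem and coincide. No gaps; the bookkeeping you flag is exactly the computation the paper carries out when verifying $E=h(1)-h(0)$.
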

  \begin{proof}
Let $(u,m)$ and $(\tilde u, \tilde m)$ be two weak solutions of (\ref{MFGu})-(\ref{MFGbc}).
By Lemma \ref{enidweak},  they both satisfy the energy identity (\ref{eq:22}), so using also Lemma \ref{compare}  overall we know that 
    \begin{displaymath}
      \begin{split}
        \into  m_0\, u(0)\, dx & \leq  \into G(x,m(T))\, \tilde m(T)\,dx + \parint F(t,x,m)\tilde m\, dxdt
\\ & + \parint \left[ \tilde m\, H_p(t,x,\tilde m, D\tilde u)\cdot Du- \tilde m \,H(t,x,m,Du)\right]dxdt,
\\
        \into \ m_0\, \tilde u(0)\, dx & \leq  \into G(x,\tilde m(T))\,  m(T)\,dx + \parint F(t,x,\tilde m) m\, dxdt
\\ & + \parint \left[  m\, H_p(t,x, m, D u)\cdot D\tilde u-  m \,H(t,x,\tilde m,D\tilde u)\right]dxdt,
\\
  \into  m_0\, u(0)\, dx & =  \into G(x,m(T))\, m(T)\,dx + \parint F(t,x,m) m\, dxdt
\\ & + \parint \left[  m\, H_p(t,x, m, Du)\cdot Du- m \,H(t,x,m,Du)\right]dxdt,
\\
  \into  m_0\, \tilde u(0)\, dx & =  \into G(x,\tilde m(T))\, \tilde m(T)\,dx + \parint F(t,x,\tilde m)\tilde m\, dxdt
\\ & + \parint \left[ \tilde m\, H_p(t,x,\tilde m, D\tilde u)\cdot D\tilde u- m \,H(t,x,\tilde m,D\tilde u)\right]dxdt,
 \end{split}
    \end{displaymath}
and therefore
\begin{equation}\label{eq:42}
\begin{split}
 0 \ge &\ds \int_{\Omega} (  G(m(T))-G(\tilde m(T)))\,   (m(T)-\tilde m(T))\,dx
 \ds + \int_0^T\int_{\Omega}( F(t,x,m)    - F(t,x,\tilde m))   (m-\tilde m) \, dxdt\\
& \ds + \int_0^T\int_{\Omega}  \tilde m \left[   H(t,x,m,Du) - H(t,x,\tilde m,D\tilde u)- H_p(t,x,  \tilde m, D  \tilde u)\cdot (Du-D\tilde u)  \right]
  dxdt\\
& \ds + \int_0^T\int_{\Omega}    m \left[   H(t,x,\tilde m,D\tilde u) - H(t,x, m,Du)- H_p(t,x,   m, D u)\cdot (D\tilde u-D u)  \right]
  dxdt.
\end{split}
\end{equation}
Let us define
\begin{displaymath}
  \begin{split}
 E(t,x, m_1,p_1,m_2, p_2) = &- (H( t,x, m_1,p_1) - H(t,x, m_2,p_2) ) (m_1-m_2)  \\ &+ \left( m_1  H_p( t,x, m_1,p_1) - m_2  H_p(t, x, m_2,p_2)\right) (p_1-p_2)\\ & + ( F(t,x,m_1)    - F(t,x, m_2))   (m_1- m_2)
\end{split}
\end{displaymath}
and  $  r= p_2-p_1$, $p_s= p_1+sr$,  $z=m_2-m_1$, $m_s= m_1+sz$.
From the assumptions, the function $h: s\mapsto -z H(t,x,m_s, p_s) + m_sH_{p}(t,x,m_s, p_s) \cdot r  
+ zF(t,x,m_s)$
is increasing on $[0,1]$; this yields that $ E(t,x,m_1,p_1,m_2, p_2) \ge 0$ for all $m_1, m_2 \ge 0$
and all $p_1,p_2\in \R^N$  and that $ E(t,x,m_1,p_1,m_2, p_2) = 0$ if and only if $m_1=m_2$ and
 $p_1=p_2$ . Thus, (\ref{eq:42}) and the assumptions imply
\begin{equation}\label{eq:43}
\begin{split}
\ds \int_{\Omega} (  G(x,m(T))-G(x,\tilde m(T)))\,   (m(T)-\tilde m(T))\,dx&=0,\\
 \ds \int_0^T\int_{\Omega}  E(t,x,m,Du,\tilde m, D \tilde u) \, dxdt&=0.
\end{split}
\end{equation}
From (\ref{eq:43}), we deduce that $m=\tilde m$ and $Du=D\tilde u$ a.e. in $Q_T$,
and that $G(\cdot, m(T,\cdot))=G(\cdot,\tilde m (T, \cdot))$ a.e. in $\Omega$. We then deduce that, in  a weak sense,  $(u-\tilde u)_t=0$ and $u(T)=\tilde u(T)$, hence $u=\tilde u$ a.e. in $Q_T$.
\end{proof}


\section{Existence and uniqueness for singular congestion}
\label{sec:exist-uniq-sing}
We consider the limit case when $\mu=0$; we aim at proving the existence and uniqueness of a weak solution of (\ref{MFGu})-(\ref{MFGbc})
as defined in Definition~\ref{defsing}.
Recall that it is not restrictive to  assume (\ref{eq:41}).
To prove  existence, we consider the weak solutions $(u^\mu, m^\mu)$ of 
\begin{eqnarray}
  \label{MFGumu} -\partial_t u ^\mu - \nu \Delta u^\mu + H(t,x, \mu+m^\mu,Du^\mu)  = F(t,x,m^\mu)\,,
  & \quad (t,x)\in (0,T)\times \Omega
  \\   \label{MFGmmu}
  \partial_t m^\mu  -\nu \Delta m^\mu -{\rm div} (m^\mu H_p(t,x,\mu+m^\mu,Du^\mu))=0\,, &\quad  (t,x)\in (0,T)\times \Omega
  \\   \label{MFGbcmu}
  m^\mu(0,x)=m_0(x)\,,\,\,u^\mu (T,x)= G(x,m^\mu(T))\,, & \quad x\in \Omega\,  
\end{eqnarray}
 for $\mu>0$ tending to $0$. By \rife{eq:18}, \rife{eq:20}, we already know that the following estimates hold
\begin{eqnarray}
\label{eq:33}
\into G(x,m^\mu(T))m^\mu(T) dx +  \parint F(t,x,m^\mu)m^\mu\, dxdt  
 + \| (m^\mu)^{\frac\alpha{\beta-1}+1}\|_{\parelle{\frac{N+2}N}}
\leq C,
\\
\label{eq:36}
 \parint m^\mu \left\{H_p(t,x, \mu+ m^\mu,Du^\mu)\cdot Du^\mu- H(t,x, \mu+m^\mu,Du^\mu)\right\}\,dxdt \leq C,
\\\label{eq:34}
\parint \frac{|Du^\mu|^\beta}{(  m^\mu+\mu)^\alpha} \, dxdt +
\parint m^\mu\, \frac{|Du^\mu|^\beta}{ (m^\mu+\mu)^\alpha} \, dxdt   \leq C,
\end{eqnarray}
for some $C$ independent of $\mu$.
\vskip0.4em
With the same proof as for   Lemma~\ref{sec:non-sing-cong-2}, we obtain:
\begin{lemma}
\label{sec:exist-uniq-sing-3}
For each $0<\mu<1$, the following functions defined on $Q_T$,\\ 
$w^\mu=    \mathds{1}_{\{m^\mu +\mu\le 1\}}   m^\mu  \frac {|Du^\mu|^{\beta-1}}{(\mu+m^\mu)^\alpha}$ and  $z^\mu=    \mathds{1}_{\{m^\mu +\mu> 1\}}   \sqrt{m^\mu}  \frac {|Du^\mu|^{\beta-1}}{(\mu+m^\mu)^\alpha} $,   are such that 
\[\left|m^\mu H_p(t,x,\mu+m^\mu, Du^\mu)\right| \le c_2 (m^\mu + w^\mu + \sqrt {m ^\mu } z^\mu),\] 
and the family $(w^\mu)$ is bounded in $L^{\beta '} (Q_T)$, the family $z^\mu$ is bounded in $\parelle2$ and also relatively compact if $\beta<2$.
\end{lemma}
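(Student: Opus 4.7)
The plan is to mimic the proof of Lemma~\ref{sec:non-sing-cong-2}, replacing the truncated density $T_{1/\epsilon}m^\epsilon$ by $m^\mu$ itself and keeping $\mu$ as the small regularization parameter. The key point is that the a priori estimates \rife{eq:33}--\rife{eq:34} are uniform in $\mu$, so no extra truncation is needed: one simply reads off the definitions of $w^\mu$ and $z^\mu$ from the splitting in the proof.

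First, I would apply \rife{grow} to obtain
\[
\left|m^\mu H_p(t,x,\mu+m^\mu,Du^\mu)\right| \le c_2\,m^\mu + c_2\,m^\mu \frac{|Du^\mu|^{\beta-1}}{(\mu+m^\mu)^\alpha},
\]
and split the second term using $\mathds{1}_{\{m^\mu+\mu\le 1\}}+\mathds{1}_{\{m^\mu+\mu>1\}}=1$. The piece corresponding to the first indicator is exactly $w^\mu$; the piece corresponding to the second indicator equals $\sqrt{m^\mu}\,z^\mu$ by the very definition of $z^\mu$. This yields the pointwise bound of the statement.

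Next, for the $L^{\beta'}$ bound on $w^\mu$, I would use that on $\{m^\mu+\mu\le 1\}$ one has $m^\mu \le m^\mu+\mu$, hence
\[
(w^\mu)^{\beta'} \le \mathds{1}_{\{m^\mu+\mu\le 1\}}\,\frac{|Du^\mu|^{\beta}}{(\mu+m^\mu)^{(\alpha-1)\beta'}}.
\]
Since \rife{eq:9} gives $(\alpha-1)\beta'\le \alpha$, and on this set $(\mu+m^\mu)\le 1$ raised to a smaller exponent is larger, we conclude $(w^\mu)^{\beta'}\le |Du^\mu|^\beta/(\mu+m^\mu)^\alpha$, which is bounded in $L^1(Q_T)$ by \rife{eq:34}.

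For $z^\mu$, Hölder's inequality as in \rife{zeps2} yields, for any $E\subseteq Q_T$,
\[
\iint_E (z^\mu)^2 \le \Bigl(\iint_E m^\mu\,\tfrac{|Du^\mu|^\beta}{(\mu+m^\mu)^\alpha}\Bigr)^{2/\beta'}\Bigl(\iint_E \mathds{1}_{\{m^\mu+\mu>1\}}\,\tfrac{m^\mu}{(\mu+m^\mu)^{\alpha(1+\beta'/(\beta'-2))}}\Bigr)^{1-2/\beta'},
\]
and on $\{m^\mu+\mu>1\}$ the denominator in the second factor is $\ge 1$, so it is controlled by $\iint_E m^\mu$. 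Combined with \rife{eq:34} and the conservation of mass for $m^\mu$, this gives the $L^2$ bound. Finally, when $\beta<2$, to upgrade to equi-integrability I would use the uniform bound $\sup_t \int_\Omega m^\mu(m^\mu)^{\alpha/(\beta-1)}\,dx \le C$ (which is the analog of \rife{eq:7} with $\mu$ replacing $1/\epsilon$) to estimate $\iint_E m^\mu \le k|E| + C k^{-\alpha/(\beta-1)}$ for every $k>0$, and hence $\iint_E m^\mu \le C|E|^{\alpha/(\alpha+\beta-1)}$; plugging this into the Hölder bound and using $1-2/\beta'>0$ delivers equi-integrability. There is no substantive obstacle: the only delicate point is the degeneration of the exponent $1-2/\beta'$ at $\beta=2$, which is precisely why the equi-integrability is asserted only for $\beta<2$.
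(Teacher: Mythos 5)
Your proposal is correct and is essentially the paper's own argument: the paper proves this lemma by simply invoking "the same proof as for Lemma~\ref{sec:non-sing-cong-2}", which is exactly what you carry out, with the truncation $T_{1/\epsilon}m^\epsilon$ replaced by $m^\mu$ and the uniform-in-$\mu$ estimates (\ref{eq:33})--(\ref{eq:34}) in place of (\ref{eq:11})--(\ref{est-mDu}). The only cosmetic remark is that the bound $\iint_E m^\mu\,dxdt\le k|E|+Ck^{-\alpha/(\beta-1)}$ can be read off directly from the space--time estimate on $(m^\mu)^{1+\alpha/(\beta-1)}$ in (\ref{eq:33}), so you need not invoke an unproved sup-in-time analogue of (\ref{eq:7}) for the limit $m^\mu$.
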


\begin{remark}
  \label{sec:exist-uniq-sing-5}
In fact, the estimate of Lemma~\ref{sec:exist-uniq-sing-3} shows that $m^\mu H_p(t,x,\mu+m^\mu, Du^\mu)$ is actually bounded in $L^{1+\epsilon}(Q_T)$ for some $\epsilon>0$. This is obvious for $m^\mu$ from estimate \rife{eq:33} and so for $w^\mu$, which is bounded   in $\parelle{\beta'}$. 
As for the term  $\sqrt {m ^\mu } z^\mu$, we see that, for $0<\epsilon<1$
$$
\parint (\sqrt {m ^\mu } z^\mu)^{1+\epsilon} dxdt  \leq \left(\parint (z^\mu)^{2} dxdt\right)^{\frac{1+\epsilon}2} \left(\parint (m^\mu)^{\frac{1+\epsilon}{1-\epsilon}} dxdt\right)^{\frac{1-\epsilon}2} 
$$
so using the bound of $z^\mu$ in $\parelle2$ and the estimate \rife{eq:33} for $m^\mu$, the right-hand side is bounded as soon as $\epsilon$ is sufficiently small.
\end{remark}

\begin{lemma}
  \label{sec:exist-uniq-sing-4}
 \begin{enumerate}
   \item    The family $(m^\mu)$ is relatively compact in $L^1(Q_T)$  and in $C( [0,T]; W^{-1, r}(\Omega) )$ for some  $r>1$
   
  \item There exist $m\in L^1(Q_T)$ and $u \in L^q(0,T; W^{1,q}(\Omega))$ for any $q<\frac{N+2}{N+1}$ such that, after the extraction of a subsequence (not relabeled), $m^\mu\to m$ in $ L^1(Q_T)$ and almost everywhere, $u^\mu \to u$ and $Du^\mu\to Du$ in $\elle1$ and almost everywhere. Moreover,
 \begin{eqnarray}
\label{eq:39}
  \parint m^{-\alpha} |Du|^\beta   \mathds{1}_{\{m>0\}}\, dxdt +
\parint m^{1-\alpha}\, |Du|^\beta \mathds{1}_{\{m>0\}} \, dxdt   < +\infty,\\
\label{eq:40}
\parint F(x,m)m\,dxdt+ 
\| m^{\frac\alpha{\beta-1}+1}\|_{\parelle{\frac{N+2}N}}<\infty,
\end{eqnarray}
and $Du=0$ a.e. in $\{m=0\}$.

\item  If $\beta\le 2$ and 
  $\alpha< 1$, then  $m^\mu H_p(t,x,\mu +m^\mu, Du^\mu) \to   m H_p(t,x,m, Du) $ in $L^1(Q_T)$.
  If $\beta<2 $ and $1\le \alpha\le   \frac {4(\beta-1)}\beta$ or  $\beta=2 $ and $1\le \alpha<2$,
 then $m^\mu H_p(t,x,\mu +m^\mu, Du^\mu) \rightharpoonup  m H_p(t,x,m, Du) \mathds{1}_{\{m>0\}}$ weakly in $L^1(Q_T)$.
 
 \item $m\in \continue 1$ and $m^\mu(t)$ converges to $m(t)$ weakly in $\elle1$ for any $t\in [0,T]$.  If $\beta<2$, then $m^\mu\to m$ in $C([0,T]; L^1(\Omega))$. Finally, (\ref{eq:37}) holds for any $\varphi\in C^\infty_c([0,T)\times \Omega)$.
\end{enumerate}
\end{lemma}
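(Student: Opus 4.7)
The proof mirrors that of Proposition~\ref{sec:non-sing-cong-5}, with the parameter $\mu$ playing the role of the truncation $1/\epsilon$, but must accommodate the loss of the positive lower bound $\mu$ in the denominators. I proceed in four steps.

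\textbf{Compactness of $m^\mu$ and $u^\mu$.} Lemma~\ref{sec:exist-uniq-sing-3} supplies the decomposition required to apply \cite[Theorem 6.1]{Parma} to the Fokker--Planck equation~\rife{MFGmmu}, yielding relative compactness of $(m^\mu)$ in $\parelle1$ and in $C([0,T];W^{-1,r}(\Omega))$ for some $r>1$, which is part~(1). For $u^\mu$: using \rife{coerc}, \rife{eq:33}--\rife{eq:34} and \rife{F1}--\rife{G2} one checks that the right-hand side $F(t,x,m^\mu)-H(t,x,\mu+m^\mu,Du^\mu)$ of~\rife{MFGumu} and the terminal datum $G(x,m^\mu(T))$ are bounded in $\parelle1$ and $\elle1$ respectively; the standard theory of parabolic equations with $L^1$ data \cite{BDGO,BG} then produces a limit $u\in\limitate1\cap L^q(0,T;W^{1,q}(\Omega))$ for every $q<\frac{N+2}{N+1}$, together with a.e.\ convergences $u^\mu\to u$ and $Du^\mu\to Du$.

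\textbf{Bounds and the identity $Du=0$ on $\{m=0\}$.} The estimates \rife{eq:39}--\rife{eq:40} follow by Fatou's lemma from the uniform bounds \rife{eq:33}--\rife{eq:34}. The property $Du=0$ a.e.\ on $\{m=0\}$ is obtained by Fatou applied to $\parint |Du^\mu|^\beta(\mu+m^\mu)^{-\alpha}\,dxdt\leq C$: at any point where $m(t,x)=0$ we have $\mu+m^\mu(t,x)\to 0$, so the pointwise $\liminf$ of the integrand equals $+\infty$ unless $Du(t,x)=0$, and finiteness of the integral of the $\liminf$ forces $Du=0$ on $\{m=0\}$ up to a null set.

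\textbf{Convergence of $m^\mu H_p$.} By Remark~\ref{sec:exist-uniq-sing-5}, the sequence $m^\mu H_p(t,x,\mu+m^\mu,Du^\mu)$ is bounded in $L^{1+\epsilon}(Q_T)$, hence equi-integrable. On $\{m>0\}$ the a.e.\ convergences give pointwise convergence to $mH_p(t,x,m,Du)$. When $\alpha<1$ we moreover have $m^\mu/(\mu+m^\mu)^\alpha \le (\mu+m^\mu)^{1-\alpha}\to 0$ on $\{m=0\}$, which combined with $Du^\mu\to 0$ a.e.\ there yields pointwise convergence to $0$ on $\{m=0\}$; Vitali's theorem then provides strong $\parelle1$ convergence. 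When $\alpha\ge 1$, Dunford--Pettis yields weak $\parelle1$ convergence to some limit, and Egorov applied on $\{m>0\}$ identifies this limit with $mH_p(t,x,m,Du)$ there; the vanishing of the weak limit on $\{m=0\}$, which I expect to be the main technical obstacle, is to be established by splitting $\{m=0\}=\{m=0,m^\mu>\delta\}\cup\{m=0,m^\mu\le\delta\}$, where the first piece has measure tending to zero by a.e.\ convergence (so the $L^{1+\epsilon}$ bound and Hölder's inequality give a vanishing contribution) and the second is controlled via the decomposition of Lemma~\ref{sec:exist-uniq-sing-3} using the $L^{\beta'}$-boundedness of $w^\mu$ and, when $\beta<2$, the strong $\parelle2$-compactness of $z^\mu$ (the borderline case $\beta=2,\alpha<2$ relying on an absorption argument as in Lemma~\ref{compare-s}).

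\textbf{Continuity in time and the Kolmogorov equation.} The $L\log L$ bound on $m^\mu(t)$ uniform in $t$ (as in \cite{Parma}) makes $m^\mu(t)$ equi-integrable in $\elle1$; combined with the $C([0,T];W^{-1,r})$ convergence of part~(1) this yields weak $\elle1$ convergence of $m^\mu(t)$ at every $t\in[0,T]$. When $\beta<2$, the strong $\parelle2$-compactness of $z^\mu$ from Lemma~\ref{sec:exist-uniq-sing-3} permits a direct application of \cite[Theorem 6.1]{Parma} to upgrade the convergence to $C([0,T];\elle1)$. Finally, passing to the limit in the distributional form of~\rife{MFGmmu} using the convergence of $m^\mu H_p$ from the previous step produces~\rife{eq:37} for every $\varphi\in C^\infty_c([0,T)\times\Omega)$, completing parts~(3) and~(4).
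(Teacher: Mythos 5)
Your treatment of parts (1), (2) and (4), and of the case $\alpha<1$ in part (3), matches the paper's; the genuine divergence is in showing that the weak $L^1$ limit $\xi$ of $\xi^\mu:=m^\mu H_p(t,x,\mu+m^\mu,Du^\mu)$ vanishes on $\{m=0\}$ when $\alpha\ge 1$. The paper does this by testing against $\phi(m^\mu)$ for smooth bounded $\phi$: weak lower semicontinuity of the $L^1$ norm combined with the H\"older estimate $\parint|\xi^\mu\phi(m^\mu)|\le c_2\parint m^\mu|\phi(m^\mu)|+C\bigl(\parint(m^\mu)^{\beta-\alpha}|\phi(m^\mu)|\bigr)^{1/\beta}$ (from \rife{grow} and \rife{eq:34}), followed by the choice $\phi(m)=\exp(-Km)$ with $K\to+\infty$. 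Your splitting $\{m=0\}=\{m=0,\,m^\mu>\delta\}\cup\{m=0,\,m^\mu\le\delta\}$ is a viable alternative, and the first piece is handled correctly (its measure vanishes as $\mu\to0$ for fixed $\delta$, and the $L^{1+\epsilon}$ bound gives equi-integrability). On the second piece, however, your justification does not close as stated: the bare $L^{\beta'}$-boundedness of $w^\mu$ cannot give a small contribution there, because $|\{m=0,\,m^\mu\le\delta\}|$ need not be small; and neither the compactness of $z^\mu$ nor an absorption argument is relevant, since for $\delta,\mu<1/2$ the set $\{m^\mu\le\delta\}$ is disjoint from $\{m^\mu+\mu>1\}$, which supports $z^\mu$, so that term contributes nothing (in any case $\iint_{E}\sqrt{m^\mu}\,z^\mu\le\sqrt\delta\,\|z^\mu\|_{L^2}\,|Q_T|^{1/2}$ needs only the $L^2$ bound). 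What is actually needed on $E_\delta:=\{m=0,\,m^\mu\le\delta\}$ is the localized version of the paper's H\"older computation,
\begin{displaymath}
\iint_{E_\delta}w^\mu\,dxdt\;\le\;\left(\iint_{E_\delta}\frac{(m^\mu)^\beta}{(\mu+m^\mu)^\alpha}\,dxdt\right)^{\frac1\beta}\left(\parint\frac{|Du^\mu|^\beta}{(\mu+m^\mu)^\alpha}\,dxdt\right)^{\frac1{\beta'}}\;\le\;C\,\delta^{\frac{\beta-\alpha}\beta}\,|Q_T|^{\frac1\beta},
\end{displaymath}
and this is exactly where the condition $\alpha<\beta$ (hence the restriction $\alpha<2$ when $\beta=2$) enters, just as in the paper's $\exp(-Km)$ step. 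With that correction your route is sound and, in substance, equivalent to the paper's; the two approaches differ only in whether the cutoff is applied through a test function $\phi(m^\mu)$ or through a decomposition of the domain.
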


\begin{proof}
The compactness of $m^\mu, u^\mu$ and $Du^\mu$ are established exactly as in Proposition \ref{sec:non-sing-cong-5}. 
Then we obtain  (\ref{eq:39}) by using (\ref{eq:34}) and Fatou's Lemma in the set $\{m>0\}$. We obtain (\ref{eq:40}) as an easy  consequence of (\ref{eq:33}). If $Du$ did not vanish a.e. in $\{m=0\}$, there would exist $\rho>0$ and a measurable subset $E$ of $\{m=0\}$  in which $|Du|>\rho$. Then, by Fatou's lemma, $\int_E(m^\mu +\mu)^{-\alpha} |Du^\mu|^\beta dxdt $ would tend to $+\infty$, in contradiction with (\ref{eq:34}).  Therefore $Du=0$ a.e. in $\{m=0\}$. 
\\
To pass to the limit in $m^\mu H_p(t,x,\mu +m^\mu, Du^\mu) $, we first observe that this sequence  is equi-integrable in $\parelle1$ as a consequence of Remark \ref{sec:exist-uniq-sing-5}. Let us now consider the case when $\alpha<1$:  from (\ref{grow}) with $\mu=0$, 
 $(t,x,m, p)\mapsto m H_p (t,x,m,p)$ is well defined in $\overline Q_T\times [0,+\infty)\times \R^N$, and 
$m^\mu H_p(t,x,\mu+m^\mu, Du^\mu)\to m H_p(t,x,m, Du)$ almost everywhere.  Then by Vitali's theorem,
 $m^\mu H_p(t,x,\mu +m^\mu, Du^\mu) \to   m H_p(t,x,m, Du) $ in $L^1(Q_T)$.
\\
 If $\beta<2 $ and $1\le \alpha\le   \frac {4(\beta-1)}\beta$ or  $\beta=2 $ and $1\le \alpha<2$, 
then naming $\xi^\mu= m^\mu H_p(t,x,\mu+m^\mu, Du^\mu)$ for brevity,  we proceed   through the following steps:
\begin{enumerate}
\item By Remark \ref{sec:exist-uniq-sing-5}, we can extract a subsequence, not relabeled, such that $\xi^\mu\rightharpoonup \xi$ in 
$L^{1+\epsilon} (Q_T)$ weak.
\item  For any bounded and smooth function $\phi$, Lebesgue theorem implies that  $\phi(m^\mu)\to \phi(m)$ in $L^p(Q_T)$ for any $p\ge 1$. Therefore, $\xi^\mu   \phi(m^\mu)    \rightharpoonup \xi \phi(m)$ in $L^{1} (Q_T)$ weak.
\item If $\phi$ is also supported in $[\delta,+\infty)$ for some positive $\delta$, then using the almost everywhere convergence and Vitali's theorem,   $\xi^\mu   \phi(m^\mu)  \to  m H_p(t,x,m, Du)  \phi(m)$ in $L^1(Q_T)$. 
 This implies that $(\xi - m H_p(t,x,m, Du)) \phi(m)=0$, and that $\xi$ coincides with $m H_p(t,x,m, Du)$ at almost every $(t,x)$ such that $m(t,x)>0$.
\item We notice  that, by definition of $\xi^\mu$ and assumption \rife{grow},  
\[
  \begin{split}
\ds \parint |\xi^\mu \phi(m^\mu) |& dxdt  \le \ds  c_2  \parint    m^\mu| \phi(m^\mu) | dxdt 
\\ &\ds   +  c_2  \left(\parint     (m^\mu) ^{\beta- \alpha} | \phi(m^\mu) | dxdt \right) ^{\frac  1 \beta}
\left(\parint     \frac {|Du^\mu| ^\beta}  { (\mu+ m^\mu) ^{\alpha}}  | \phi(m^\mu) |    dxdt \right) ^{\frac  1 {\beta'}}  
  \end{split}
\]
and  (\ref{eq:34}) implies  that
\begin{displaymath}
\ds \parint |\xi^\mu \phi(m^\mu) | dxdt  \le  \ds  c_2  \parint    m^\mu| \phi(m^\mu) | dxdt +C  \left(\parint     (m^\mu) ^{\beta- \alpha} | \phi(m^\mu) | dxdt \right) ^{\frac  1 \beta}.    
\end{displaymath}
Then using the weak convergence of  $\xi^\mu \phi(m^\mu) $ in $\parelle1$ and the fact that $0<\beta-\alpha\le 1$, we can deduce
\begin{displaymath}
  \begin{split}
   \parint |\xi \phi(m) | dxdt  \le &\ds \liminf_{\mu\to 0} \left( c_2  \parint    m^\mu| \phi(m^\mu) | dxdt +C  \left(\parint     (m^\mu) ^{\beta- \alpha} | \phi(m^\mu) | dxdt \right) ^{\frac  1 \beta}    \right)\\
=&     c_2  \parint    m| \phi(m) | dxdt +C  \left(\parint     m ^{\beta- \alpha} | \phi(m) | dxdt \right) ^{\frac  1 \beta}    .
  \end{split}
\end{displaymath}
Taking now $\phi(m)= \exp(-Km)$, and letting $K$ tend to $+\infty$ yields that $\int_{\{m=0\}} |\xi|dx dt =0$, and that $\xi=0$ a.e. in $\{m=0\}$. Notice that we used $\alpha<\beta$ in this step, in particular if $\beta=2$ we needed the restriction $\alpha<2$.
\item Collecting the above results, we obtain that $\xi=m H_p(t,x,m, Du) \mathds{1}_{\{m>0\}}$.
\end{enumerate}
Finally, the weak convergence of $m^\mu(t)$ in $\parelle1$ can be justified as in Proposition   \ref{sec:non-sing-cong-5} (as well as the strong $\continue1$ convergence if $\beta<2$), and obtaining equation (\ref{eq:37}) is a consequence of the previous points.
 \end{proof}
 
 We conclude the existence part by showing that $u$ satisfies (\ref{eq:38}) and the energy equality holds true.
 
  \begin{prop}
    \label{sec:exist-uniq-sing-6}
Assume that  $\beta<2 $ and $0< \alpha\le   \frac {4(\beta-1)}\beta$ or  $\beta=2 $ and $0<\alpha<2$.
Let $(u,m)$ be given by  Lemma   \ref{sec:exist-uniq-sing-4}. Then  inequality (\ref{eq:38}) holds true for every nonnegative $\vfi\in C^\infty_c((0,T]\times \Omega)$. \\
Moreover, $(u,m)$ satisfies the energy identity (\ref{en-id}).
  \end{prop}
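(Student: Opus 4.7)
My plan is to pass to the limit as $\mu\to 0$ in the approximating system (\ref{MFGumu})--(\ref{MFGbcmu}), and then to close the energy loop using Lemma \ref{compare-s} together with the monotonicity of $G$, imitating the argument of Lemma \ref{sec:exist-uniq-non-2}. I test equation (\ref{MFGumu}) against a nonnegative $\vfi\in C^\infty_c((0,T]\times\Omega)$. The linear terms pass to the limit thanks to the $\parelle1$-convergence of $u^\mu$ from Lemma \ref{sec:exist-uniq-sing-4}, and $F(t,x,m^\mu)\to F(t,x,m)$ strongly in $\parelle1$ by equi-integrability (via (\ref{F1}) and (\ref{eq:33})). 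For the Hamiltonian I exploit the normalization (\ref{eq:41}) to ensure $H\ge 0$ and apply Fatou's lemma: on $\{m>0\}$, continuity gives $H(t,x,\mu+m^\mu, Du^\mu)\to H(t,x,m,Du)$ a.e., while on $\{m=0\}$, where $Du=0$ a.e. by Lemma \ref{sec:exist-uniq-sing-4}(2), nonnegativity forces $\liminf_\mu H(t,x,\mu+m^\mu, Du^\mu)\ge H(t,x,m,Du)\mathds{1}_{\{m>0\}}$ a.e. The equi-integrability of $G(x,m^\mu(T))$ (from (\ref{G2}) and (\ref{eq:33})) then produces a Young measure $\{\nu_x\}$ such that $G(x,m^\mu(T))\rightharpoonup \int_\R G(x,\lambda)\,d\nu_x(\lambda)$ weakly in $\elle1$. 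Collecting these facts yields the relaxed version of (\ref{eq:38}) in which $G(x,m(T))$ is replaced by $\int_\R G(x,\lambda)\,d\nu_x(\lambda)$.

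Next I pass to the limit in the energy identity at level $\mu$, guaranteed by Theorem \ref{sec:exist-uniq-non-1} and Lemma \ref{enidweak}:
\begin{align*}
& \parint m^\mu\left[H_p(t,x,\mu+m^\mu,Du^\mu)\cdot Du^\mu - H(t,x,\mu+m^\mu,Du^\mu)\right]dxdt \\
& \qquad + \into m^\mu(T)G(x,m^\mu(T))\,dx + \parint m^\mu F(t,x,m^\mu)\,dxdt = \into u^\mu(0)\,m_0\,dx.
\end{align*}
The integrand $m^\mu[H_p\cdot Du^\mu - H]$ is nonnegative by (\ref{eq:2}) and $F(t,x,m^\mu)$ is bounded below by (\ref{F0}), so Fatou's lemma bounds the left-hand side from below by the corresponding limiting quantities (with the indicator $\mathds{1}_{\{m>0\}}$ attached to the Hamiltonian block); an adaptation of Lemma \ref{sec:non-sing-cong-8} yields $u^\mu(0)\rightharpoonup^* \chi$ with $\chi\le u(0)$ for the right-hand side. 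This gives
\begin{align*}
& \limsup_\mu \into m^\mu(T)G(x,m^\mu(T))\,dx + \parint m F(t,x,m)\,dxdt \\
& \quad + \parint m\left[H_p(t,x,m,Du)\cdot Du - H(t,x,m,Du)\right]\mathds{1}_{\{m>0\}}\,dxdt \le \langle u(0), m_0\rangle.
\end{align*}
Now Lemma \ref{compare-s}, applied with $(\tilde u, \tilde m) = (u,m)$ and the Young measure $\{\nu_x\}$ above, supplies the reverse inequality, whence
$$\limsup_\mu \into m^\mu(T)G(x,m^\mu(T))\,dx \le \into m(T)\int_\R G(x,\lambda)\,d\nu_x(\lambda)\,dx.$$

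Finally, I identify the Young measure using the monotonicity of $G$ exactly as in Lemma \ref{sec:exist-uniq-non-2}: with $T_k(r)=\min(r,k)$, I expand $\into[T_k(G(x,m^\mu(T)))-T_k(G(x,m(T)))][m^\mu(T)-m(T)]\,dx$, dominate the positive boundary mass by the limsup above, handle the $T_k$-terms through the Young-measure weak convergence, and let first $\mu\to 0$ and then $k\to\infty$ to deduce a.e. convergence $G(x,m^\mu(T))\to G(x,m(T))$ along a subsequence. Equi-integrability then upgrades this to strong $\elle1$-convergence, so $\int_\R G(x,\lambda)\,d\nu_x(\lambda)=G(x,m(T))$ a.e. Substituting this equality into the relaxed subsolution gives (\ref{eq:38}), and inserting it into the two-sided energy bound forces the equality (\ref{en-id}). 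The main obstacle will be precisely this closure argument through the boundary term at $t=T$: since when $\beta=2$ only weak $\elle1$-convergence of $m^\mu(T)$ is available, identifying $\nu_x$ requires combining the crossed estimate of Lemma \ref{compare-s} with the monotonicity of $G$; a secondary delicate point is the use of Fatou's lemma on the set $\{m=0\}$, which crucially depends on the normalization (\ref{eq:41}).
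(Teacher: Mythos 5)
Your proposal is correct and follows essentially the same route as the paper: pass to the limit in the Hamilton--Jacobi equation via Fatou's lemma (using the normalization (\ref{eq:41}) and the coercivity to control the Hamiltonian on $\{m=0\}$) and Young measures at $t=T$, pass to the limit in the energy identity with Fatou plus the adaptation of Lemma~\ref{sec:non-sing-cong-8}, close the loop with Lemma~\ref{compare-s} applied to $(\tilde u,\tilde m)=(u,m)$, and identify the Young measure through the monotonicity of $G$ exactly as in Lemma~\ref{sec:exist-uniq-non-2}. The only cosmetic difference is that you keep the full Hamiltonian in the Fatou step and argue the liminf on $\{m=0\}$ separately, whereas the paper inserts the indicator $\mathds{1}_{\{m>0\}}$ from the outset; both yield the same relaxed inequality (\ref{postfat}).
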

  \begin{proof}
  From (\ref{eq:17}) and (\ref{eq:41}), we deduce that  for every nonnegative $\vfi\in C^\infty_c((0,T]\times \Omega)$,
 \begin{equation}
 \label{prefat}
   \begin{split}
 \int_0^T \into u^\mu\, \vfi_t\, dxdt & - \nu \int_0^T \into u^\mu\,\Delta\vfi\, dxdt 
 + \int_0^T\into H(t,x,\mu+m^\mu,Du^\mu)  \vfi    \mathds{1}_{\{m>0\}}\, dxdt
 \\
 & \le \int_0^T \into F(t,x,m^\mu)\vfi\, dxdt + \into G(x,m^\mu(T))\vfi(T)\, dx.     
   \end{split}
 \end{equation}
 By using the a.e. convergence of $H(t,x,\mu+m^\mu,Du^\mu)     \mathds{1}_{\{m>0\}}$ to 
 $H(t,x,m,Du)     \mathds{1}_{\{m>0\}}$ and Fatou's lemma, 
 \begin{displaymath}
  \parint   H(t,x, m,Du) \varphi(t,x) \mathds{1}_{\{m>0\}} dxdt \le \liminf_{\mu\to 0} \parint   H(t,x, \mu+ m^\mu,Du^\mu)  \mathds{1}_{\{m>0\}} 
 \varphi(t,x) dxdt. 
 \end{displaymath}
By properties of Young measures, and since $G(x,m^\mu(T))$ is equi-integrable, 
$$
\lim_{\mu\to 0} \into G(x,m^\mu(T))\vfi(T)\, dx = \into \int_{\R} G(x,\lambda)d\nu_x(\lambda)\vfi(T)\, dx \,.
$$
We easily pass to the limit in the other terms  of \rife{prefat} since $u^\mu$ and $F(t,x,m^\mu)$ converge  in $\elle1$. So we end up with
\be
\label{postfat}
   \begin{split}
 \int_0^T \into u \, \vfi_t\, dxdt & - \nu \int_0^T \into u \,\Delta\vfi\, dxdt 
 + \int_0^T\into H(t,x, m ,Du )  \vfi    \mathds{1}_{\{m>0\}}\, dxdt
 \\
 & \le \int_0^T \into F(t,x,m )\vfi\, dxdt + \into \int_{\R} G(x,\lambda)d\nu_x(\lambda)\vfi(T)\, dx\,.     
   \end{split}
 \end{equation}
Thanks to \rife{postfat}, the conclusion of Lemma~\ref{sec:non-sing-cong-8} is also true. Therefore,  
$$
\lim\limits_{\mu\to 0} \into u^\mu(0)m_0\, dx \leq \langle u(0), m_0\rangle\,.
$$
Starting now from the energy identity for the solution of (\ref{MFGu})-(\ref{MFGbc}) with $\mu>0$:
\begin{align*}& 
 \parint m^\mu \left( H_p(t,x, \mu+m^\mu,Du^\mu)\cdot Du^\mu -H(t,x, \mu+m^\mu,Du^\mu)\right) \,dxdt
\\
&  +\into m^\mu(T) G (x,m^\mu(T)) dx+\parint m^\mu F (t,x,m^\mu)\, dxdt
\quad = \into u^\mu(0)m_0\, dx  
\end{align*}
using Fatou's lemma thanks to (\ref{eq:2}) and (\ref{F0}), we obtain
\begin{align*}
 \limsup_{\mu\to 0} \into  G (x,m^\mu(T))m^\mu(T) dx & \leq \langle u(0), m_0\rangle-\parint mF(t,x,m)\, dxdt
 \\ & -   \parint \mathds{1}_{\{m>0\}} m \left( H_p(t,x, m,Du)\cdot Du -H(t,x, m,Du)\right) \,dxdt.
\end{align*}
Reasoning exactly as in the previous section, from the estimate \rife{eq:33} and assumption \rife{G2} we obtain both $G(x,m(T))m(T)\in \elle1$ and $\int_{\R} G(x,\lambda)\lambda\,d\nu_x(\lambda)\in \elle1$. Therefore, we are allowed to use (\ref{ineq-s}) in Lemma~\ref{compare-s} with $\tilde m=m$ and $\tilde u=u$, and we get
\begin{align*}
 & \limsup_{\mu\to 0} \into  G(x,m^\mu(T))m^\mu(T) dx   \leq \langle u(0), m_0\rangle-\parint mF(t,x,m)\, dxdt
 \\ & \qquad \qquad -   \parint \mathds{1}_{\{m>0\}} m \left( H_p(t,x, m,Du)\cdot Du -H(t,x, m,Du)\right) \,dxdt 
 \\
&  \qquad \leq \into \int_{\R} G(x,\lambda)d\nu_x(\lambda)\,m(T) dx.
\end{align*}
Now we proceed as in Lemma \ref{sec:exist-uniq-non-2} using the monotonicity of $G$ and we conclude that $G(x,m^\mu(T))\to G(x,m(T))$ in $\elle1$ and the energy identity holds true. Coming back to \rife{postfat}, now we know that $\int_{\R} G(x,\lambda)d\nu_x(\lambda)= G(x,m(T))$ and so \rife{eq:38} is proven.
  \end{proof}
  
 We conclude with the uniqueness of the weak solution. To this purpose, since $u$ is now only a subsolution, we will need a stronger version of the energy equality for the uniqueness argument to succeed.

 \begin{lemma}\label{newlem} Let $(u,m)$ be a weak solution of (\ref{MFGu})-(\ref{MFGbc}). Then 
  $u m\in \parelle1$ and 
\be\label{idt}\begin{split}
 \into  m(t)\, u(t)\, dx & =  \into G(x,m(T))\, m(T)\,dx + \int_t^T \!\! \into F(s,x,m) m\, dxds
\\ & + \int_t^T \!\! \into \left[  m\, H_p(s,x, m, Du)\cdot Du- m \,H(s,x,m,Du)\right] \, \mathds{1}_{\{m>0\}}dxds
\end{split}
\ee
 for almost every $t\in (0,T)$.
 \end{lemma}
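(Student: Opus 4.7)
The plan is to derive \rife{idt} by bounding $\into m(t)u(t)\,dx$ from above and below at each time $t$, using two applications of (a variant of) Lemma~\ref{compare-s} on the subintervals $(t,T)$ and $(0,t)$, together with the energy identity \rife{en-id} already built into the definition of weak solution. The integrability $um\in \parelle1$ is obtained at the end: since $u\ge c_4$ and $m\in \continue1$, the negative part $u^-m$ already lies in $\parelle1$, while \rife{idt} will show that $t\mapsto \into m(t)u(t)\,dx$ is essentially bounded by a constant depending only on the data, whence $u^+m\in \parelle1$ after integration in $t$.

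For the upper bound $\into m(t)\,u(t^+)\,dx \le $ r.h.s.~of \rife{idt}, I rerun the proof of Lemma~\ref{compare-s} on the interval $(t,T)$, choosing $(\tilde u,\tilde m)$ equal to the restriction of $(u,m)$. The mollification $m(t)\star \rho_\de$ now plays the role of the (originally continuous) $\tilde m_0$, and the trace at $s=t$ of $s\mapsto \into u(s)\,(m(t)\star \rho_\de)\,dx$ provided by Lemma~\ref{sec:case-non-singular} converges, as $\de\to 0$, to the pairing $\langle m(t),u(t^+)\rangle$ at every $t$ which is a Lebesgue point of $m(\cdot)\in L^1(\Omega)$. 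The remaining steps (separately in the cases $\alpha\le 1$ and $\alpha>1$) go through verbatim, since they only rely on the integrability properties (ii) of Definition~\ref{defsing}, which are inherited on the sub-interval $(t,T)$.

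For the lower bound, I run the same argument on $(0,t)$ with $(\tilde u,\tilde m)=(u,m)$. The role previously played by the terminal contribution $\into \int_\R G(x,\lambda)\,d\nu_x(\lambda)\,\tilde m(T)\,dx$ is now taken by $\into u(t^-)\,m(t)\,dx$, which arises as the left-trace boundary term when the subsolution inequality \rife{eq:38} is tested against functions supported in $(0,t]\times \Omega$ and the pairing with $\tilde m(t)\star \rho_\de$ is passed to the limit. The outcome reads
\be
\langle m_0, u(0)\rangle \le \into u(t^-)\,m(t)\,dx + \int_0^t\!\!\into Fm\,dxds + \int_0^t\!\!\into m\bigl[H_p\cdot Du - H\bigr]\mathds{1}_{\{m>0\}}\,dxds.
\ee
Subtracting this from \rife{en-id} cancels the integrals over $(0,t)$ and yields the reverse bound $\into u(t^-)\,m(t)\,dx \ge $ r.h.s.~of \rife{idt}.

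Combining the two bounds gives \rife{idt} at every $t\in (0,T)$ at which the left and right traces $u(t^\pm)$ coincide, which holds outside an at most countable set by the $BV$ structure of $t\mapsto \into \xi\,u(t)\,dx$ (Lemma~\ref{sec:case-non-singular}). The main obstacle is making sense of the intermediate-time pairings $\langle m(t), u(t^\pm)\rangle$ when $u(t^\pm)$ is merely a Radon measure and $m(t)$ only $L^1$; this is handled via the mollification $m(t)\star \rho_\de$, the selection of $t$ as a Lebesgue point of $m(\cdot)$, and the equi-integrability of the nonlinear fluxes provided by (ii) of Definition~\ref{defsing}, all of which were already the core technical devices in Lemma~\ref{compare-s}.
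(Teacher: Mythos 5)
Your overall strategy coincides with the second half of the paper's proof: run the crossed-energy argument of Lemma~\ref{compare-s} with $(\tilde u,\tilde m)=(u,m)$ separately on $(0,t)$ and on $(t,T)$, and subtract the resulting inequalities from the energy identity \rife{en-id} to pin down $\into m(t)u(t)\,dx$ from both sides. However, there is a genuine gap in how you make sense of, and pass to the limit in, the intermediate-time pairing. In Lemma~\ref{compare-s} the boundary term at $s=0$ is $\into u(s)\,(\tilde m_0\star\rho_\de)\,dx$ with $\tilde m_0$ \emph{continuous}, which is why the trace converges as $\de\to 0$ against the Radon measure $u(0)$. At an interior time $t$ you only have $m(t)\in L^1(\Omega)$, while $u(t^\pm)$ is a priori just a Radon measure (and $u$ is unbounded above); the pairing $\langle m(t)\star\rho_\de,\,u(t^\pm)\rangle$ is defined for each $\de$, but $m(t)\star\rho_\de\to m(t)$ only in $L^1(\Omega)$, not uniformly, so nothing forces these pairings to converge as $\de\to 0$. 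Choosing $t$ to be a Lebesgue point of $s\mapsto m(s)\in L^1(\Omega)$ is a statement about regularity in time and does not touch this spatial obstruction. Moreover, your plan to deduce $um\in\parelle1$ \emph{after} establishing \rife{idt} is circular: the left-hand side of \rife{idt} must already be a well-defined integrable quantity for the limit $\de\to 0$ to produce it.

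The missing ingredient is the truncation device the paper uses at the start of its proof: set $u_k=(u-k)^+$, observe that $u_k$ is a subsolution of a problem with source $F(t,x,m)\mathds{1}_{\{u>k\}}$ and terminal datum $(G(x,m(T))-k)^+$, and rerun Lemma~\ref{compare-s} to get
\begin{displaymath}
\into \md(t)\,u_k(t)\,dx \le \int_t^T\!\!\into \phi_\de\,\mathds{1}_{\{u>k\}}\,dxds + \into \chi_\de\,\mathds{1}_{\{G(x,m(T))>k\}}\,dx
\end{displaymath}
with $(\phi_\de)$ and $(\chi_\de)$ convergent, hence equi-integrable, in $\parelle1$ and $\elle1$. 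This yields $\sup_\de\into\md(t)u_k(t)\,dx\to 0$ as $k\to\infty$, i.e.\ a control on the large-$u$ region that is \emph{uniform in $\de$}; combined with the strong $L^1$ convergence of $\md(u-u_k)$ (where $u-u_k$ is bounded), it gives $\md\,u\to m\,u$ in $\parelle1$, which simultaneously proves $um\in\parelle1$ and legitimizes the limit $\de\to 0$ in the two sub-interval inequalities. Without this step your argument cannot close.
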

 
 \begin{proof}
Let us set $u_k:= (u-k)^+$. We first observe that, whenever $u$ is  a subsolution, then $u_k$ is also  a subsolution of a  similar problem, namely
\begin{align*}
\int_0^T \into u_k\, \vfi_t\, dxdt & - \nu \int_0^T \into u_k\,\Delta\vfi\, dxdt 
+ \int_0^T\into H(t,x,m,Du)\mathds{1}_{\{m>0\}}\,  \mathds{1}_{\{u>k\}}\,\vfi\, dxdt
\\
& \leq  \int_0^T \into F(t,x,m)\mathds{1}_{\{u>k\}}\,\vfi\, dxdt + \into   (G(x,m(T))-k)^+ \,\vfi(T)\, dx
\end{align*}
Henceforth, we can proceed exactly as in Lemma \ref{compare-s} with $\tilde m=m$ and we obtain (recall that $\md(t,x)= (m(t) \star \rho_\de)(x)$)
\begin{equation}\label{gkumd}
\begin{split} & 
\int_t^T \int _\Omega  [\mathds{1}_{\{m>0\}} H(s,x,m,Du)\md - Du \cdot w_\de ] \,\mathds{1}_{\{u>k\}}\, dxds + \into  \md(t)\, u_k(t) \,dx
\\
& \quad \leq   \int_t^T\into F(s,x,m)\md\, \mathds{1}_{\{u>k\}}\, dxds + \into    (G(x,m(T))-k)^+\,  \md(T) \,dx 
\end{split}
\end{equation}
which holds for almost every $t$ since $\md \in  \parelle\infty$ (it is indeed even continuous) and $u_k\in \parelle1$ and is bounded below. Starting from \rife{gkumd} and following the same steps as in Lemma \ref{compare-s},  we obtain that
$$
\into  \md(t)\, u_k(t) \,dx \leq   \int_t^T\into \phi_\de\, \mathds{1}_{\{u>k\}}\, dxds + \into    \chi_\de\, \mathds{1}_{\{G(x,m(T))>k\}} \,dx 
$$
for some sequences $\phi_\de$, $\chi_\de$ which are convergent, and therefore equi-integrable, in $\parelle1$ and in $\elle1$ respectively. This implies that
$$
\sup_\de \into  \md(t)\, u_k(t) \,dx \mathop{\to}^{k\to \infty} 0\,.
$$
Since $u-u_k$ is bounded, we know that $\md(u-u_k)$ strongly converges in $\parelle1$ as $\de \to 0$. Therefore, a  standard argument allows us to conclude that 
\be\label{uml1}
\md\, u\to m  u \qquad \hbox{in $\parelle1$.}
\ee
This means that we can repeat the argument of Lemma \ref{compare-s} integrating either in $(0,t)$ or in $(t,T)$ obtaining both
\begin{align*} & 
 \int_0^t\into [H(s,x,m,Du)\md - Du \cdot w_\de ] \mathds{1}_{\{m>0\}}\, dxds + \langle\md(0)\, u(0)\rangle \,dx
\\
& \quad \leq   \int_0^t \into F(s,x,m)\md\,   dxds + \into    u(t) \,  \md(t) \,dx 
\end{align*}
and
\begin{align*} & 
 \int_t^T\into [H(s,x,m,Du)\md - Du \cdot w_\de ]\mathds{1}_{\{m>0\}} \, dxds + \into  \md(t)\, u(t) \,dx
\\
& \quad \leq   \int_t^T\into F(s,x,m)\md \, dxds + \into    G(x,m(T)) \,  \md(T) \,dx 
\end{align*}
for almost every $t\in (0,T)$. Passing to the limit as $\de\to 0$ is now allowed thanks to \rife{uml1}, besides the arguments already given in Lemma \ref{compare-s}. Finally, as $\de \to 0$ we obtain  both
\begin{align*}
\langle m_0\,, u(0)\rangle \, dx & \leq  \into u(t)\, m(t)\,dx + \int_0^t \into  F(s,x,m) m\, dxds
\\ & + \int_0^t \into \left[ m\, H_p(s,x, m, D  u)\cdot Du-  m \,H(s,x,m,Du)\right]\, \mathds{1}_{\{m>0\}}dxds
\end{align*}
and
\begin{align*}
\into   m(t)\, u(t)\, dx & \leq  \into  G(x,m(T))\, m(T)\,dx + \int_t^T \into F(s,x,m) m\, dxds
\\ & + \int_t^T \into  \left[   m\, H_p(s,x,  m, D  u)\cdot Du-   m \,H(s,x,m,Du)\right]\, \mathds{1}_{\{m>0\}}dxds\,.
\end{align*}
On account of the energy equality \rife{en-id} which holds by definition of weak solution, we conclude that \rife{idt} holds true, for almost every $t\in (0,T)$.
\end{proof}

 \begin{lemma}
\label{sec:bf-first-draft}
Under all the assumptions made in Theorem~\ref{sec:case-sing-cong-1}, there is a unique weak solution of (\ref{MFGu})-(\ref{MFGbc}).
  \end{lemma}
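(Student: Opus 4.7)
The strategy parallels the uniqueness proof in the non-singular case (Lemma~\ref{sec:exist-uniq-non-3}), now combining the crossed energy inequality of Lemma~\ref{compare-s} with the refined energy identity of Lemma~\ref{newlem}. Given two weak solutions $(u,m)$ and $(\tu,\tm)$, the plan is to apply Lemma~\ref{compare-s} in both directions (taking $\nu_x=\delta_{m(T,x)}$ and $\tilde\nu_x=\delta_{\tm(T,x)}$, which is legitimate since $u$ and $\tu$ are subsolutions with those terminal data and the integrability condition~\rife{gll} is granted by (ii) of Definition~\ref{defsing}), so as to obtain two inequalities for $\langle m_0,u(0)\rangle$ and $\langle m_0,\tu(0)\rangle$. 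The definition~\rife{en-id} supplies the matching two equalities. Adding the inequalities and subtracting the equalities gives
\begin{displaymath}
\int_\Omega [G(x,m(T))-G(x,\tm(T))](m(T)-\tm(T))\,dx + \parint [F(t,x,m)-F(t,x,\tm)](m-\tm)\,dxdt + \mathcal E \le 0,
\end{displaymath}
where $\mathcal E$ collects all the remaining Hamiltonian contributions (two self-terms minus two cross-terms).

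The crucial step is then the pointwise analysis of $\mathcal E$ on the three nontrivial subsets of $Q_T$. On $\{m>0,\tm>0\}$, a direct rearrangement identical to the one performed in~\rife{eq:42}--\rife{eq:43} identifies the integrand of $\mathcal E+(F(t,x,m)-F(t,x,\tm))(m-\tm)$ with the increment $h(1)-h(0)$ of the function $h$ from~\rife{eq:44}, which is nonnegative and vanishes only when $(\tm,D\tu)=(m,Du)$. On the more delicate sets $\{m>0,\tm=0\}$ and $\{m=0,\tm>0\}$, both cross-terms in $\mathcal E$ are wiped out by the indicator $\mathds{1}_{\{m>0,\tm>0\}}$, while the surviving self-term is $m[H_p(t,x,m,Du)\cdot Du-H(t,x,m,Du)]$ (respectively with $\tm,D\tu$), which is nonnegative by~\rife{eq:2} and~\rife{eq:41}; at the same time the $F$-integrand reduces to $(F(t,x,m)-F(t,x,0))m$ and is strictly positive whenever $m>0$, thanks to hypothesis~\rife{eq:49}. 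Since $G$ is nondecreasing, the terminal integral is nonnegative too. The forced vanishing of all these nonnegative integrands first rules out $\{m>0,\tm=0\}$ and $\{m=0,\tm>0\}$ as sets of positive measure, so that $m=\tm$ a.e., and then gives $Du=D\tu$ on $\{m>0\}$ by strict monotonicity of $h$; on the residual set $\{m=0\}=\{\tm=0\}$, both gradients vanish by definition of weak solution, so finally $Du=D\tu$ a.e.\ on $Q_T$.

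To upgrade this to $u=\tu$, observe that $Du=D\tu$ a.e.\ on the torus forces $u(t,x)=\tu(t,x)+\phi(t)$ for some scalar function $\phi\in L^1(0,T)$. Inserting this into the a.e.-in-time identity~\rife{idt} of Lemma~\ref{newlem} for both solutions and subtracting, the right-hand sides coincide (because $m=\tm$ and $Du=D\tu$), yielding $\int_\Omega m(t,x)[u(t,x)-\tu(t,x)]\,dx=0$ for a.e.\ $t\in(0,T)$. Since $\int_\Omega m(t,x)\,dx=\int_\Omega m_0(x)\,dx=1$ by mass conservation, it follows that $\phi(t)=0$ a.e., hence $u=\tu$ a.e.\ in $Q_T$.

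The main obstacle I anticipate is precisely the analysis on the sets where exactly one of the two densities vanishes: the indicators built into the relaxed formulation produce a cross-term deficit that convexity of $H$ alone cannot absorb, so the strict monotonicity assumption~\rife{eq:49} on $F$ at $m=0$ plays a genuinely essential role there. The final identification of $u$ with $\tu$ relies in turn on the upgraded a.e.-in-time energy identity of Lemma~\ref{newlem}, since the mere subsolution formulation~\rife{eq:38} would by itself be compatible with adding a nontrivial time-dependent constant to $u$, and it is only through mass conservation that this freedom is pinned down.
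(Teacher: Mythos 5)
Your proposal is correct and follows essentially the same route as the paper's proof: the same four crossed/self (in)equalities from Lemma~\ref{compare-s} and the energy identity, the same sign analysis on $\{m>0,\tm>0\}$ via the function $h$ of \rife{eq:44} and on the sets where exactly one density vanishes via \rife{eq:2}, \rife{eq:41} and \rife{eq:49}, and the same final use of Lemma~\ref{newlem} with mass conservation to eliminate the time-dependent constant. No gaps.
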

 \begin{proof}
   Let $(u,m)$ and $(\tilde u, \tilde m)$ be two weak solutions of (\ref{MFGu})-(\ref{MFGbc}). From Lemma~\ref{compare-s} and Proposition \ref{sec:exist-uniq-sing-6}, we know that 
    \begin{displaymath}
      \begin{split}
        \into  m_0\, u(0)\, dx & \leq  \into G(x,m(T))\, \tilde m(T)\,dx + \parint F(t,x,m)\tilde m\, dxdt
\\ & + \parint \left[ \tilde m\, H_p(t,x,\tilde m, D\tilde u)\cdot Du- \tilde m \,H(t,x,m,Du)\right]\, \mathds{1}_{\{m>0\,,\, \tm>0\}}dxdt,
\\
        \into \ m_0\, \tilde u(0)\, dx & \leq  \into G(x,\tilde m(T))\,  m(T)\,dx + \parint F(t,x,\tilde m) m\, dxdt
\\ & + \parint \left[  m\, H_p(t,x, m, D u)\cdot D\tilde u-  m \,H(t,x,\tilde m,D\tilde u)\right]\, \mathds{1}_{\{m>0\,,\, \tm>0\}} dxdt,
\\
  \into  m_0\, u(0)\, dx & =  \into G(x,m(T))\, m(T)\,dx + \parint F(t,x,m) m\, dxdt
\\ & + \parint \left[  m\, H_p(t,x, m, Du)\cdot Du- m \,H(t,x,m,Du)\right] \, \mathds{1}_{\{m>0\}}dxdt,
\\
  \into  m_0\, \tilde u(0)\, dx & =  \into G(x,\tilde m(T))\, \tilde m(T)\,dx + \parint F(t,x,\tilde m)\tilde m\, dxdt
\\ & + \parint \left[ \tilde m\, H_p(t,x,\tilde m, D\tilde u)\cdot D\tilde u- m \,H(t,x,\tilde m,D\tilde u)\right]\, \mathds{1}_{\{\tm>0\}}dxdt,
 \end{split}
    \end{displaymath}
and therefore
\begin{equation}\label{eq:45}
\begin{split}
 0 \ge &\ds \int_{\Omega} (  G(x,m(T))-G(x,\tilde m(T)))\,   (m(T)-\tilde m(T))\,dx
 \ds + \int_0^T\int_{\Omega}( F(t,x,m)    - F(t,x,\tilde m))   (m-\tilde m) \, dxdt\\
& \ds + \int_0^T\int_{\Omega} \mathds{1}_{\{m>0\,,\, \tm>0\}}\tilde m \left[   H(t,x,m,Du) - H(t,x,\tilde m,D\tilde u)- H_p(t,x,  \tilde m, D  \tilde u)\cdot (Du-D\tilde u)  \right]
  dxdt\\
& \ds + \int_0^T\int_{\Omega}  \mathds{1}_{\{m>0\,,\, \tm>0\}} m \left[   H(t,x,\tilde m,D\tilde u) - H(t,x, m,Du)- H_p(t,x,   m, D u)\cdot (D\tilde u-D u)  \right]
  dxdt\\
& \ds + \int_0^T\int_{\Omega}  \mathds{1}_{\{m>0\,,\, \tm=0\}} m \left[   - H(t,x, m,Du)+ H_p(t,x,   m, D u)\cdot D u  \right]
  dxdt\\
& \ds + \int_0^T\int_{\Omega}  \mathds{1}_{\{\tm>0\,,\, m=0\}} \tm \left[   -H(t,x,\tilde m,D\tilde u) + H_p(t,x,   \tm, D \tilde u)\cdot D\tilde u \right]
  dxdt.
\end{split}
\end{equation}
Note that  from (\ref{eq:1}) and the convexity of $H(t,x,m,p)$ w.r.t $p$, 
\begin{eqnarray}
\label{eq:47}
\tilde m>0 \quad &\Rightarrow  \quad  - H(t,x,\tilde m,D\tilde u)+ H_p(t,x,  \tilde m, D  \tilde u)\cdot D\tilde u \ge - H(t,x,\tilde m,0) \ge 0,\\
\label{eq:48} m>0 \quad &\Rightarrow  \quad - H(t,x,m,D u)+ H_p(t,x,   m, D  u)\cdot D u\ge - H(t,x, m,0) \ge 0.
\end{eqnarray}
Therefore all integrals in \rife{eq:45} involve nonnegative functions, which then  must be zero almost everywhere. We deduce that 
\begin{equation}\label{eq:46}
\begin{split}
\ds  \int_{\Omega} (  G(m(T))-G(\tilde m(T)))\,   (m(T)-\tilde m(T))\,dx&=0,\\
 \ds  \int_0^T\int_{\Omega} \mathds{1}_{\{m>0\,,\, \tm>0\}}  E(t,x,m,Du,\tilde m, D\tilde u) \, dxdt=0,\\
  \ds  \int_0^T\int_{\Omega}  \mathds{1}_{\{m=0, \tilde m>0\}}  \tilde m 
 ( - H(t,x,\tilde m,D\tilde u)+ H_p(t,x,  \tilde m, D  \tilde u)\cdot D\tilde u  +F(t,x,\tilde m )-F(t,x,0)  ) dxdt
&=0,\\
 \ds  \int_0^T\int_{\Omega}  \mathds{1}_{\{\tilde m=0,m>0\}} m  ( - H(t,x,m,D u)+ H_p(t,x,   m, D  u)\cdot D u+F(t,x,m)-F(t,x,0 ))&=0,
\end{split}
\end{equation}
where $E$ has been defined in the proof of Lemma~\ref{sec:exist-uniq-non-3}. From the last two equations in (\ref{eq:46}), (\ref{eq:47})-(\ref{eq:48}) and (\ref{eq:49}), we deduce that the sets
 $\{\tilde m=0,m>0\}$ and $\{ m=0,\tilde m>0\}$ have measure $0$.
From the second equation in (\ref{eq:46}) and the assumptions made for uniqueness (which apply to  the function $E$), as in Lemma~\ref{sec:exist-uniq-non-3} we deduce that $m=\tilde m$ and $Du=D\tilde u$ a.e. in $\{m>0,\tilde m>0\}$.
Combining all the previous observations,  $m=\tilde m$ almost everywhere in $Q_T$. Using also that  that $Du =0$ a.e. in $\{m=0\}$ and $D\tilde u =0$ a.e. in $\{\tilde m=0\}$,
we deduce that $Du=D\tilde u$ a.e.  in $Q_T$. 
Therefore,  $u-\tilde u = \into (u(t)-\tilde u(t))dx$. By applying Lemma \ref{newlem} to $u$ and $\tilde u$, subtracting the two equalities on account of the above informations we finally get
$$
\into m(t) (u-\tilde u)(t)\, dx =0 \,.
$$
Since $u-\tilde u $ is only time-dependent, this  implies  $u=\tilde u$ a.e. in $Q_T$.   
  \end{proof}

\bigskip 
\noindent{\bf Acknowledgement.} \quad  
Most of this work was done during a visit of A. Porretta at the University Paris Diderot (Paris VII). A. Porretta wishes to thank this institution for the invitation and the very kind hospitality given on this occasion. The work was also  supported by the Indam  Gnampa project 2015 {\it Processi di diffusione degeneri o singolari legati al controllo di dinamiche stocastiche} and by  ANR projects ANR-12-MONU-0013 and ANR-16-CE40-0015-01.

\end{document}